\documentclass[12pt,a4paper]{article}
\usepackage{amsfonts}
\usepackage{amsmath}
\usepackage{amssymb}
\usepackage{fullpage}
\usepackage{amsthm}
\usepackage{graphicx}

\newtheorem{theorem}{Theorem}[section]
\newtheorem{proposition}[theorem]{Proposition}
\newtheorem{lemma}[theorem]{Lemma} 
\newtheorem{corollary}[theorem]{Corollary}
\theoremstyle{remark}
\newtheorem{remark}[theorem]{Remark}

\newtheorem{example}[theorem]{Example}
\theoremstyle{definition} 
\newtheorem{definition}[theorem]{Definition}

\newcommand{\zz}{{\mathbb Z}}

\newcommand{\qq}{{\mathbb Q}}
\newcommand{\hh}{{\mathbb H}}
\newcommand{\bg}{{\mathbf{g}}}

\newcommand{\calr}{{\mathcal{R}}}

\newcommand{\diag}{{\mathrm{Diag}}}
\newcommand{\be}{{\bf e}}
\newcommand{\bv}{{\bf v}}
\newcommand{\ff}{{\mathbb F}}
\newcommand{\pp}{{\mathbb P}}

\title{Groups of type $FP$ via graphical small cancellation} 
\author{Thomas M. Brown 
\and
Ian J. Leary}

\date{\today}

\begin{document} 

\maketitle

\begin{abstract} 
We construct an uncountable family of groups of type $FP$.  In contrast
to every previous construction of non-finitely presented groups of type 
$FP$ we do not use Morse theory on cubical complexes; instead we use 
Gromov's graphical small cancellation theory.  
\end{abstract}

\section{Introduction} 

The first examples of non-finitely presented groups of type~$FP$ were 
constructed in the 1990's by Bestvina and Brady, using Morse theory 
on CAT(0) cubical complexes~\cite{bb}.  Brady also used similar 
techniques to construct finitely presented subgroups of hyperbolic 
groups that are not themselves hyperbolic because they are not 
$FP_3$~\cite{brady}.  With the benefit of hindsight, examples due 
to Stallings and Bieri of groups that are $FP_n$ but that are not 
$FP_{n+1}$ can be viewed as special cases of the Bestvina--Brady 
construction \cite{sta}~and~\cite[pp.~37--40]{bie}.  
In~\cite{buxgon}, Bux and Gonzalez 
pointed out the close connection between the Bestvina--Brady 
construction and Brown's criterion for finiteness properties~\cite{brown}.  
The computations of finiteness properties that Brown made in~\cite{brown}
using his new criterion can also be rephrased in terms of Morse theory.  
Since then, Morse theory on polyhedral complexes has been a vital
tool in computing the finiteness properties of many families of 
groups---for some notable recent examples see~\cite{buildings,etc}.  
The Bestvina--Brady argument has 
been extended in a number of ways; in particular the second named
author has constructed continuously many isomorphism types of groups
of type $FP$~\cite{fpg}.  Nevertheless, it is remarkable that until 
now, every construction of a non-finitely presented group of type 
$FP$ has relied on the same Morse-theoretic techniques that were 
used by Bestvina--Brady.  

There are of course a number of ways to make `new groups of type $FP$
from old': notable examples include the Davis trick, which has been
used to produce non-finitely presented Poincar\'e duality
groups~\cite{davpd,davbook,fpg}, the method used by
Skipper--Witzel--Zaremsky to construct simple groups with given
homological finiteness properties~\cite{swz} and the proof that every
countable group embeds in a group of type $FP_2$~\cite{sgfp2}.
Nevertheless, these constructions require a pre-existing family of
groups of type $FP$, and so they rely ultimately on the Morse
theoretic techniques of Bestvina--Brady.

Here, we give a new construction for non-finitely presented groups of
type $FP$, that relies on Gromov's graphical small cancellation
theory~\cite[Sec.~2]{gromov} instead of the techniques used by
Bestvina--Brady.  Gromov introduced graphical small cancellation as a
method to embed certain families of graphs, in particular an expanding
family, as subgraphs of a Cayley graph.  Our main idea is to use
graphical small cancellation to construct families of surjective group
homomorphisms with acyclic kernels.  

Our method naturally constructs uncountable families of groups, but
we claim that it is as simple as the method of~\cite{bb}, even if
one only wants to establish the existence of some non-finitely
presented group of type~$FP$.  Apart from graphical small cancellation, 
we use only classical tools from combinatorial and homological
group theory.  Some of the families of groups that we construct are
isomorphic to families from~\cite{fpg}, but some are rather different.  

The remainder of this introduction consists of a sketch of the
construction and the statements of our main results.  For the
definitions and background results that we assume concerning
homological finiteness properties and graphical small cancellation see
Section~\ref{sec:background}.

The Bestvina--Brady construction takes as its input a finite
flag simplicial complex $L$, which should be acyclic but not contractible
in order to produce a group $BB_L$ that is $FP$ but not finitely presented. 
For constructing continuously many generalized Bestvina--Brady groups 
$G_L(S)$ as in~\cite{fpg}, one also requires $L$ to be aspherical.  
Our construction takes as input a finite CW-complex $K$ that we
call a \emph{spectacular complex}, together with an infinite set 
$Z$ of non-zero integers.  In the definition, we refer to the
1-~and~2-cells of the complex as `edges' and `polygons' respectively.  

\begin{definition}\label{defn:kdefn}
A \emph{spectacular complex} $K$ is a finite 2-dimensional CW-complex $K$ with 
the following properties.

\begin{enumerate} 
\item The $1$-skeleton $K^1$ of $K$ is a simplicial graph;

\item The attaching map for each polygon $P$ of $K$ is an embedding of 
  the boundary circle $\partial P$ into $K^1$; 

\item Every edge path between distinct vertices of $K^1$ of valence
  at least three has length at least 5; 
  
\item The girth $g$ of the graph $K^1$ is at least $13$; 
  
\item The perimeter $l_P$ of each polygon $P$ satisfies $l_P>2g$;  
  
\item The polygons of $K$ satisfy a $C'(1/6)$ condition: for 
each pair $P\neq Q$ of polygons of $K$, each component of the 
intersection $\partial P\cap \partial Q$ of their boundaries 
contains strictly fewer than $\min\{l_P/6,l_Q/6\}$ edges; 

\item $K$ is acyclic.  
\end{enumerate} 
\end{definition} 

We emphasize that a spectacular complex is required to be 
2-dimensional: a finite tree (with no polygons) is not spectacular.  
The word spectacular is intended to be an acronym: Simplicial 1-skeleton,
Polygons Embed, $C'(1/6)$, Two-dimensional, ACyclic, with lower bounds 
on maximal Unbranching paths, Lengths or perimeters of polygons And
Rotundity, where `rotundity' is used as a replacement for `girth'.  
The existence of a spectacular complex $K$ will be established in
Section~\ref{sec:complex}.  
For now we suppose that we are given a 2-complex $K$ as above and an
infinite set $Z\subseteq \zz-\{0\}$.  We consider $K$ and $Z$ to be
fixed throughout and they will usually be omitted from the notation.  

For each subset $S\subseteq Z$, we define a group $H(S)=H(K,Z,S)$
whose generators are the directed edges of $K$, where the two
orientations of the same edge are mutually inverse group elements.
The relators between these generators depend on $Z$ and $S$ as well as
$K$, in the following way.  For each $n\in Z-S$ and each polygon $P$
of $K$, the word spelt around the `degree $n$ subdivision of $\partial
P$' is a relator.  For each $n \in S$ and each simple cycle $C$ in the
graph $K^1$, the word spelt around the `degree $n$ subdivision of $C$'
is a relator.

The group $H(S)$ is better understood via its graphical presentation.
Recall that a graphical presentation arises from a labelled graph.
The group presented by a graphical presentation has generators the
edge labels and relators the words that represent the labellings of
cycles in the graph.  Each component of the labelled graph is called
a graphical relator.  For $H(S)$, the set of labels used is the directed
edges of $K$, and the graphical relators are certain subdivisions of
$K^1$ and of the boundaries of polygons $P$ of $K$, with 
the canonical choice of labelling.
In more detail, for each $n\in Z-S$ and each polygon $P$ of $K$, the
`degree $n$ subdivision of $\partial P$' is a graphical relator and
for each $n\in S$ the `degree $n$ subdivision of $K^1$' is a
graphical relator.
The degree $n$ subdivision of a labelled graph is defined in
Section~\ref{sec:background}, but see also the example in
Figure~\ref{fig:one}.  

For $P$ a polygon of $K$, we define $H_P(S)$ to be the subgroup of
$H(S)$ generated by the directed edges of $P$.

\begin{theorem}\label{thm:main} 
For each $S\subseteq Z$ the graphical presentation for $H(S)$ given 
above satisfies the graphical small cancellation condition $C'(1/6)$. 
For each $S\subsetneq T\subseteq Z$ the natural bijection between 
generating sets extends to a surjective group homomorphism 
$H(S)\rightarrow H(T)$, whose kernel $K_{S,T}$ is a non-trivial 
acyclic group.  For each polygon $P$ the intersection 
$K_{S,T}\cap H_P(S)$ is the trivial group
and so the natural map restricts to an isomorphism $H_P(S)\cong H_P(T)$.  
\end{theorem}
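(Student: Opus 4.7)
My plan is to treat the four assertions of the theorem in sequence, using the combinatorial properties of a spectacular complex $K$ for the small cancellation and intersection-triviality parts, and the acyclicity of $K$ as a CW-complex for the acyclicity of $K_{S,T}$; I anticipate the acyclicity claim to be the main obstacle. To verify $C'(1/6)$, I will enumerate pieces by the pair of graphical relators they lie in. Every graphical relator is either the degree-$n$ subdivision of a polygon boundary $\partial P$ (for $n\in Z-S$) or the degree-$n$ subdivision of a simple cycle of $K^1$ (for $n\in S$), so any piece pulls back to a labelled edge path in $K^1$. Case analysis will bound the length of such a path against the relevant relator lengths: the $C'(1/6)$ hypothesis on polygons handles polygon--polygon pieces, the girth bound $g\ge 13$ and the unbranching bound of length at least $5$ handle cycle--cycle and mixed pieces, and the perimeter bound $l_P>2g$ ensures that any piece is less than $1/6$ of the polygon relator length. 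This establishes $C'(1/6)$ uniformly in $S$.

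Next, I will construct the surjection and verify non-triviality of the kernel. Every defining relator of $H(S)$ is a consequence of the defining relators of $H(T)$: the only relators that differ are the degree-$n$ subdivisions of $\partial P$ for $n\in T-S$, but these are special cases of the degree-$n$ subdivisions of the simple cycle $\partial P$, which are $H(T)$-relators. Hence the identity on generators extends to a surjection $H(S)\to H(T)$. For non-triviality of $K_{S,T}$, I will exhibit a simple cycle $C$ of $K^1$ that is not the boundary of any polygon; such cycles must exist, for example as the symmetric difference of two polygon boundaries sharing a short arc (allowed by the polygon $C'(1/6)$ hypothesis). The degree-$n$ subdivision of $C$ for $n\in T-S$ is then an $H(T)$-relator, and Greendlinger's lemma applied to the $C'(1/6)$ presentation of $H(S)$ shows that this word represents a non-trivial element of $H(S)$, so lies non-trivially in $K_{S,T}$.

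The acyclicity of $K_{S,T}$ is where I anticipate the bulk of the work. Following the broad template of Bestvina--Brady-style constructions, but with graphical small cancellation in place of cubical Morse theory, my plan is to build an explicit CW complex $Y$ on which $K_{S,T}$ acts freely, and then to compute its reduced homology directly. I will start from the Cayley 2-complex $X_T$ of $H(T)$ and, at each location where a (degree-$n$ subdivision of a) simple cycle of $K^1$ appears as a loop, insert a copy of the corresponding subdivision of $K$ itself, filling that loop with polygons. Since $K$ is acyclic, these insertions do not introduce new homology; a Mayer--Vietoris or similar calculation should identify $\widetilde H_*(Y)$ with $\widetilde H_*(X_S)$, which is trivial by graphical small cancellation for $H(S)$. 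The delicate point will be verifying that $Y$ carries a free $K_{S,T}$-action with the intended homotopy type and does not pick up unwanted homology from the combinatorics of Gromov's graphical small cancellation.

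Finally, for $K_{S,T}\cap H_P(S)=\{1\}$: a word $w$ in the directed edges of a single polygon $P$ with $w=1$ in $H(T)$ admits a reduced van Kampen diagram $D$ over the $H(T)$-presentation. Since $\partial D$ is labelled by a word in the edges of $\partial P$, each outer $2$-cell of $D$ shares a long arc with $\partial P$, and the girth, unbranching, and polygon $C'(1/6)$ conditions force the underlying cycle of that outer relator to coincide with $\partial P$. All such relators are already present in the $H(S)$-presentation, so $D$ can be rewritten as an $H(S)$-diagram, giving $w=1$ in $H(S)$ and hence the required triviality of the intersection.
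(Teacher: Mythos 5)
Your proposal misidentifies the graphical relators, and this undermines the first and most basic step. In Definition~\ref{defn:hs} the relator attached to $n\in S$ is the degree-$n$ subdivision of the \emph{whole} graph $K^1$, taken as a single labelled graph; it is not the family of subdivided simple cycles treated as separate classical relators, which is how you set up your case analysis. Under your reading the $C'(1/6)$ condition is simply false: two distinct simple cycles of $K^1$ can overlap in arcs far longer than a sixth of their length (in the complex built in Section~\ref{sec:complex}, $K^1$ is a $5$-fold subdivision of a complete graph, so two subdivided triangles sharing an original edge give $15$-cycles overlapping in a path of length $5>15/6$; in general two girth cycles may share up to half their length). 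Such overlaps are harmless only because both occurrences sit at a single location inside the one tautologically labelled copy of the subdivided $K^1$, hence are not pieces; the pieces that actually need bounding are the words $a^{|m|-1}$ within a single relator, $a^mb^m$ or $a^m$ between different degrees, and the polygon--polygon overlaps controlled by condition~6, which is what the paper checks. The same misreading affects your non-triviality witness: the subdivided symmetric difference of two polygon boundaries can contain \emph{more} than half of a subdivided $\partial P$, which is a relator of $H(S)$, so Greendlinger does not apply to it as stated; the paper instead uses a girth cycle and needs condition~5 ($l_P>2g$) precisely to make the Dehn-property argument work.

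The two genuinely hard parts are not carried out. For acyclicity you offer only a plan, and it is aimed at the wrong space: what must be shown acyclic is $E(S)/K_{S,T}$, the quotient of the contractible graphical Cayley complex of $H(S)$ by the free $K_{S,T}$-action, not a modification of the Cayley complex of $H(T)$; the paper compares $E(S)/K_{S,T}$ and $E(T)$ inside a common $3$-complex $F$ obtained by equivariantly coning off the copies of (subdivided) $K$, showing one inclusion is a homology isomorphism (cones on acyclic subspaces) and the other an equivariant deformation retraction, and this uses the injectivity of graphical relators into the Cayley graph supplied by the small cancellation theorem --- none of which is verified in your sketch. For $K_{S,T}\cap H_P(S)=1$, the step ``the girth, unbranching, and polygon $C'(1/6)$ conditions force the underlying cycle of that outer relator to coincide with $\partial P$'' is false: condition~3 only forces a shortcut between branch vertices to have length at least~$5$, so a simple cycle of $K^1$ may consist of a long arc of $\partial P$ together with a short external path, and since $l_P\geq 27$ such a cycle has more than half of its length on $\partial P$ while not being an $H(S)$-relator; moreover, after removing an outer cell the boundary word is no longer a word in the edges of $P$, so your induction does not close. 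These ``shortcut'' cycles are exactly the difficulty that Proposition~\ref{prop:hpinjects} is written to handle, via a careful comparison of $H'_P$-reduced and $H(Z)$-reduced words; an argument of that calibre is needed and is absent from your proposal.
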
 

None of the groups $H(S)$ are of type $FP$, however they form the main
building block in our construction of such groups.  

\begin{figure}
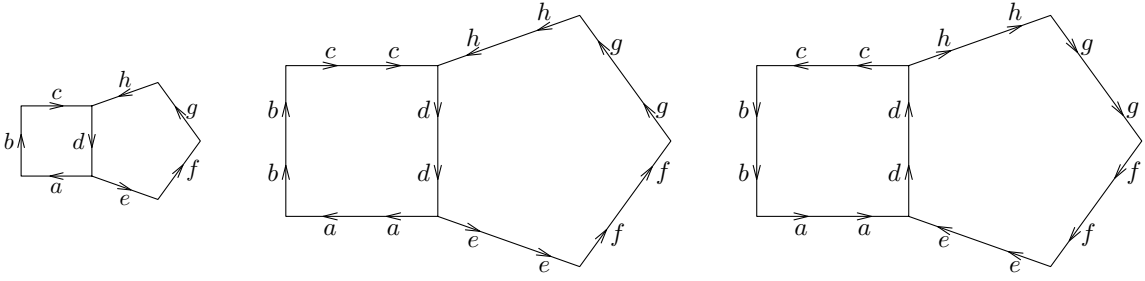

\begin{center}
  \raisebox{-0.5\height}{\includegraphics[height=18mm]{rel1.mps}}\qquad
  \raisebox{-0.5\height}{\includegraphics[height=36mm]{rel2.mps}}\qquad
  \raisebox{-0.5\height}{\includegraphics[height=36mm]{rel3.mps}}
\end{center}
\label{fig:one}
\caption{A graphical relator and its degree $2$ and $-2$ subdivisions.}
\end{figure}

In Proposition~\ref{prop:hpinjects}, as part of the proof of
Theorem~\ref{thm:main}, it will be shown that the isomorphism type of
the polygon subgroup $H_P$ depends only on $Z$ and on the perimeter of
the polygon $P$.  If $P$ has perimeter $l=l_P$ with $a_1,\ldots,a_l$
being the directed edge path around the boundary of $P$, 
then $H_P$ has the following presentation:
$$H_P=\langle a_1,\ldots, a_l\,\,\colon\,\, a_1^na_2^n\cdots a_l^n=1, \,\,\,
n\in Z\rangle.$$ 
The final ingredient that we need is an embedding of $H_P$ into a group 
$G_P$, where $G_P$ is of type $F$.  The existence of such an embedding
puts some constraints on the set $Z$.  We give explicit constructions for
$G_P$ in the cases $Z=\zz-\{0\}$ and $Z=\{k^n\,\colon\, n\geq 0\}$ for any 
integer $k$ with $|k|>1$.  Using Sapir's version of the 
Higman embedding theorem~\cite{sapir}, we show that $H_P$ embeds
in a group of type $F$ if and only if $Z$ is recursively enumerable.
However, we emphasize that our construction requires only one example
of such a $Z$, and so does not require Sapir's landmark result.  

Given such an embedding, we define a group $G(S)$ as the fundamental 
group of a star-shaped graph of groups.  The underlying graph has one 
central vertex and arms of length one indexed by the polygons of $K$. 
The central vertex group is $H(S)$, with $H_P=H_P(S)$ on the edge 
indexed by $P$ and $G_P$ on the outer vertex indexed by $P$.  

\begin{proposition} \label{prop:main} 
  Let the groups $G(S)$ for $S\subseteq Z\subseteq \zz$ be defined as
  described above, for any spectacular complex $K$, any recursively
  enumerable $Z\subseteq \zz$, and for each polygon $P$ of $K$ any
  fixed embedding of the subgroup $H_P$ into a group $G_P$ of
  type~$F$.

  In each such case the group $G(\emptyset)$ is of type
  $F$.  For each $S\subsetneq T\subseteq Z$ there is a surjective
  group homomorphism $G(S)\rightarrow G(T)$ with non-trivial acyclic
  kernel.
\end{proposition} 

\begin{corollary} \label{cor:main}
  Fix a spectacular complex $K$, a subset $Z$ of $\zz$, and embeddings
  of the groups $H_P$ into groups $G_P$ of type~$F$ as in the
  statement of Proposition~\ref{prop:main}.

  For each such set of
  choices there are continuously many isomorphism types of the groups
  $G(S)$ for varying $S\subseteq Z$.  Each group $G(S)$ is of type
  $FP$.  $G(S)$ is finitely presented if and only if $S$ is finite,
  and $G(S)$ embeds as a subgroup of a finitely presented group if and
  only if $S$ is recursively enumerable.  Provided that each $G_P$ has
  geometric dimension two, $G(\emptyset)$ also has geometric dimension
  two and each $G(S)$ has cohomological dimension two.
\end{corollary}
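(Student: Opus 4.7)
The proof assembles the corollary from Proposition~\ref{prop:main} using the Lyndon--Hochschild--Serre spectral sequence together with standard combinatorial group theory. The central observation is that, for the short exact sequence $1\to K_{\emptyset,S}\to G(\emptyset)\to G(S)\to 1$ given by Proposition~\ref{prop:main}, integral acyclicity of the kernel together with the universal coefficient theorem yields $H^q(K_{\emptyset,S};M)=0$ for all $q>0$ and every abelian group $M$ regarded as a $\zz K_{\emptyset,S}$-module with trivial action. The LHS spectral sequence then collapses to the isomorphism $H^*(G(\emptyset);M)\cong H^*(G(S);M)$ for every $G(S)$-module $M$. This transports both type $FP$ and the vanishing of cohomology above degree $2$ from $G(\emptyset)$ to $G(S)$, using the Bieri--Eckmann characterisation of $FP_\infty$ via commutation of cohomology with direct products. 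In particular, type $FP$ for $G(S)$ follows from type $F$ of $G(\emptyset)$, and $\cd G(S)\le 2$ follows from $\cd G(\emptyset)\le \mathrm{gd}\,G(\emptyset)=2$.

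For the geometric dimension $\mathrm{gd}\,G(\emptyset)=2$, the graphical $C'(1/6)$ condition of Theorem~\ref{thm:main} makes the presentation 2-complex of $H(\emptyset)$ aspherical, giving $\mathrm{gd}\,H(\emptyset)=2$; a 2-dimensional $K(G(\emptyset),1)$ is then obtained from the star-shaped graph-of-groups description by choosing compatible CW-structures on $K(H(\emptyset),1)$ and each $K(G_P,1)$ that share a common 2-dimensional sub-$K(H_P,1)$, so gluing does not raise dimension beyond~$2$. The matching lower bound $\cd G(S)\ge 2$ uses that the graph-of-groups structure, combined with the injectivity of $H_P$ under $H(S)\to H(T)$ from Theorem~\ref{thm:main}, embeds each $G_P$ into $G(S)$, and $\cd G_P=2$ for the explicit $G_P$ constructed in the paper.

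For the continuum of isomorphism types, $G(\emptyset)$ is finitely generated, so only countably many of its quotients can be isomorphic to any fixed countable group; since Theorem~\ref{thm:main} produces distinct normal subgroups $K_{\emptyset,S}$ for distinct $S\subseteq Z$, the $2^{\aleph_0}$ subsets must yield $2^{\aleph_0}$ isomorphism classes. For finite presentability, if $S$ is finite then only finitely many new relators (one per simple cycle in the finite graph $K^1$ and each $n\in S$) must be added to the finite presentation of $G(\emptyset)$ to present $G(S)$; if $S$ is infinite, writing $S=\bigcup_i S_i$ as an ascending union of finite subsets produces a strictly increasing chain $K_{\emptyset,S_i}$ of normal subgroups whose union equals $K_{\emptyset,S}$, which therefore cannot be finitely normally generated in $G(\emptyset)$. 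For the recursive-enumerability dichotomy, the backward direction lists the relators of $G(S)$ algorithmically (using r.e.\ enumerations of $Z$ and $S$) and invokes Higman's embedding theorem; the forward direction uses that finitely generated subgroups of finitely presented groups are recursively presented, so that the word problem on $G(S)$ is semi-decidable, and one then recognises $n\in S$ by checking whether a specific element $w_{n,C}$---the degree-$n$ subdivision of a fixed non-polygonal simple cycle $C$ in $K^1$---vanishes in $G(S)$, this element being non-trivial in $G(S)$ whenever $n\notin S$ by the non-triviality of $K_{S,S\cup\{n\}}$ in Theorem~\ref{thm:main} together with the graphical small cancellation structure.

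The main anticipated obstacle is the geometric dimension statement for $G(\emptyset)$: the naive mapping cylinder construction for a graph of groups with cd-$2$ vertex and edge groups yields only a $3$-dimensional $K(\pi,1)$, so one must carefully arrange that $K(H_P,1)$ appears as a common subcomplex in compatibly-chosen 2-dimensional classifying spaces for $H(\emptyset)$ and $G_P$. This asphericity of the natural sub-2-complex of the small cancellation complex of $H(\emptyset)$ corresponding to $H_P$ should itself follow from verifying the $C'(1/6)$ condition on the polygon subdivisions for a single $P$, using the perimeter lower bound $l_P>2g\ge 26$ from Definition~\ref{defn:kdefn}. A secondary subtlety lies in the forward direction of the r.e.\ criterion, where one must verify that the candidate words $w_{n,C}$ genuinely detect $n\in S$ rather than becoming trivial for accidental reasons; this will rely on the graphical small cancellation properties ensuring that the generators of $K_{S,T}$ are not derivable from the relators of $G(S)$.
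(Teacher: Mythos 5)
Your treatment of the geometric-dimension statement does not go through as proposed. You reduce it to arranging that a $2$-dimensional $K(H_P,1)$ sits as a common subcomplex of $2$-dimensional classifying spaces for $H(\emptyset)$ and for $G_P$, but you give no construction on the $G_P$ side, and for the explicit groups $G_P$ used in the paper none is available in the form you describe: when $Z$ is infinite the presentation $2$-complex $Y_P$ of $H_P$ has infinitely many $2$-cells, so it cannot be a subcomplex of the finite $2$-dimensional models for $G_P$ (the Salvetti complex when $Z=\zz-\{0\}$, the presentation complex of Proposition~\ref{prop:kpowers} when $Z=\{k^n\}$), and nothing in your sketch produces some other $2$-dimensional $K(G_P,1)$ containing $Y_P$ compatibly. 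Note also that the same space is expected to witness type $F$, so it should be finite as well as $2$-dimensional. The paper sidesteps the issue rather than solving it your way: it attaches the mapping cylinder of a map $f_P:Y_P\rightarrow Z_P$ for each polygon (a priori $3$-dimensional) and then deformation retracts each cylinder onto the mapping cylinder of the restriction $f_P|_{Y_P^1}$ to the $1$-skeleton; these retractions can be performed simultaneously because every $2$-cell of the presentation complex of $H(\emptyset)$ lies in exactly one $Y_P$, and the resulting complex is finite and $2$-dimensional whenever each $Z_P$ is. So what you flag as the ``main anticipated obstacle'' is a genuine gap, not a technicality.

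The homological transfer you invoke for type $FP$ is also mis-stated. Cohomology always commutes with direct products of coefficients, so there is no ``Bieri--Eckmann characterisation of $FP_\infty$ via commutation of cohomology with direct products''; the usable criteria involve homology and products (or cohomology and filtered colimits), and the isomorphism $H^*(G(S);M)\cong H^*(G(\emptyset);M)$ for inflated modules $M$ does not by itself transport $FP$: one must apply the homological criterion to $\prod\zz[G(S)]$ and compute $H_k(G(\emptyset);\zz[G(S)])$ via Shapiro's lemma and acyclicity of the kernel, or simply argue as the paper does via Proposition~\ref{prop:coh}, where the quotient of the universal cover of a finite $K(G(\emptyset),1)$ by the acyclic kernel is a finite free acyclic $G(S)$-complex, giving type $FH$ (hence $FP$) and $\cd\le 2$ in one stroke. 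The remaining parts are essentially correct though different from the paper, which instead computes $\calr((a_1,\ldots,a_g),G(S))=S\cup\{0\}$ and quotes the properties of this invariant from~\cite{fpg}: your counting of surjections from the finitely generated $G(\emptyset)$, your ascending-chain argument for non-finite-presentability, and your r.e.\ dichotomy all work, but (i) distinctness of the kernels for incomparable $S,T$ and the non-vanishing of $w_{n,C}$ in $G(S)$ for $n\notin S$ require the Dehn-property computation in $H(S)$ \emph{together with} injectivity of the vertex group $H(S)\rightarrow G(S)$, not merely the non-triviality of $K_{S,S\cup\{n\}}$ from Theorem~\ref{thm:main}; and (ii) the recursive enumerability of $Z$, which your enumeration of relators tacitly uses, should be justified by observing that $H_P$ embeds in the finitely presented group $G_P$.
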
 

Corollary~\ref{cor:main} should be compared with theorems 1.2~and~1.3
from~\cite{fpg}, in the special case when the flag complex $L$ used 
there is acyclic and aspherical.

Although the proofs are rather different, there is an overlap between
the groups $G(S)$ obtained from the new construction and the generalized
Bestvina--Brady groups of~\cite{fpg}.  In the case when $Z=\zz-\{0\}$, 
each $H_P$ is isomorphic to a Bestvina--Brady group and we may take 
for $G_P$ the corresponding right-angled Artin group.  For these 
choices, the group 
$G(S)$ is naturally isomorphic to the generalized Bestvina--Brady group 
$G_L(S\cup\{0\})$ of~\cite{fpg}, where $L$ is the flag triangulation 
of $K$ obtained by viewing each polygon as a cone on its boundary.  
In particular with these choices $G(\zz-\{0\})$ is isomorphic to 
the Bestvina--Brady group $BB_L$.

In contrast, in the case when $Z=\{k^n\,\colon\, n\geq 0\}$ for $|k|>1$, 
our choice for $G_P$ leads to a group $G(S)$ in which each generator
for $H(S)$ is conjugate to its own $k$th power.  Hence any semisimple
action of $G(S)$ on a CAT(0) space will have $H(S)$ in its kernel,
indicating that these groups are very different to generalized
Bestvina--Brady groups.  

In the next section, we give some background material concerning
finiteness properties, graphical presentations and graphical small
cancellation.  Most of this section is well-known material, but
we give some foundational results concerning graphical presentations
for which we have been unable to find a reference.  In  
Section~\ref{sec:smallc} we use graphical small cancellation methods
to prove Theorem~\ref{thm:main}.  In Section~\ref{sec:graphgp} we use
standard methods from graphs of groups to prove
Proposition~\ref{prop:main} and deduce Corollary~\ref{cor:main}.  In
Section~\ref{sec:embed} we discuss embeddings of the polygon groups
$H_P$ into 2-dimensional groups $G_P$ of type~$F$.  In 
Section~\ref{sec:complex} we establish the existence of a 2-complex
$K$ with the required properties, using some background material
concerning projective linear groups that is described in 
Section~\ref{sec:pgltwo}.  The ordering of the material  
reflects the history of our work: in particular we had a rough version
of the main theorem long before we had established the existence of a
spectacular complex~$K$.  Finally, Section~\ref{sec:ques} discusses some
questions that remain open.

This work was done while the first named author was working on his PhD
under the supervision of the second named author.  Further properties
of the groups $G(S)$ and other methods for constructing spectacular complexes 
$K$ can be found in the PhD thesis of the first named
author~\cite{brownphd}.  The main motivation for this
work was the observation that the relations in the presentations for
the groups $G_L(S)$ given in~\cite[Defn.~1.1]{fpg} consist of a large
family of `long' relations together with a finite number of `short'
relations.  Another motivation (which predates the work in~\cite{fpg})
was a conversation between the second named author and Martin Bridson,
in which Martin Bridson pointed out that there ought to be other
constructions of non-finitely presented groups of type~$FP$ apart from
that of Bestvina--Brady.  The authors gratefully acknowledge this
inspiration.  The authors also gratefully acknowledge helpful comments
on this work by Tim Riley.  Finally, the authors thank the anonymous
referee, whose numerous comments on four earlier versions have greatly
improved the exposition and led to the correction of some minor errors.  

\section{Background} 
\label{sec:background}

We begin with some remarks concerning 2-dimensional CW-complexes
whose 1-skeleta are simplicial graphs and such that the attaching
maps for 2-cells are embeddings, i.e., complexes that satisfy
conditions 1~and~2 from the definition of a spectacular complex.
Two types of subdivision of such complexes will be of interest.
Firstly, for $m>1$, there is a subdivision in which each
1-cell $e$ is subdivided into $m$ subintervals, with $m-1$
new 0-cells at their intersections.  In this subdivision the attaching 
maps for the 2-cells are unchanged, although the length of each 2-cell
is multiplied by $m$.  For $m\geq 5$ conditions 3~and~4 in the definition
of a spectacular complex will always hold for this subdivision.  
Secondly, if we replace each 2-cell by the cone
on its boundary, we obtain a simplicial complex that is homeomorphic
to the original complex, with one new vertex for each old 2-cell.
We call this simplicial complex the \emph{conical subdivision} of the
original complex.  

Although we do not need this result, we digress to explain the
connection between the fundamental group of a spectacular 2-complex
and a standard small cancellation group.

\begin{proposition}
  Let $K$ be a $2$-complex satisfying conditions $1$, $2$ and $6$ from
  the definition of a spectacular complex and having $n$ vertices.  Let
  $G$ be the group with inverse pairs of generators given by the edges
  of $K$ and relators the words along the polygons of $K$.  Then $G$
  is a classical $C'(1/6)$ small cancellation group and there is
  an isomorphism $\pi_1(K)*F\cong G$, where $F$ is a
  free group of rank $n-1$.
\end{proposition}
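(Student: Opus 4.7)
The plan is to verify the two assertions separately: first the classical $C'(1/6)$ small cancellation condition for the presentation of $G$, and then the free product decomposition by recognising $G$ as the fundamental group of an auxiliary 2-complex built from $K$.

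For the small cancellation condition, observe that each relator is the cyclic word $w_P = e_1 e_2 \cdots e_{l_P}$ obtained from traversing the boundary of a polygon $P$ of $K$. By condition~2 the attaching map $\partial P \hookrightarrow K^1$ is an embedded circle, so each $w_P$ is cyclically reduced and no directed edge of $K^1$ appears twice along $\partial P$. Pieces then split into three cases: (i) a piece of positive length common to $w_P$ and a nontrivial cyclic rotation of itself would force two distinct positions on $\partial P$ to carry the same directed edge, contradicting the embedding; (ii) a piece common to $w_P$ and $w_P^{-1}$ would force some edge of $\partial P$ to be traversed in both orientations, again contradicting the embedding; (iii) a piece common to (a rotation of) $w_P$ and (a rotation or inverse of) $w_Q$ for $P \neq Q$ traces an edge-path lying inside a single connected component of $\partial P \cap \partial Q \subseteq K^1$, whose length is strictly less than $\min\{l_P/6,l_Q/6\}$ by condition~6. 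Taken together, these three observations are precisely the classical $C'(1/6)$ condition.

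For the free product decomposition, the key construction is an auxiliary 2-complex $X = K \cup_{K^0} Y$, where $Y$ is a tree with vertex set $K^0$ whose $n-1$ edges are freshly adjoined to $K$ (for instance, a path through the vertices of $K$ added on top of $K^1$). Then $Y$ is contractible and $(X,Y)$ is a CW-pair, so the quotient map $X \to X/Y$ is a homotopy equivalence, and an inspection of cells shows that $X/Y = K/K^0$ is exactly the standard 2-complex of the presentation defining $G$ (one vertex, one loop for each edge of $K^1$, and one 2-cell attached along each polygon boundary word). Hence $\pi_1(X) \cong G$. On the other hand, assuming $K$ is connected, any spanning tree $T$ of $K^1$ is also a spanning tree of $X^1$; using $T$ to read off a presentation of $\pi_1(X)$, the generators are the non-tree edges of $K^1$ together with all $n-1$ edges of $Y$, and the relators are the polygon boundary words, which involve only edges of $K^1$. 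Since no edge of $Y$ appears in any relator, this presentation splits as a free product $\pi_1(K) * F$ with $F$ free of rank $n-1$ on the edges of $Y$. Comparing the two computations of $\pi_1(X)$ yields the desired isomorphism $G \cong \pi_1(K) * F$.

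The principal bookkeeping challenge is the piece analysis in the small cancellation step (properly distinguishing self-rotations, inverses, and cross-polygon pieces); once that is set up, both assertions follow by direct computation, and no properties of $K$ beyond conditions 1, 2 and 6 are used.
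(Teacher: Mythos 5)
Your proposal is correct and follows essentially the same route as the paper: the paper also computes $\pi_1$ of an auxiliary complex in two ways, gluing the cone on $K^0$ (a star with one new vertex) onto $K$ instead of your path on the existing vertices, and reading off the presentation of $G$ from that spanning tree rather than collapsing it. Your explicit piece analysis for the $C'(1/6)$ claim and your remark that connectedness of $K$ is needed for the stated free product are details the paper leaves implicit, and both are handled correctly.
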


\begin{proof}
  Let $T$ be the cone on the 0-skeleton $K^0$ of $K$, so that $T$ is
  a star with $n$ arms,
  and let $X$ be the 2-complex defined as the union of $K$ and $T$,
  identifying $K^0$.  It is easy to see that $X$ is homotopy
  equivalent to the 1-point union of $K$ and the complete bipartite
  graph $K(2,n)$, and so $\pi_1(X)\cong \pi_1(K)*F$.  To complete the
  proof it suffices to show that $\pi_1(X)$ is isomorphic to the group
  $G$ described in the statement.  Since $T$ contains every vertex of $X$,
  it is a maximal tree in $X$.  Using this maximal tree as our starting
  point we obtain a presentation of $\pi_1(X)$ with generators the
  directed edges of $X$ not contained in $T$ and with relators corresponding
  to the 2-cells of $X$.  This presentation is exactly the one given for the
  group $G$, and this presentation is $C'(1/6)$.  
\end{proof}

An \emph{Eilenberg--Mac~Lane space} for a group $G$ is a connected CW-complex
whose fundamental group is isomorphic to $G$ and whose universal cover
is contractible.  Any two such spaces are based homotopy equivalent.

A group $G$ is \emph{of type $F$} if $G$ admits an Eilenberg--Mac~Lane
space with finitely many cells.  A space is \emph{acyclic} if it has
the same homology as a point.  A group $G$ is \emph{of type $FH$} if
there is a free $G$-CW-complex that is acyclic and has only finitely
many orbits of cells.  A group $G$ is \emph{of type $FL$} if the trivial
module $\zz$ for its group algebra $\zz G$ admits a finite resolution
by finitely generated free $\zz G$-modules.  Finally, a group $G$ is
\emph{of type $FP$} if $\zz$ admits a finite resolution by finitely
generated projective $\zz G$-modules.  From the definitions it is easy
to see that
$$F\implies FH \implies FL \implies FP,$$
and it may be shown that any finitely presented
group of type $FL$ is of type $F$.
For further details concerning these properties 
see~\cite{bie,brobook} and~\cite[Sec.~1]{bb}.  
We require one general result concerning finiteness properties.
A group is said to be \emph{acyclic} if its Eilenberg--Mac~Lane space is 
acyclic.  

\begin{proposition}
  \label{prop:coh} 
  Suppose that $G$ is of type $F$ and that $N$ is an acyclic normal
  subgroup of~$G$.  The group $G/N$ is of type $FH$, and the
  cohomological dimension of $G/N$ is bounded above by the
  geometric dimension of $G$.
\end{proposition}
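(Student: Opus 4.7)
The plan is to build a free $G/N$-CW-model from a classifying space for $G$ by quotienting out the $N$-action, and to read off both conclusions from it. First I would use that $G$ is of type $F$ to choose a finite Eilenberg--Mac~Lane space $X=K(G,1)$, and pass to its universal cover $\widetilde{X}$, which is a contractible free $G$-CW-complex with finitely many $G$-orbits of cells. Since $N\leq G$ acts freely on $\widetilde{X}$, the quotient $Y:=\widetilde{X}/N$ has universal cover $\widetilde{X}$ and fundamental group $N$, so $Y$ is an Eilenberg--Mac~Lane space $K(N,1)$; by the hypothesis that $N$ is an acyclic group, $Y$ is therefore acyclic.

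Next I would verify that the induced action of $G/N$ on $Y$ is free and has finitely many orbits of cells. Freeness is a short check: if $gN\cdot[\widetilde{x}]=[\widetilde{x}]$ in $Y$, then $g\widetilde{x}=n\widetilde{x}$ for some $n\in N$, and since $G$ acts freely on $\widetilde{X}$ this forces $g=n\in N$, i.e.\ $gN$ is the identity coset. The number of $G/N$-orbits of cells in $Y$ equals the number of $G$-orbits of cells in $\widetilde{X}$, which is finite. So $Y$ is a finite free acyclic $G/N$-CW-complex, and by definition this exhibits $G/N$ as a group of type $FH$.

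For the bound on cohomological dimension, I would repeat the construction starting instead from a (no longer necessarily finite) classifying space for $G$ realising the geometric dimension $d$ of $G$. Its universal cover $\widetilde{W}$ is a contractible free $G$-CW-complex of dimension $d$, and the same argument as above shows that $\widetilde{W}/N$ is a $d$-dimensional acyclic free $G/N$-CW-complex. Its augmented cellular chain complex then provides a free resolution of $\zz$ over $\zz[G/N]$ of length $d$, giving $\cd (G/N)\leq d$. The argument is fairly routine; the only points that need care are checking that the induced $G/N$-action on the quotient is still free, and recalling that by the definition preceding the proposition, acyclicity of the group $N$ is precisely acyclicity of any $K(N,1)$.
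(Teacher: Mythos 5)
Your proof is correct and follows essentially the same route as the paper: quotient the universal cover of a classifying space for $G$ by $N$ to obtain an acyclic $K(N,1)$ with a free cellular $G/N$-action, read off type $FH$ from the finiteness of the orbit count, and bound $\cd(G/N)$ by the dimension of the quotient via its (free over $\zz[G/N]$) cellular chain complex. The only difference is cosmetic: you run the construction twice, once with a finite model and once with a minimal-dimensional one, which is a reasonable precaution the paper leaves implicit.
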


\begin{proof}
  Let $X$ be the universal covering space of an Eilenberg--Mac~Lane
  space for $G$, and consider the quotient $X/N$.  This is an
  Eilenberg--Mac~Lane space for $N$, equipped with a free cellular
  action of $G/N$.  Since $N$ is acyclic, $X/N$ is acyclic.  The
  $G/N$-orbits of cells in $X/N$ correspond to the $G$-orbits of
  cells in $X$, and so if $G$ is of type $F$ then $G/N$ is of type $FH$.
  If $X$ has dimension $n$ then so does $X/N$, and the dimension
  of $X/N$ is an upper bound for the cohomological dimension of
  $G/N$.
\end{proof}

A \emph{graph of groups} indexed by a graph $\Gamma$ consists of groups
$G_v$ and $G_e$ for each vertex $v$ and edge $e$ of $\Gamma$,
together with two injective group homomorphisms $G_e\rightarrow G_v$
from the edge group $G_e$ to the vertex groups corresponding to the
ends of $e$.  A \emph{graph of based spaces} is defined similarly.  If each
vertex and edge space in a graph of spaces is an Eilenberg--Mac~Lane
space and the induced maps on fundamental groups are all injective,
then the homotopy colimit (in the category of unbased spaces) of
the graph of spaces is also an Eilenberg--Mac~Lane
space, whose fundamental group is by definition the \emph{fundamental
  group of the graph of groups}.  See~\cite[Ch.~1.B]{hatcher} for a
treatment of this topic, and either
\cite[Appendix]{ijlrs}~or~\cite[Sec.~4]{dror} for a treatment that
mentions homotopy colimits.  We highlight two special cases of the
Eilenberg--Mac~Lane space for the fundamental group of a graph of
groups that will appear in our work.  See also Figure~\ref{fig:two}.   

\begin{proposition}
  \label{prop:graphofgroups}
  \begin{enumerate}
  \item{}
  Suppose that $X$ is an Eilenberg--Mac~Lane space for the group $G$
  and that $f:X\rightarrow X$ is a based map that induces an injective
  homomorphism $\phi:G\rightarrow G$.  Then the mapping torus of $f$
  is an Eilenberg--Mac~Lane space for the ascending HNN-extension
  $\langle G,t \,: \, tgt^{-1}=\phi(g),\,g\in G\rangle$.  
\item{}
  Fix an indexing set $I$ and suppose that $X$, $Y_i$ and $Z_i$ are
  Eilenberg--Mac~Lane spaces for groups $H$, $H_i$ and $G_i$ respectively.
  Suppose also that $f_i:Y_i\rightarrow X$ and $g_i:Y_i\rightarrow Z_i$
  are based maps that induce injective group homomorphisms
  $\phi_i:H_i\rightarrow H$ and $\psi_i:H_i\rightarrow G_i$.
  In this case the star-shaped graph of spaces given as the
  following identification space 
  \[ \frac{\left( X \sqcup \bigsqcup_{i\in I} (Y_i\times [0,1]) \sqcup
  \bigsqcup_{i\in I} Z_i\right)}{(y_i,0)\sim f_i(y_i),\,\, (y_i,1)\sim g_i(y_i)}\]
  is an Eilenberg--Mac~Lane space for the corresponding star-shaped
  graph of groups.
  \end{enumerate}
\end{proposition}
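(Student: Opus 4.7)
My plan is to realize both parts as direct applications of the general graph-of-spaces principle recalled immediately before the statement of the proposition: if the edge and vertex spaces of a graph of aspherical spaces are aspherical and the edge-to-vertex maps are $\pi_1$-injective, then the homotopy colimit is an Eilenberg--Mac~Lane space for the corresponding graph of groups. In each case I would (i) exhibit the given identification space as homeomorphic to the homotopy colimit of an appropriate graph of aspherical spaces with injective edge maps, and (ii) identify the resulting fundamental group via its standard graph-of-groups presentation.

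For part 1, I would take the underlying graph to be the graph with a single vertex $v$ and a single loop edge $e$, with vertex space $X$, edge space $X$, and the two endpoint maps realized by $\mathrm{id}_X\colon X\to X$ and by $f\colon X\to X$. The resulting homotopy colimit is, up to homeomorphism, the quotient of $X\sqcup(X\times[0,1])$ by $(x,0)\sim x$ and $(x,1)\sim f(x)$, which is exactly the mapping torus of $f$. Both induced maps on $\pi_1$ are injective (the identity trivially, and $\phi$ by hypothesis), so the general principle applies. The standard van~Kampen / Bass--Serre calculation of the fundamental group of a graph of groups with one vertex and one loop edge produces precisely the HNN extension with stable letter $t$ and relations $tgt^{-1}=\phi(g)$, as claimed.

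For part 2, I would take the underlying graph to be the star with central vertex $v$ and peripheral vertices $v_i$ indexed by $I$, with a single edge $e_i$ joining $v$ to $v_i$. I assign vertex space $X$ at $v$ and $Z_i$ at $v_i$, edge space $Y_i$ at $e_i$, with endpoint maps $f_i\colon Y_i\to X$ and $g_i\colon Y_i\to Z_i$. The identification space in the statement is by construction the mapping-cylinder realization of this graph of spaces, and the injectivity of $\phi_i$ and $\psi_i$ is exactly the hypothesis required. Since the underlying graph is a tree, the fundamental group of the graph of groups is the iterated amalgamated free product of the $G_i$ along the subgroups $H_i$ over the base group $H$, which is by definition the fundamental group of the star-shaped graph of groups.

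The hard part is the general principle itself, namely that under the injectivity hypothesis the homotopy colimit of a graph of aspherical spaces is aspherical; this is a standard fact proved by lifting to the universal cover and observing that it decomposes as a tree of contractible pieces indexed by the Bass--Serre tree of the graph of groups. In writing the proof I would cite~\cite[Ch.~1.B]{hatcher} (already referenced in the paper just above the statement) rather than reprove this, so that the main content of the argument reduces to the two routine bookkeeping checks above: matching the given identification space with the correct graph-of-spaces data, and matching the standard graph-of-groups presentation with the ascending HNN-extension in part 1 and with the star-shaped amalgamated product in part 2.
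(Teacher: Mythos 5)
Your proposal is correct and matches the paper's treatment: the paper gives no separate proof, instead presenting both parts as instances of the general graph-of-spaces principle stated just beforehand (homotopy colimit of aspherical vertex/edge spaces with $\pi_1$-injective edge maps), with a citation to Hatcher, Ch.~1.B. Your identification of the mapping torus with the one-vertex/one-loop case and of the star-shaped identification space with the tree case is exactly the intended bookkeeping.
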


\begin{figure}
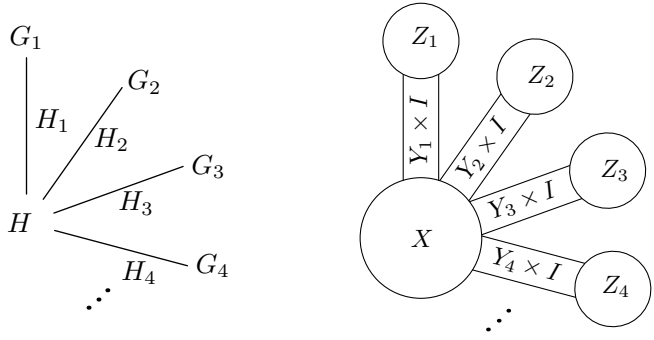

\begin{center}
  \raisebox{-0.5\height}{\includegraphics[height=38mm]{graph1.mps}}\qquad
  \qquad 
  \raisebox{-0.5\height}{\includegraphics[height=44mm]{graph2.mps}}
\end{center}
\label{fig:two}
\caption{A star-shaped graph of groups and its Eilenberg--Mac~Lane space.}
\end{figure}

Next we describe \emph{graphical small cancellation theory} as
in~\cite[Sec.~2]{gromov}, \cite{ollivier}~and~\cite{gruber}.
The theory subsumes 
the classical small cancellation theory~\cite[Ch.~V]{lynsch},
which is the case in which the graph $\Gamma$ considered below is a
disjoint union of cycles.  Note that we consider only the version
that yields torsion-free groups, which corresponds in the classical
case to excluding the possibility that a relator is a proper power.
This is the only case discussed by Ollivier~\cite{ollivier}.
Gruber discusses the more general case, but uses terms like 
`$C'(1/6)$' and `$C(7)$' in the case that yields only torsion-free
groups and terms like `$Gr'(1/6)$', `$Gr(7)$' for the more general
case~\cite{gruber}.  

Before introducing the graphical small cancellation conditions
we start by discussing graphical presentations of groups and
the associated graphical presentation complexes.  

A \emph{labelling} of a graph $\Gamma$ consists of a set $L$ of
labels, together with a fixed-point free involution $\tau:L\rightarrow
L$, and a labelling function $\phi$ from the set of directed edges
of $\Gamma$ to $L$ so that the label on the opposite edge to $e$ is
$\tau\circ\phi(e)$.  We emphasize that when $\Gamma$ is viewed as a
topological space, each pair of opposite directed edges corresponds
to a single 1-cell of $\Gamma$ with its two orientations, rather than
two distinct 1-cells.  
A labelling is said to be \emph{reduced} if the graph
$\Gamma$ contains no vertices of valence 0~or~1 and, for every vertex
$v$, the labelling function $\phi$ is injective on the outward-pointing
edges incident on $v$.  If a labelling is reduced then any word in the
elements of $L$ will describe at most one edge path starting at any
vertex $v$ of $\Gamma$.  A reduced word in $L$ is a finite sequence
of elements of $L$ that contains no subword $(l,\tau(l))$.  Conversely,
if $\phi:\Gamma\rightarrow L$ is a reduced labelling, then any directed
edge path in $\Gamma$ can be described by its initial vertex and
a word in the elements of $L$.  

A \emph{graphical presentation} is a 4-tuple $(\Gamma,L,\tau,\phi)$,
where $\Gamma$ is a graph and $\phi$ is a reduced labelling of
$\Gamma$ by $L$.  The group $G(\Gamma,L,\tau,\phi)$ 
presented by a graphical presentation
is the group given by the following ordinary presentation.  The
generators are the elements of $L$, subject to the following 
relations: for each $l\in L$, $\tau(l)$ is the inverse of $l$, 
and for every simple cycle in $\Gamma$, the word in $L$
obtained by going around that cycle is equal to the identity.
This definition is used in~\cite[Thm.~1]{ollivier}.

A reduced labelling of a graph $\Gamma$ by $L$ can be viewed as
defining an immersion from $\Gamma$ to the rose $R_L$, which is the
1-vertex graph with directed edges in bijective correspondence with
the elements of $L$.  We remind the reader that $R_L$ has $|L|/2$
distinct 1-cells.  The fundamental group of $R_L$ is of course
a free group with representatives of the $\tau$-orbits in $L$
as free generators.  The group 
$G(\Gamma,L,\tau,\phi)$ can be viewed as the quotient of the fundamental
group of $R_L$ by the normal subgroup generated by the images of the
fundamental groups of the components of $\Gamma$ under this immersion.
This is the definition of $G(\Gamma,L,\tau,\phi)$ used
in~\cite[Defn.~1.1]{gruber}.  It is immediate that the two
definitions describe the same group, essentially because the
fundamental group of a graph can always be generated as a normal
subgroup by simple cycles. 

The presentation 2-complex associated to a graphical presentation
in both \cite{ollivier}~and~\cite{gruber} relies on a choice of
free basis for the fundamental group of each component of $\Gamma$.
We prefer a different complex that requires no such choice, but we
shall show that our complex is homotopy equivalent to those used
in~\cite{ollivier,gruber}.  

A \emph{graphical $2$-cell} of the graphical presentation
$(\Gamma,L,\tau,\phi)$ is the cone on a component of $\Gamma$.
We emphasize that this definition does not appear
in~\cite{ollivier,gruber}.  
In the case when $\Gamma$ is a simplicial graph, each graphical
2-cell can be given the structure of a 2-dimensional simplicial
complex with one new vertex at the cone point, and with new edges
and triangles in bijective correspondence with the vertices and edges
(respectively) of the relevant component of $\Gamma$.  This is the
natural generalization of the conical subdivision of a
standard 2-cell.  The \emph{boundary} of a graphical 2-cell
is the base of the cone, a component of $\Gamma$.  

The \emph{graphical presentation $2$-complex} associated to a graphical
presentation $(\Gamma,L,\tau,\phi)$ is the 2-dimensional space
obtained by attaching the graphical 2-cells to the rose $R_L$,
using the immersions induced by $\phi$ to identify the boundary
of each graphical 2-cell with its image in $R_L$.  As was mentioned
in the introduction, a \emph{graphical relator} is just a component
of the labelled graph $\Gamma$.  Just as in the classical case,
the graphical presentation 2-complex is obtained by attaching
graphical 2-cells corresponding to the graphical relators to
a rose $R_L$.  

The \emph{graphical $1$-skeleton} of the graphical presentation
2-complex is just the rose $R_L$.  We emphasize that 
this will not usually be the whole 1-skeleton for any CW-structure
on the graphical presentation 2-complex.  

To show that our graphical presentation complex is naturally
homotopy equivalent to those used in~\cite{ollivier,gruber}
we will use the following proposition.  

\begin{proposition}\label{prop:allaresame} 
  Let $(\Gamma,L,\tau,\phi)$ be a labelled graph and for each
  component $\Gamma_i$ of $\Gamma$, suppose that $X_i$ and $Y_i$
  are contractible CW-complexes containing $\Gamma_i$ as a
  subcomplex.  Let $X$ (resp.~$Y$) be constructed by attaching
  each $X_i$ (resp.~$Y_i$) to $R_L$ using the map $\Gamma_i\rightarrow R_L$
  induced by $\phi$.  The complexes $X$ and $Y$ are homotopy
  equivalent relative to $R_L$.  
\end{proposition}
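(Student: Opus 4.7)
The plan is to work one component at a time: for each component $\Gamma_i$ of $\Gamma$, I would construct a map $f_i\colon X_i\to Y_i$ that restricts to the identity on $\Gamma_i$ and is a homotopy equivalence rel $\Gamma_i$, and then glue these maps together with the identity on $R_L$ via the universal property of the pushouts defining $X$ and $Y$.

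To construct $f_i$, I would extend the inclusion $\Gamma_i\hookrightarrow Y_i$ cell-by-cell across $X_i\setminus\Gamma_i$. At each stage the obstruction to extending across an $n$-cell lies in $\pi_{n-1}(Y_i)$, which is trivial because $Y_i$ is contractible; since $(X_i,\Gamma_i)$ is a CW-pair, the cells may be treated one at a time. In the same way I would produce $g_i\colon Y_i\to X_i$ extending $\Gamma_i\hookrightarrow X_i$.

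To verify that $f_i$ is a homotopy equivalence rel $\Gamma_i$, I would compare $g_i\circ f_i$ with the identity $\mathrm{id}_{X_i}$. Both are maps $X_i\to X_i$ whose restrictions to $\Gamma_i$ agree with the inclusion, so the same cellular argument—applied now to the CW-pair $(X_i\times I,\, (X_i\times\partial I)\cup(\Gamma_i\times I))$ with target the contractible space $X_i$—shows that the two maps are homotopic rel $\Gamma_i$, the successive obstructions lying in $\pi_n(X_i)=0$. Symmetrically $f_i\circ g_i\simeq \mathrm{id}_{Y_i}$ rel $\Gamma_i$. Thus $f_i$ is a homotopy equivalence under $\Gamma_i$.

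Finally, because every $f_i$ fixes $\Gamma_i$ pointwise, the collection of maps $\{f_i\}$ together with $\mathrm{id}_{R_L}$ is compatible with the identifications $\gamma\sim\phi_i(\gamma)$ used to form $X$ and $Y$, so by the universal property of the pushout they descend to a map $f\colon X\to Y$ which is the identity on $R_L$; likewise we obtain $g\colon Y\to X$. The homotopies $g_i\circ f_i\simeq \mathrm{id}_{X_i}$ rel $\Gamma_i$ combine with the stationary homotopy on $R_L$ to give a homotopy $g\circ f\simeq \mathrm{id}_X$ rel $R_L$, and symmetrically for $f\circ g$, completing the proof. The one genuinely delicate point is the cell-by-cell extension/obstruction step, and this step depends essentially on the contractibility of both $X_i$ and $Y_i$; once it is in place, the gluing at the end is formal from the pushout structure.
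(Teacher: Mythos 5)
Your proposal is correct and follows essentially the same route as the paper: a cell-by-cell extension argument (the paper isolates this as a lemma about CW-pairs $(X,A)$, $(Y,A)$ with contractible targets) producing maps $f_i$, $g_i$ and homotopies rel $\Gamma_i$, which are then glued with the identity on $R_L$ to give the equivalence rel $R_L$.
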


To prove this proposition we require one lemma.  Recall that a
\emph{CW-pair} $(X,A)$ is a pair consisting of a CW-complex $X$ and a
subcomplex $A$.

\begin{lemma}
  Suppose that $(X,A)$ and $(Y,A)$ are CW-pairs, and that $Y$ is
  contractible.  Then there is a map $f:X\rightarrow Y$ extending
  the identity map on $A$, and any two such maps are homotopic
  relative to $A$.

  If $X$ is also contractible, then $X$ and $Y$ are homotopy
  equivalent relative to $A$.
\end{lemma}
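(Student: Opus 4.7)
The plan is to use the standard cellular extension argument: since $Y$ is contractible and a CW-complex, all its homotopy groups vanish, so any map defined on the boundary of a cell extends over the cell. Applying this inductively over the relative skeleta of $(X,A)$ will supply both the extension and the rel-$A$ homotopy.

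First, to construct $f:X\to Y$ extending the identity on $A$, I would work cell by cell through the cells of $X$ not in $A$, ordered by dimension. At each stage the map is already defined on the attaching image of the next cell, which is a sphere in $Y$; since $\pi_n(Y)=0$ for all $n$, that map extends over the cell. This produces $f$ and depends only on the existence of $A\hookrightarrow Y$ as a subcomplex to start the induction. For the uniqueness claim, given two extensions $f_0,f_1:X\to Y$, consider the CW-pair $(X\times I,\,W)$ where $W=X\times\{0,1\}\cup A\times I$, and define $H:W\to Y$ by $H(\cdot,0)=f_0$, $H(\cdot,1)=f_1$, $H(a,t)=a$ on $A\times I$. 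The exact same cellular extension argument then extends $H$ to all of $X\times I$, producing the desired rel-$A$ homotopy.

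For the final statement, if $X$ is also contractible then the first part (with the roles of $X$ and $Y$ swapped) gives a map $g:Y\to X$ extending the identity on $A$. Then $g\circ f:X\to X$ and $\mathrm{id}_X$ both extend the identity on $A$, so by the uniqueness part they are homotopic relative to $A$; symmetrically $f\circ g\simeq \mathrm{id}_Y$ rel $A$, so $f$ is a homotopy equivalence rel $A$.

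There is no real obstacle here; the only point requiring care is verifying that $(X\times I, W)$ is genuinely a CW-pair so that the cellular extension argument applies in the uniqueness step, but this is immediate from the product CW-structure on $X\times I$ and the fact that $A$ is a subcomplex of $X$.
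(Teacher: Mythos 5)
Your proposal is correct and follows essentially the same route as the paper: an inductive cell-by-cell extension over the relative skeleta using contractibility of $Y$, the same extension argument applied to the pair $(X\times I,\, X\times\{0,1\}\cup A\times I)$ for the rel-$A$ homotopy, and the standard composition-of-extensions argument for the homotopy equivalence when $X$ is also contractible.
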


\begin{proof}
  As an inductive hypothesis, assume that $f$ is defined on $X^n\cup
  A$, the union of $A$ and the $n$-skeleton of $X$.  (The induction
  can be started with $n=-1$ and the convention that the $-1$-skeleton
  of $X$ is empty.)  It suffices to show that $f$ can be extended to
  each $(n+1)$-cell $\sigma$ of $X$ that is not contained in $A$.
  Since $Y$ is contractible, any map from an $n$-sphere to $Y$ extends
  to a map from the $(n+1)$-ball to $Y$.  Applying this statement to
  the map $f:\partial \sigma\rightarrow Y$ proves the claim.

  Given two maps $f,f':X\rightarrow Y$ that extend the identity
  map on $A$, construct a homotopy $H:X\times I\rightarrow Y$ by a similar
  inductive process.  Define $H(x,0)=f(x)$ and $H(x,1)=f'(x)$
  for all $x\in X$ and define $H(a,t)=a$ for all $t\in [0,1]$.
  Now suppose as an inductive hypothesis that $H$ is defined
  on $X\times\{0,1\}\cup A\times [0,1] \cup X^n\times [0,1]$.  
  It suffices to show that $H$ can be extended to $\sigma\times[0,1]$
  for each $(n+1)$-cell $\sigma$ of $X$.  But $H$ is already defined on
  the $(n+1)$-sphere
  $\partial(\sigma\times [0,1])= \sigma\times\{0,1\}\cup \partial
  \sigma\times [0,1]$, and since $Y$ is contractible this map can be extended to
  the $(n+2)$-ball $\sigma\times [0,1]$.

  For the second part, let $f:X\rightarrow Y$ and $g:Y\rightarrow X$
  be maps extending the identity on $A$.  Then $g\circ f:X\rightarrow
  X$ and the identity on $X$ both extend the identity on $A$, and so
  they are homotopic relative to $A$.  Similarly, $f\circ g$ and
  $1_Y$ are self-maps of $Y$ that extend the identity on $A$ so
  they are homotopic relative to~$A$.
\end{proof}

\begin{proof} (of Proposition~\ref{prop:allaresame}) By the lemma,
  for each component $\Gamma_i$ of $\Gamma$, there are maps
  $f_i:X_i\rightarrow Y_i$ and $g_i:Y_i\rightarrow X_i$ extending
  the identity on $\Gamma_i$.  There are also homotopies
  $g_i\circ f_i\sim 1_{X_i}$ and $f_i\circ g_i\sim 1_{Y_i}$ that
  fix $\Gamma_i$ throughout.  These maps and homotopies can be
  combined to give $f:X\rightarrow Y$ and $g:Y\rightarrow X$
  extending the identity map on $R_L$ and homotopies
  $g\circ f\sim 1_X$, $f\circ g\sim 1_Y$ that fix $R_L$ throughout.
  In more detail, the map $f$ is defined to be the identity on
  $R_L$ and to be $f_i$ on $X_i-\Gamma_i$.  The definitions of $g$
  and the homotopies are similar.  
\end{proof}

We are now ready to compare our graphical 2-complex with the ones
used in~\cite{ollivier,gruber}.  Both constructions, when applied
to a graphical presentation $(\Gamma,L,\tau,\phi)$ start with
the rose $R_L$.  For each component $\Gamma_i$ of $\Gamma$,
Ollivier and Gruber choose a family of (based) cycles $C_{i,j}$
that freely generate the fundamental group $\pi_1(\Gamma_i)$.
As subgraphs of $\Gamma$, these cycles are labelled via $\phi$,
and these maps are used as the attaching maps for 2-cells.  Unlike
our construction, this has the advantage that only ordinary 2-cells
are used, so they do not need to distinguish between the 1-skeleton
and the graphical 1-skeleton.  On the other hand, our construction
involves no choices.  

\begin{corollary}\label{cor:oursistheirs}
  For any graphical presentation $(\Gamma,L,\tau,\phi)$, our graphical
  presentation complex and the one used in~\cite{ollivier,gruber}
  are homotopy equivalent relative to $R_L$.
\end{corollary}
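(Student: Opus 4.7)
The plan is to apply Proposition~\ref{prop:allaresame} with a careful choice of the contractible complexes $X_i$ and $Y_i$ for each component $\Gamma_i$ of $\Gamma$.  For $X_i$ I would take our graphical 2-cell, namely the cone on $\Gamma_i$; this is contractible and manifestly contains $\Gamma_i$ as the subcomplex forming the base of the cone.  For $Y_i$ I would take the CW-complex obtained from $\Gamma_i$ by attaching one ordinary 2-cell along each cycle $C_{i,j}$ in the basis chosen by Ollivier and Gruber; this contains $\Gamma_i$ as its 1-skeleton.

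The main verification is that each $Y_i$ is contractible.  Since the cycles $C_{i,j}$ form a basis for the free group $\pi_1(\Gamma_i)$, attaching 2-cells along them kills the fundamental group, so $Y_i$ is simply connected.  Letting $V_i,E_i$ denote the numbers of vertices and edges of $\Gamma_i$ and $r_i=E_i-V_i+1$ the rank of $\pi_1(\Gamma_i)$, there are exactly $r_i$ 2-cells in $Y_i$, giving $\chi(Y_i)=V_i-E_i+r_i=1$.  For a 2-complex, $H_2$ is a subgroup of the free abelian cellular chain group $C_2$ and is hence free abelian; combined with $H_0=\zz$, $H_1=0$ and $\chi=1$, this forces $H_2(Y_i)=0$.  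Thus $Y_i$ is simply connected with the homology of a point, and so is contractible by Whitehead's theorem.

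Once this is in hand, the final step is to match the two pushout constructions of Proposition~\ref{prop:allaresame} with the two graphical presentation complexes.  Our complex is by definition obtained by attaching the graphical 2-cells $X_i$ to $R_L$ along $\phi$.  For the Ollivier--Gruber complex, the 2-cells of $Y_i$ are attached to $\Gamma_i$ along the $C_{i,j}$, and when $Y_i$ is glued to $R_L$ via the labelling map $\phi$ the edges of $\Gamma_i$ are identified with their images in $R_L$, so that the 2-cells become attached directly to $R_L$ along the labelled cycles $\phi(C_{i,j})$ --- exactly the description used in~\cite{ollivier,gruber}.  Proposition~\ref{prop:allaresame} then delivers a homotopy equivalence between the two complexes relative to $R_L$.

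The only real obstacle is verifying contractibility of $Y_i$; everything else amounts to bookkeeping about pushouts along the immersion $\phi$, and it is a pleasant feature of the argument that the choice of basis $\{C_{i,j}\}$ plays no role beyond the Euler characteristic count.
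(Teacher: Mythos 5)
Your proposal is correct and is essentially the paper's own proof: the same choices of $X_i$ (the cone on $\Gamma_i$) and $Y_i$ ($\Gamma_i$ with 2-cells attached along the $C_{i,j}$), the same two-stage reassembly of the Ollivier--Gruber complex, and the same appeal to Proposition~\ref{prop:allaresame}. The only difference is that you spell out the contractibility of $Y_i$ (Euler characteristic plus Whitehead), which the paper simply asserts; note that your count tacitly assumes each component $\Gamma_i$ is finite, whereas a cellular chain computation (the classes $[C_{i,j}]$ form a basis of $H_1(\Gamma_i)$, so $H_1(Y_i)=H_2(Y_i)=0$) covers the general case.
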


\begin{proof}
  Both constructions contain the rose $R_L$.  As described above,
  Gruber and Ollivier attach 2-cells to $R_L$ indexed by cycles
  $C_{i,j}$, where $\Gamma_i$ is a component of $\Gamma$ and
  for fixed $i$ the cycles $C_{i,j}$ form a free basis for
  $\pi_1(\Gamma_i)$.  The same space may be obtained by a
  2-stage process: first form a space $Y_i$ by attaching
  2-cells to $\Gamma_i$ along the cycles $C_{i,j}$, and then
  use $\phi$ to identify $\Gamma_i\subseteq Y_i$ with its image
  in $R_L$.  The fact that the cycles $C_{i,j}$ for fixed $i$
  form a free basis for $\pi_1(\Gamma_i)$ implies that $Y_i$
  is contractible.  Now define $X_i$ to be the cone on $\Gamma_i$.
  The hypotheses of Proposition~\ref{prop:allaresame} are satisfied
  with these choices of $X_i$ and $Y_i$, and the claimed result
  follows, since the $X$ in Proposition~\ref{prop:allaresame}
  is our graphical presentation 2-complex and the $Y$ is the
  one from~\cite{ollivier,gruber}.  
\end{proof}

The \emph{graphical Cayley complex} associated to a graphical
presentation $(\Gamma,L,\tau,\phi)$ is the universal covering
space of its graphical presentation complex.  The group
$G=G(\Gamma,L,\tau,\phi)$ 
presented by the graphical presentation acts freely on the
graphical Cayley complex.  The inverse image of the rose
$R_L$ in the universal covering space is by definition the
\emph{graphical $1$-skeleton} of the graphical Cayley complex.
The graphical 1-skeleton contains one free orbit of vertices
and its edges are labelled by $L$ in such a way that the action
of $G$ preserves the labels.  Thus the graphical 1-skeleton is
just the ordinary Cayley graph of $G$ with generators the
(inverse pairs of) elements of $L$.  
The graphical Cayley complex may be constructed by attaching
free $G$-orbits of graphical 2-cells to the graphical 1-skeleton.
In more detail, one free $G$-orbit of copies of the cone on
$\Gamma_i$ is attached for each component $\Gamma_i$ of $\Gamma$.

Having discussed graphical presentations and their graphical
presentation complexes, we move on to discuss the graphical
small cancellation condition.  

A \emph{piece} is a reduced word in $L$ 
that defines edge paths that start at at least two distinct
vertices of $\Gamma$.  
A piece is said to \emph{belong to} each component of $\Gamma$
that contains at least one of these vertices.  
The \emph{length} of a piece is the number of letters in the
word, or equivalently the length of the corresponding edge paths.

The \emph{girth} of a graph $\Gamma_i$ is the minimal length of any cycle in
$\Gamma_i$.  A graphical presentation satisfies the \emph{small cancellation
condition $C'(1/6)$} if the length of any piece is strictly less than
$1/6$ of the girth of each component to which the piece belongs.  We
are now ready to state the main theorem of graphical small
cancellation theory.  

\begin{theorem}\label{thm:mainofsc} 
  If the graphical presentation $(\Gamma,L,\tau,\phi)$
  satisfies $C'(1/6)$ then the associated graphical Cayley complex
  $X$ is contractible, and the attaching map for each graphical
  $2$-cell is an injection from a component of $\Gamma$ to the
  graphical $1$-skeleton of $X$.  Moreover, the Cayley graph
  $X^1$ has the `Dehn property': any closed loop in $X^1$ contains
  strictly more than half of some cycle of $\Gamma$ as a subpath.  
\end{theorem}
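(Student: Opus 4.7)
The standard route to a theorem of this type goes through van Kampen diagrams and a Lyndon--Gauss--Bonnet style combinatorial curvature argument. My plan is to prove a Greendlinger-type statement first and then to derive all three conclusions --- injectivity of the attaching maps, Dehn's property, and contractibility of $X$ --- as corollaries.

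First I would set up reduced van Kampen diagrams adapted to the graphical setting. A disk diagram $D$ over $(\Gamma,L,\tau,\phi)$ is a simply connected planar $2$-complex whose edges are labelled by $L$ in such a way that the boundary of each $2$-cell reads a closed $L$-word that lifts to a closed edge path in some component $\Gamma_i$. Adjacent $2$-cells meet along maximal arcs whose labels are pieces in the earlier sense, and reducedness is the usual condition forbidding cancellable pairs of adjacent faces. Any nullhomotopic word in the Cayley graph $X^1$ bounds a reduced disk diagram, and any class in $H_2(X)$ is represented by a reduced spherical diagram; this is the bridge between combinatorial geometry and the topological claims we want.

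The heart of the argument is a combinatorial curvature estimate. By $C'(1/6)$, every $2$-cell of $D$ whose boundary lifts into $\Gamma_i$ has perimeter at least the girth of $\Gamma_i$, while each piece occupying part of that boundary has length strictly less than one sixth of that girth, so every interior $2$-cell of $D$ has at least seven maximal pieces along its boundary. With the standard angle assignment (e.g.\ assigning angle $2\pi/n$ at a corner of a face whose boundary is partitioned into $n$ maximal arcs) interior $2$-cells then contribute strictly negative combinatorial curvature, and interior vertices of valence at least three contribute non-positive curvature. Combinatorial Gauss--Bonnet (total curvature $2\pi\chi$) therefore forces, in any reduced disk diagram with at least one $2$-cell, the existence of a boundary face --- a Greendlinger face --- more than half of whose boundary cycle lies on $\partial D$. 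Running exactly the same calculation on a sphere ($\chi=2$) yields a strict sign contradiction, so there are no non-degenerate reduced spherical diagrams, and hence $H_2(X)=0$.

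All three claims now drop out. Dehn's property is just the Greendlinger statement restated in $X^1$. For injectivity of the attaching map $\Gamma_i\to X^1$, any collision between distinct vertices of $\Gamma_i$ would produce two distinct $\Gamma_i$-paths with the same label; splicing them yields a nullhomotopic loop in $X^1$ whose reduced disk diagram contains a Greendlinger face bearing more than half of a cycle of some $\Gamma_j$, and one checks this is inconsistent with the fact that the two paths lie in a single component and overlap in at most a piece. Since $X$ is by construction simply connected and $2$-dimensional, the vanishing of $H_2(X)$ gives $\pi_2(X)=0$ by Hurewicz and hence contractibility of $X$ by Whitehead. The main obstacle in this plan is the precise execution of the curvature step: defining reducedness carefully enough that cancellable pairs of faces do not generate spurious pieces, setting up the angle assignment, and checking that the inequalities coming from $C'(1/6)$ are strict enough to rule out both a single-piece boundary face in the disk case and any non-trivial spherical diagram. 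This is the technical core of every proof of graphical small cancellation in the literature and the step in which I would expect to spend the most effort.
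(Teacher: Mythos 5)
Your plan is a genuinely different route from the one the paper takes: the paper does not reprove graphical small cancellation at all, but first shows (via the lemma on CW-pairs and Proposition~\ref{prop:allaresame}, leading to Corollary~\ref{cor:oursistheirs}) that its cone-on-$\Gamma_i$ presentation complex is homotopy equivalent rel $R_L$ to the complexes of Ollivier and Gruber, and then quotes Ollivier's Theorem~1 (with Remarks 3 and 21) for the Dehn property and Gruber's Theorem~2.18 and Lemma~4.1 for contractibility and injectivity. Your outline is essentially the internal proof that those sources carry out, so in principle it buys self-containedness at the cost of redoing their technical work; the paper's route buys brevity but needs the homotopy-equivalence step precisely because its graphical $2$-cells are cones on graphs rather than disks attached along a chosen free basis of $\pi_1(\Gamma_i)$.

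As a proof, though, two of your steps are under-specified at exactly the points where the literature has to work hardest. First, the injectivity of $\Gamma_i\to X^1$: your spliced loop is labelled by a word read along a path \emph{in $\Gamma_i$ itself}, so when the Greendlinger face produces more than half of a cycle of some $\Gamma_j$ as a subword, you cannot immediately conclude this is an illegally long piece --- a piece must occur at two \emph{distinct} vertices of $\Gamma$, and when $j=i$ the two occurrences may coincide. This is precisely why Ollivier needed the ``non-filamentous'' hypothesis and why Gruber's Lemma~4.1 (proved under the weaker $Gr(6)$ condition) requires a separate argument; ``one checks this is inconsistent'' does not yet bridge that gap. Relatedly, reducedness of diagrams in the graphical setting must be formulated as ``two faces meeting along an arc whose labels lift to the \emph{same} path of the \emph{same} component'' rather than the classical cancellable-pair condition, and the piece bounds for adjacent faces carrying the same graphical relator depend on this. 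Second, contractibility: the inference from ``no reduced spherical diagrams'' to $H_2(X)=0$ (or $\pi_2=0$) is not automatic, since diagram reduction does not obviously act transitively on representatives of a homology or homotopy class; one needs either a diagrammatic-reducibility/asphericity argument in the style of Gruber's Theorem~2.18, or Gersten-type machinery, and in addition your $X$ has cones on graphs as $2$-cells, so you must either compute with that cell structure directly or invoke something like the paper's Proposition~\ref{prop:allaresame} to pass to a complex with honest $2$-cells. The Greendlinger/curvature core you describe is standard and I have no doubt it can be executed, but as written the argument is a correct strategy with these two genuine gaps rather than a complete proof.
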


\begin{proof}
  By Corollary~\ref{cor:oursistheirs}, this statement will be true for
  our graphical presentation 2-complex if and only if it is true for
  the complex used in~\cite{ollivier,gruber}.  We shall describe how
  to deduce the above result from theorems in these two sources.  A
  version of this theorem is more readily seen in~\cite{ollivier}, but
  the version of parts of the theorem that can be found
  in~\cite{gruber} works with weaker hypotheses.  Note also that both
  \cite{ollivier}~and~\cite{gruber} make the statement that the
  graphical presentation complex is aspherical, rather than the
  equivalent statement that the graphical Cayley complex is
  contractible.

  Theorem~1 of Ollivier's article~\cite{ollivier} implies the above
  result, except for two points.  Firstly, because Ollivier is
  interested in hyperbolicity, Theorem~1 of~\cite{ollivier} includes
  the condition that $\Gamma$ should be finite.  This condition is
  removed in Remark~21 of~\cite{ollivier}, at the expense of also
  removing the conclusion that $G(\Gamma,L,\tau,\phi)$ is hyperbolic.
  Secondly, Theorem~1 of~\cite{ollivier} includes the extra hypothesis
  that $\Gamma$ should be `non-filamentous', i.e., every edge of
  $\Gamma$ should be contained in a non-trivial simple cycle.  This
  hypothesis is an artefact of the proof and is not needed; in
  remark~3~of~\cite{ollivier} it is asserted that this hypothesis is
  used only to ensure that components of $\Gamma$ inject into the
  graphical Cayley complex.  This part of the theorem is proved
  in~\cite{gruber} without this hypothesis.  

  Theorem~2.18~of~Gruber's article~\cite{gruber} proves that the
  graphical Cayley complex is contractible with our $C'(1/6)$
  condition replaced by the much weaker hypothesis $C(6)$ (no cycle is
  a union of six or fewer pieces).  Lemma~4.1 of~\cite{gruber} shows
  that the components of $\Gamma$ inject into the graphical 1-skeleton
  (which is just the Cayley graph of $G$ with respect to the
  generating set $L$).  Again, Lemma~4.1 of~\cite{gruber} uses the
  small cancellation property $Gr(6)$, which is even weaker than
  $C(6)$: see the discussion of these various conditions in and below 
  Definitions 1.2~and~1.3 of~\cite{gruber}.  Unfortunately for our
  purposes~\cite{gruber} does not discuss the Dehn property at all,
  but as stated above, this part of the theorem is proved
  in~\cite{ollivier}.
\end{proof}

\begin{remark} 
The reader who wishes to use only results concerning graphical
small cancellation that are stated in~\cite{ollivier} should note
that all of the graphs that we will use in the graphical presentation
for $H(S)$ will be non-filamentous in the sense of Ollivier provided
that the 1-skeleton $K^1$ of $K$ has no cut points.  This extra
hypothesis on $K$ does hold for the examples that we construct below.
\end{remark}

\begin{remark}
  We use the phrase `Dehn property' rather than `Dehn algorithm'
  because we are interested in the case when $\Gamma$ has infinitely
  many components, although each component will be a finite graph.
  In this case the Dehn property by itself does not suffice to produce
  an algorithm to decide whether a path in the Cayley graph is closed,
  i.e., to solve the word problem in the group.  The other ingredient
  needed is an algorithm to list all of the cycles in $\Gamma$ of
  at most a given length.  (This issue arises already in the classical
  case.)  
\end{remark} 

Before introducing the graphical presentation for the groups $H(S)$
that is central to our discussion, we discuss a well-known example
in which graphical small cancellation can be applied while classical
small cancellation does not apply directly.  

\begin{example}
  Let $T$ be a torus with a small open disc removed.  The fundamental
  group of $T$ is free of rank two, and with respect to a natural choice
  of generating set the bounding curve of the disc represents the
  commutator $[a,b]$ of free generators.  Now consider the space obtained
  by taking two copies of $T$ and identifying their boundary curves.
  The van Kampen theorem gives a presentation for the fundamental group
  of this space, which is of course a closed orientable surface of genus two: 
  \[\langle a,b,c,d\,\,:\,\,[a,b]=[c,d]\rangle.\]
  In the relator $[a,b][c,d]^{-1}$ the only pieces are single letters,
  and so this group presentation satisfies the $C'(1/6)$ condition
  (and even the $C'(1/7)$ condition).  The presentation 2-complex
  obtained by attaching the octagonal 2-cell to the rose with petals
  $a,b,c,d$ is homeomorphic to the given closed surface.  This surface
  admits a Riemannian metric of constant curvature $-1$, and for any
  such metric its universal covering space (which is the Cayley 2-complex
  for the given presentation) is identified with the
  hyperbolic plane $\hh$.  By choosing the metric as symmetrically as
  possible, one identifies the Cayley complex 
  with a tesselation of $\hh$ by regular hyperbolic
  octagons with interior angles $\pi/4$.  (Note that eight of these octagons
  will meet at each vertex of the tesselation.)  With respect to this metric,
  the circle where the two copies of $T$ were joined together represents
  a long diagonal of the octagon.  In the conical subdivision of the
  Cayley 2-complex, each regular octagon is replaced by eight
  isosceles triangles with angles $\pi/4,\pi/8,\pi/8$.  Each of the
  two copies of $T$ is made by identifying some of the sides of
  four of these triangles.  

  Now repeat the above, but instead of identifying the bounding circles
  of two copies of $T$, identify the bounding circles of \emph{three} copies.
  The fundamental group $G$ of the resulting space $X$ has the presentation
  \[G=\langle a,b,c,d,e,f\,\,:\,\,[a,b]=[c,d]=[e,f]\rangle.\]
  The classical small cancellation conditions fail dismally: there are
  three natural choices of octagonal relators, but any two of them
  intersect in a piece of length~4.  On the other hand, the symmetrical
  Riemannian metrics defined on each union of two copies of $T$ coincide
  on their intersections, yielding a locally CAT($-1$)-metric on $X$,
  so the fundamental group is hyperbolic.

  The solution is to take one graphical relator.  Take a graph $\Gamma$ 
  consisting of two vertices of valence three, joined by paths
  consisting of four edges; equivalently this is the barycentric
  subdivision of the complete bipartite graph $K(3,2)$.  If $x,y$
  are the two vertices of valence three, label the three paths
  from $x$ to $y$ by the three commutators $[a,b]$, $[c,d]$ and
  $[e,f]$.  In this single graphical relator, the pieces once
  again consist of single letters, and so the graphical $C'(1/6)$
  condition holds.  For this group, Ollivier's graphical presentation
  complex is obtained by attaching two of the possible octagons to
  the 6-petalled rose, whereas our graphical presentation complex
  is obtained by attaching the cone on $\Gamma$ to the 6-petalled
  rose.  In this case our graphical presentation 2-complex is
  homeomorphic to $X$, and even isometric to $X$ if we view the
  cone on $\Gamma$ as made from three halves of the regular octagon.
  The graphical Cayley complex is obtained from the Cayley graph of
  the group with respect to the generators $a,b,c,d,e,f$ by attaching
  a free $G$-orbit of cones on~$\Gamma$.

  Since $\Gamma$ is a simplicial graph, the conical subdivision of the
  graphical Cayley complex is a simplicial complex.  It has two free
  orbits of vertices.  Let $v$ be a lift of the vertex that covers the
  0-cell of the rose, and let $u$ be a lift of the cone vertex in the
  cone on $\Gamma$.  The link of the vertex $u$ is identified with
  $\Gamma$, so we may fix our choices in such a way that $v$ is
  adjacent to $u$ and furthermore the edge $\{u,v\}$ corresponds to
  the vertex $x\in \Gamma$.  There are~10 other vertices adjacent to
  $u$: these vertices are
  $av,abv,aba^{-1}v,[a,b]v,cv,cdv,cdc^{-1}v,ev,efv,efe^{-1}v$.  The
  group elements arising here are the elements represented by paths in
  $\Gamma$ from $x$ to another vertex.  The link of the vertex $v$ is
  larger: it contains twelve other vertices in the same orbit (the
  vertices $gv$ where $g$ is either a generator or the inverse of a
  generator) together with eleven vertices in the orbit of $u$.  Each
  2-simplex of the conical subdivision contains one vertex in the
  orbit of $u$ and two vertices in the orbit of $v$.  Realizing each
  2-simplex as a hyperbolic isosceles triangle with angles
  $\pi/4,\pi/8,\pi/8$ gives a $G$-equivariant CAT($-1$)-metric on the
  (conical subdivision of the) graphical Cayley complex, since
  Gromov's link condition~\cite[Ch.~II.5.24]{brihae} is easily
  verified.  The locally CAT($-1$)-metric that this induces on the
  quotient space is isometric to the locally CAT($-1$)-metric on $X$
  mentioned above.
  \end{example} 

Any graph $\Gamma$ admits a \emph{tautological labelling}, in which the
set $L$ is just the set of directed edges of $\Gamma$, the function
$\tau$ sends an edge to itself with the opposite orientation, and the
function $\phi$ is the identity map.  

Given a labelling of a graph $\Gamma$ and $n$ a non-zero
integer, the \emph{degree $n$ subdivision} of $\Gamma$ is the 
labelled graph obtained by subdividing each edge of $\Gamma$ into
$|n|$~parts.  If $n>0$, then the label attached to each of the $n$
new directed edges contained in the directed edge $e$ is $\phi(e)$,
whereas if $n<0$ the label attached to each new directed edge
contained in $e$ is $\tau\circ\phi(e)$.  It may be
helpful to imagine that the graph is rescaled by a factor of $|n|$,
so that the edges of the degree $n$ subdivision are `the same
length' as the edges of the original graph.  See Figure~\ref{fig:one}
for an example of a labelled graph and two of its subdivisions.  

We are now ready to define the graphical presentation for the groups
$H(S)$.

\begin{definition}\label{defn:hs}
Take a spectacular complex $K$, and take the tautological
labelling of its 1-skeleton $K^1$.
The generators for $H(S)=H(K,Z,S)$ are the
directed edges of $K$.  For each $n\in Z-S$ and for each polygon $P$
of $K$ the degree $n$ subdivision of the tautological labelling on
$\partial P$ is a graphical relator.  For each $n\in S$, the degree
$n$ subdivision of the tautological labelling on $K^1$ is a graphical
relator.  
\end{definition} 

\section{Using graphical small cancellation} 
\label{sec:smallc}

In this section we prove Theorem~\ref{thm:main}.  First we need
to establish that graphical small cancellation can be applied.  

\begin{proposition}\label{prop:hsissmallcanc} 
The given graphical presentation for $H(S)$ satisfies the 
small cancellation condition $C'(1/6)$.  
\end{proposition}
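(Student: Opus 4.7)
The plan is to classify pieces by the block decomposition of their word. Any reduced word $w$ labelling an edge path in $\Gamma$ factors uniquely as
\[ w = e_1^{a_1} e_2^{a_2} \cdots e_s^{a_s}, \]
where each $e_j$ is a directed edge of $K^1$, consecutive $e_j$ are distinct, and $e_{j+1} \neq \bar{e_j}$ by reducedness; thus $(e_1, \ldots, e_s)$ is a reduced walk in $K^1$. If $w$ labels a path starting at a vertex $v$ in the degree-$n$ subdivision of a relator $R \in \{\partial P, K^1\}$, then every $e_j$ lies in $R$, every interior block (i.e., $2 \le j \le s-1$) has $a_j = n$ exactly, and $v$ lies in the subdivision of $e_1$. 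Crucially, when $s \geq 2$ the first block must extend all the way to the original vertex at the head of $e_1$, so $v$ is uniquely determined by $a_1$; when $s = 1$ we have $w = e_1^{a_1}$ and $v$ can be any of the $n - a_1 + 1$ positions within $e_1$'s subdivision from which $a_1$ steps forward still stay inside.

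Suppose a piece $w$ has distinct starting vertices $v_1 \in C_1$ and $v_2 \in C_2$ in components of degrees $n_1$ and $n_2$. I split into three cases. In Case~I ($C_1 = C_2$) the uniqueness for $s \geq 2$ forces $s = 1$; then having two distinct valid starting positions requires $a_1 \leq n - 1$, so $|w| \leq n - 1$. In Case~II ($C_1 \neq C_2$, $n_1 = n_2 = n$), since $S$ and $Z - S$ are disjoint, both components must be degree-$n$ subdivisions of distinct polygon boundaries $\partial P_1, \partial P_2$; the reduced walk $(e_1,\ldots,e_s)$ lies in $\partial P_1 \cap \partial P_2$ and stays within a single arc (a reduced walk in a disjoint union of arcs cannot jump components). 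By the $C'(1/6)$ condition on $K$ this arc has strictly fewer than $\min(l_{P_1},l_{P_2})/6$ edges, giving $|w| \leq ns < n\cdot\min(l_{P_1},l_{P_2})/6$. In Case~III ($n_1 \neq n_2$), any interior block would need length both $n_1$ and $n_2$, so $s \leq 2$, giving $|w| \leq 2\min(n_1,n_2)$.

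The final step is to verify $|w| < \mathrm{girth}(C_i)/6$ in each case. The girth of $C_i$ equals $n_i l_{P_i}$ or $n_i g$, and the defining inequalities $g \geq 13$ and $l_P > 2g$ (so $l_P \geq 27$) supply the needed slack. Case~II is immediate by construction, since $\min(l_{P_1},l_{P_2})/6 \leq l_{P_i}/6$. Case~I gives $n - 1 < ng/6$ using $g \geq 13$. The tightest check is Case~III, which requires $2 n_i < n_i g/6$, i.e. $g > 12$, holding by assumption.

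The main obstacle is spotting the uniqueness statement for $s \geq 2$ in Case~I, which is what confines same-component pieces to a single edge subdivision and produces the sharp bound $|w| \leq n - 1$. Once this is in hand, the argument becomes a routine combination of the block structure, the $C'(1/6)$ condition on $K$, and the girth lower bounds built into the definition of a spectacular complex.
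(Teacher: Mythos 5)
Your block-decomposition argument follows essentially the same route as the paper's proof (same-component pieces are proper powers of a single letter, hence of length at most $|n|-1$; equal-degree pieces between distinct polygon boundaries are controlled by condition~6 of Definition~\ref{defn:kdefn}; pieces between relators of different degrees have at most two blocks), and your Cases I and II are correct. The genuine gap is that you have silently assumed all degrees are positive, and this matters in Case III. The set $Z$ is an arbitrary infinite subset of $\zz-\{0\}$, and in the degree $n$ subdivision with $n<0$ the sub-edges of a directed edge $e$ carry the label $\bar{e}$; in particular the paper's main example $Z=\zz-\{0\}$ contains both $n$ and $-n$. A piece shared by components of degrees $n$ and $-n$ lands in your Case III, but there the obstruction ``an interior block would need length both $n_1$ and $n_2$'' is vacuous: a block of a fixed word has only an unsigned length, and $|n|=|-n|$, so nothing in your argument rules out $s\geq 3$, and no bound on $|w|$ is obtained for this pair. (Smaller symptoms of the same oversight: interior blocks have length $|n|$, not $n$, and your bounds should read $|n|-1$, $|n|s$ and $2\min(|n_1|,|n_2|)$.)

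The missing ingredient is the paper's observation for degrees of opposite sign. If $w$ contains two distinct blocks $x^{a}y^{b}$ and is readable in a degree-$m$ subdivision with $m>0$, the transition between the blocks happens at an original vertex which is the head of $x$ and the tail of $y$; readability in a degree-$n$ subdivision with $n<0$ forces that vertex to be the tail of $x$ and the head of $y$, because the labels there are reversed. Together these conditions make the underlying edges of $x$ and $y$ a bigon, which is impossible since $K^1$ is simplicial with girth at least $13$. Hence a piece belonging to relators of degrees of opposite sign consists of a single block, of length at most $\min(|n_1|,|n_2|)$, which is comfortably less than one sixth of both girths. With this extra case added (and absolute values inserted throughout), your proof is complete and coincides with the one in the paper.
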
 

\begin{proof} 
  This is a simple check.  
The shortest cycle in the degree $m$ subdivision of $K^1$ is of 
length $g|m|\geq 13|m|$, and the shortest cycle in the degree $m$ 
subdivision of the polygon boundary $\partial P$ is of length $l_P|m|> 
26|m|$.  Now suppose that $m\neq n$ are non-zero integers with 
$|m|<|n|$.  If $mn>0$, then 
the longest pieces contained in both a degree $m$ subdivision 
and a degree $n$ subdivision are of the form $a^mb^m$, where either $(a,b)$ 
or $(b,a)$ is a pair of consecutive edges in $K^1$.  If on the other 
hand  $mn<0$, then the longest pieces are of the form $a^m$.  

Between the degree $m$ subdivision of either $K^1$ or a polygon boundary
$\partial P$ and itself, the longest pieces are of the form $a^{|m|-1}$, 
of length $|m|-1$.  The longest pieces between the degree $m$ 
subdivisions of distinct polygon boundaries $\partial P$ and $\partial Q$ 
are potentially much longer, of length $|m|$ times the length of a
piece of $\partial P\cap \partial Q$.  But since the polygons of $K$
satisfy the $C'(1/6)$ condition, it follows that each piece of the
intersection of the degree $m$ subdivisions of $\partial P$ and
$\partial Q$ has length strictly less than both $|m|l_P/6$ and $|m|l_Q/6$.  
\end{proof} 

For $S\subseteq Z$, let $E(S)$ denote the standard graphical 
Cayley 2-complex for $H(S)$.  By Proposition~\ref{prop:hsissmallcanc}
and Theorem~\ref{thm:mainofsc} we see that $E(S)$ is contractible,
with a free action of $H(S)$, and hence that $E(S)/H(S)$ is an
Eilenberg--Mac~Lane space for $H(S)$.  Similarly, $E(T)/H(T)$ is
an Eilenberg--Mac~Lane space for $H(T)$.

For the proof of 
Theorem~\ref{thm:main} we will need to compare, for 
$S\subseteq T\subseteq Z$, two free $H(T)$-complexes: the 
standard complex $E(T)$ and the quotient $E(S)/K_{S,T}$.
(We remind the reader that $K_{S,T}$ is by definition the kernel
of the natural surjective homomorphism $H(S)\rightarrow H(T)$;
for any complex $E$ with an $H(S)$-action the quotient $E/K_{S,T}$
admits an $H(T)$-action.)

Since we know that $E(S)/H(S)$ is an Eilenberg--Mac~Lane space for
$H(S)$, the general theory of covering spaces tells us that 
$E(S)/K_{S,T}$ is an Eilenberg--Mac~Lane 
space for $K_{S,T}$ which admits a free cellular action of $H(T)$.
Define the graphical 1-skeleton of $E(S)/K_{S,T}$ to be the
image of the graphical 1-skeleton of $E(S)$, and define a
graphical 2-cell of $E(S)/K_{S,T}$ to be the image of a
graphical 2-cell of $E(S)$.  With this definition, the
graphical 1-skeleton of each of $E(T)$ and $E(S)/K_{S,T}$ is
the Cayley graph of $H(T)$ with respect to the same generating
set.  Thus these two free $H(T)$-spaces have the same graphical
1-skeleta.  They also share many of the same graphical 2-cells.  The
difference between them is that for each $n\in T-S$, 
$E(S)/K_{S,T}$ contains a free orbit of
polygonal 2-cells attached along the degree $n$ subdivision of $\partial
P$ for each polygon $P$ of $K$, whereas $E(T)$ contains a free orbit
of graphical 2-cells attached along the degree $n$ subdivision of the
graph $K^1$ itself.  It may be seen that the attaching map for each
graphical 2-cell is injective.  For the graphical 2-cells of $E(T)$
this follows directly from the small cancellation properties of the
presentation for $H(T)$.  For the 2-cells that belong only to $E(S)/K_{S,T}$,
i.e., the cells attached along the degree $n$ subdivision of $\partial P$
for $n\in T-S$, this follows from the fact that $\partial P$ maps injectively
into $K^1$, and the degree $n$ subdivision of $K^1$ is already known to
map injectively into the graphical 1-skeleton.  

There is a 3-dimensional $H(T)$-complex $F$ that
contains both $E(S)/K_{S,T}$ and $E(T)$ as subcomplexes.  The fundamental
theorem of graphical small cancellation tells us that inside $E(T)$ there
is a free $H(T)$-orbit of copies of cones on the degree $n$ subdivision
of $K^1$, for each $n\in T-S$.  These copies of the degree $n$ subdivision
of $K^1$ are of course also present inside $E(S)/K_{S,T}$ since the
graphical 1-skeleta are equal.  However, in $E(S)/K_{S,T}$,
instead of having a single cone vertex attached to each such copy,
there is a specific subcomplex $K_0$ homeomorphic to $K$, which has
this copy of the degree $n$ subdivision of $K^1$ as its 1-skeleton,
and in which each polygon $P$ of $K$ is attached to the degree $n$
subdivision of $\partial P$.  To make $F$ from $E(S)/K_{S,T}$ 
attach a free $E(T)$-orbit of cones on $K$ to the $E(T)$-orbit
of $K_0$.  
The inclusion of $K^1$ into $K$ induces an embedding
of the cone $C(K^1)$ on $K^1$ into the cone $C(K)$, and hence an
$H(T)$-equivariant inclusion $E(T)\rightarrow F$.  

The reader may prefer to understand the construction in terms of
the conical subdivisions of $E(T)$, $F$ and $E(S)/K_{S,T}$, which
are simplicial complexes with free $H(T)$-actions.  The graphical
1-skeleton of $E(T)$ and of $E(S)/K_{S,T}$ is the cover of the
rose $R_L$: it has one free orbit of vertices, with say $v$ as
an orbit representative.  The directed edges between vertices in this
orbit are labelled by elements of $L$, so that each vertex in
this orbit is adjacent to $|L|$ other vertices in this orbit.
For each $n\in S$, both $E(S)/K_{S,T}$ and $E(T)$ have another
orbit of vertices with orbit representative $u_n$.  The link
of each vertex in the orbit of $u_n$ contains only 
vertices in the orbit of $v$, and is a copy of the degree
$n$ subdivision of $K^1$.  For each $n\in Z-T$ and each polygon
$P$ of $K$, both $E(S)/K_{S,T}$ and $E(T)$ have an orbit of
vertices with orbit representative $x_{n,P}$.  The link of
each vertex in this orbit again contains only vertices in
the orbit of $v$, and is a copy of the degree $n$ subdivision
of $\partial P$.  The difference between $E(S)/K_{S,T}$ and
$E(T)$ arises for the vertices corresponding to $n\in T-S$.
For $n\in T-S$, $E(T)$ contains an orbit of vertices of type
$u_n$, with link a copy of the degree $n$ subdivision of $K^1$,
whereas $E(S)/K_{S,T}$ instead contains orbits of vertices of type
$x_{n,P}$ for each polygon $P$ of $K$,
whose links are copies of the degree $n$ subdivision of
$\partial P$.  In these terms, the conical subdivision of the
complex $F$ is obtained by
taking \emph{both} of these types of vertices: for $n\in T-S$,
the complex $F$ contains the vertices $x_{n,P}$ for each polygon
$P$, together with the vertices $u_n$.  The link of each $u_n$
vertex here contains exactly one vertex in the orbit of $x_{n,P}$,
as well as a number of $v$-vertices.  The link of each vertex in the
$u_n$ orbit for $n\in T-S$ is thus a 2-complex isomorphic to
the conical subdivision of the complex obtained from $K$ by
taking the degree $n$ subdivision of its 1-skeleton.

\begin{proposition} \label{prop:mainarg}
With notation as above, the inclusion $E(S)/K_{S,T}\rightarrow F$ is 
an $H(T)$-equivariant homology isomorphism and the inclusion
$E(T)\rightarrow F$ is an $H(T)$-equivariant homotopy equivalence,
whose image is an $H(T)$-equivariant deformation
retract of $F$.  
\end{proposition}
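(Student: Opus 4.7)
The plan is to prove the two claims separately, each exploiting the acyclicity of $K$ together with the cone structure used to construct $F$.

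For the deformation retraction $E(T)\hookrightarrow F$, I would give an explicit equivariant retraction via elementary collapses. A cell-by-cell comparison of $E(T)$ and $F$ reveals that the cells of $F$ not lying in $E(T)$ are, for each $n\in T-S$ and each element of the free $H(T)$-orbit of attached copies of $C(K)$: for each polygon $P$ of $K$, the $2$-cell $P$ itself (already present as a polygonal $2$-cell in $E(S)/K_{S,T}$), together with the $3$-cell $C(P)$ (coming from the attached $C(K)$). The $3$-cells of $F$ are precisely the various cones $C(P)$, each polygon $P$ appears on the boundary of a unique such cone, so $P$ is a free face of $C(P)$ and the pair $(P, C(P))$ admits an elementary collapse. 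These collapses involve pairwise disjoint cells and lie in disjoint free $H(T)$-orbits, so they can all be performed simultaneously and equivariantly, yielding the required equivariant deformation retraction of $F$ onto $E(T)$.

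For the homology isomorphism $E(S)/K_{S,T}\hookrightarrow F$, I would invoke the long exact sequence of the pair $\bigl(F,\, E(S)/K_{S,T}\bigr)$. By construction $F$ is obtained from $E(S)/K_{S,T}$ by attaching, for each $n\in T-S$, a free $H(T)$-orbit of copies of $C(K)$ along the corresponding copies of $K$. Excision yields
\[ H_*\bigl(F,\, E(S)/K_{S,T}\bigr) \;\cong\; \bigoplus H_*\bigl(C(K),\,K\bigr) \]
as a $\zz[H(T)]$-module, with the sum running over the free orbits of attached cones. Since $C(K)$ is contractible and $K$ is acyclic by condition $(7)$ of Definition~\ref{defn:kdefn}, the long exact sequence of the pair $(C(K), K)$ gives $H_*(C(K), K) = 0$ for all $*$. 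The long exact sequence of $\bigl(F,\, E(S)/K_{S,T}\bigr)$ then implies that the inclusion induces an isomorphism on integral homology; equivariance is automatic from the naturality of the sequence.

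The main technical obstacle is selecting a CW structure on $F$ in which the elementary collapses are evidently valid: in particular, one in which each polygon $P$ of $K$ (for $n\in T-S$) is a face of exactly one $3$-cell. The natural choice is to keep the original cells of $K$ (vertices, edges, polygons) and, when forming $C(K)$, to add one new cell of dimension $d+1$ for each $d$-cell of $K$. In this CW structure the $3$-cells of $F$ are precisely the cones $C(P)$ on polygons, and the local analysis near any chosen orbit representative transfers to all free $H(T)$-translates without change.
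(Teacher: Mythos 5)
Your proposal is correct and follows essentially the same route as the paper: the homology claim is the observation that $F$ is obtained from $E(S)/K_{S,T}$ by coning off acyclic copies of $K$ (your excision/long-exact-sequence argument just spells this out), and your simultaneous equivariant collapses of the pairs $(P,C(P))$ are exactly the paper's deformation retractions of each cone $C(P)$ onto $C(\partial P)$, assembled into a retraction of $C(K)$ onto $C(K^1)$ over all attached cones.
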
 

\begin{proof}
The first claim follows because attaching a cone to an acyclic
subspace does not change homology.  For the second claim, note that
for a polygon $P$, the cone $C(\partial P)$ on $\partial P$ is a
deformation retraction of the cone $C(P)$.  Putting these deformation
retractions together, it follows that the cone $C(K^1)$ on
$K^1$ is a deformation retraction of the cone $C(K)$ on $K$.  Applying
this retraction simultaneously over all such cones appearing in $F$, 
it follows that $E(T)$ is an $H(T)$-equivariant deformation
retraction of $F$ as claimed.
\end{proof} 

\begin{proposition}\label{prop:hpinjects}
  For $P$ a polygon of $K$ with directed bounding cycle $a_1,\ldots,a_l$,
  let $H'_P$ be the group defined by the presentation
  \[H'_P=\langle a_1,\ldots,a_l\,\,:\,\, a_1^n\cdots a_l^n,\,\,n\in Z\rangle.\]
  The inclusion of generating sets induces an injective homomorphism
  $H'_P\rightarrow H(S)$ for each $S\subseteq Z$.  In particular, this
  map gives an isomorphism from $H'_P$ to $H_P(S)$, the subgroup of $H(S)$
  generated by the edges of $P$.  
\end{proposition}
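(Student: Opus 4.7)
The plan is to verify well-definedness and then prove injectivity by induction on word length, using the Dehn property from graphical small cancellation.  Well-definedness is immediate: for every $n\in Z$ the word $a_1^n a_2^n \cdots a_l^n$ is trivial in $H(S)$, either because when $n\in Z-S$ it labels the cycle of the graphical relator ``degree $n$ subdivision of $\partial P$'', or because when $n\in S$ the same cycle is a simple subcycle of the graphical relator ``degree $n$ subdivision of $K^1$'' and bounds a disc in the corresponding graphical $2$-cell via the cone point.

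For injectivity, take a non-empty reduced word $w$ in $\{a_1^{\pm 1},\ldots,a_l^{\pm 1}\}$ that is trivial in $H(S)$.  Since $H(S)$ has the $C'(1/6)$ property established in the previous proposition, Dehn's property provides a contiguous subword of $w$ that labels more than half of some simple cycle $C$ of a component of the labelled graph $\Gamma$.  I induct on $|w|$, aiming to produce a strictly shorter word in the same generators that is equal to $w$ modulo the $H'_P$-relators.  In the good case $C$ is the degree $n$ subdivision of $\partial P$ itself---either as a whole component when $n\in Z-S$, or as a simple subcycle of the degree $n$ subdivision of $K^1$ when $n\in S$---and the Dehn substitution of that subword by the shorter complementary arc around $C$ is literally an application of the relator $a_1^n a_2^n \cdots a_l^n$ of $H'_P$.

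The other cases I rule out.  If $C$ is the degree $n$ subdivision of $\partial Q$ for a polygon $Q\neq P$, then every label on the subword lying in $\{a_i^{\pm 1}\}$ forces the edges of $\partial Q$ projected from the subword into $\partial P\cap\partial Q$, and contiguity places them in a single component of that intersection.  That component would then have length greater than $l_Q/2$, contradicting the $C'(1/6)$ bound $l_Q/6$ from Definition~\ref{defn:kdefn}(6).

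The main obstacle is the remaining case, where $C$ is the degree $n$ subdivision of a simple cycle $D\neq\partial P$ of $K^1$ for some $n\in S$, since no $C'(1/6)$ bound is built in for pairs (simple cycle, polygon boundary).  The same projection argument produces a contiguous arc $\alpha\subseteq\partial P\cap D$ of length greater than $|D|/2$.  My plan is to use the remaining spectacular conditions---girth at least $13$, edge paths between branch vertices of length at least $5$, $l_P>2g$, and acyclicity of $K$---to rule this out: the reduced loop $(D\smallsetminus\alpha)\cdot(\partial P\smallsetminus\alpha)^{-1}$ represents $[D]-[\partial P]$ in $H_1(K^1)$, has length at least $g$, and is expressible using polygon boundaries whose perimeters exceed $2g$.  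If this numerical argument does not close the gap, the fallback is to use the cone of the graphical $2$-cell for the degree $n$ subdivision of $K^1$ to reroute the Dehn substitution along $\partial P\smallsetminus\alpha$ rather than $D\smallsetminus\alpha$, thereby realising it as an application of the $H'_P$-relator $a_1^n a_2^n \cdots a_l^n$.
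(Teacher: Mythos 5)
Your reduction of the problem to the Dehn property is the right starting point, and your treatment of the case $C=\partial Q$ (degree $n$ subdivision, $Q\neq P$) via condition~6 is fine. But the case you flag as ``the main obstacle'' is a genuine gap, and it is exactly where the content of the proposition lies. That case cannot be ruled out: nothing in Definition~\ref{defn:kdefn} prevents a simple cycle $D$ of $K^1$ from consisting of a long arc $\alpha$ of $\partial P$ together with a short path of $K^1-P$ joining two branch vertices of $\partial P$; indeed, since $l_P>2g$, such ``shortcuts'' across $P$ are expected, and the Dehn property applied to a word in the $a_i$ may well return more than half of the degree $n$ subdivision of such a $D$ rather than of $\partial P$. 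Your homological remark (that $(D\smallsetminus\alpha)\cdot(\partial P\smallsetminus\alpha)^{-1}$ represents $[D]-[\partial P]$ and has length at least $g$) contradicts nothing, so the ``numerical argument'' cannot close the gap. The fallback also fails as stated: your induction is on word length, but rerouting the substitution along $\partial P\smallsetminus\alpha$ replaces a subword of length greater than $n|D|/2$ (which may be barely more than $ng/2$) by an arc of length roughly $n(l_P-|\alpha|)$, and since $l_P>2g$ this is typically \emph{longer} than what it replaces, so the length-decreasing induction collapses precisely in the case you need it.

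The paper's proof does not attempt a length induction at all at this point. Instead it works with two notions of reduced word ($H'_P$-reduced and $H(Z)$-reduced), decomposes an $H'_P$-reduced word into maximal pieces with a well-defined degree $m$, and replaces each maximal degree $m$ piece admitting a shortcut by its $H(Z)$-reduction. The key input is condition~3 of Definition~\ref{defn:kdefn} (unbranched paths have length at least~$5$): any shortcut runs between branch vertices, so the $H(Z)$-reduction of such a piece contains a subword $r^ms^mt^mu^mv^m$ with $r,s,t,u,v$ consecutive edges of $K^1$ not in $P$. This pins down the degree and the endpoints of each reduced piece and allows one to check that adjacent reduced pieces cannot merge or cancel down (at worst one power of a letter is lost at a junction), so the outcome is a non-empty $H(Z)$-reduced word, which is non-trivial by the Dehn property. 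If you want to salvage your approach you would need an argument of this normal-form type (or a different well-founded complexity that decreases under shortcut substitutions); as written, the proposal establishes well-definedness and two easy cases but leaves the essential case unproved.
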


\begin{proof}
  The relators in the given presentation for $H'_P$ all hold between
  the corresponding generators of $H(S)$, so there is a homomorphism
  $H'_P\rightarrow H(S)$ as claimed.  It remains to show that this
  homomorphism is injective.  Since $H(S)$ maps surjectively onto
  $H(Z)$, it clearly suffices to show the claim in the case $S=Z$.
  The given presentations for $H'_P$ and $H(S)$ satisfy the $C'(1/6)$
  property, so have the Dehn property.  To spell this out in greater
  detail, we recall that a word in the generators of a group (and
  their inverses) is said to be \emph{reduced} if it contains no
  subword of the form $aa^{-1}$ or $a^{-1}a$.  The Dehn property for
  the two given presentations is the following: any reduced word in
  the generators of $H'_P$ that is equal to the identity in $H'_P$
  contains strictly more than half of one of the defining relators as
  a subword, and any reduced word in the generators of $H(S)$ that is
  equal to the identity in $H(S)$ contains strictly more than half of
  the word spelt around a simple cycle in one of the defining
  graphical relators as a subword.

  Say that a word in the generators for $H'_P$ is \emph{$H'_P$-reduced}
  if it is reduced and does not contain more than half of any
  defining relator as a subword.  By induction on word length, the
  Dehn property shows that any word in the generators for $H'_P$
  represents the same element of $H'_P$ as some $H'_P$-reduced word. 
  Similarly, say that a word in
  the generators for $H(Z)$ is \emph{$H(Z)$-reduced} if it is
  reduced and does not contain more than half of any simple cycle
  in any of the labelled graphs used to define $H(Z)$ as a subword.
  Once again, applying the Dehn property and induction one sees that
  every word in the generators for $H(Z)$ represents the same group
  element as some $H(Z)$-reduced word.
  
  It suffices to show the image in $H(Z)$ of any non-trivial
  $H'_P$-reduced word is not the identity.  Not every $H'_P$-reduced
  word will be $H(Z)$-reduced, which complicates the argument
  considerably; however we claim that every
  non-trivial $H'_P$-reduced word is equal as an element of $H(Z)$
  to some non-trivial $H(Z)$-reduced word.  

  For the remainder of this proof, we temporarily redefine a \emph{piece} to
  be a reduced word that defines a path in at least \emph{one}
  of the (graphical) relators for the given presentation for $H(Z)$.
  Our justification for this term is that we will be considering
  pieces that arise as subwords of our given $H'_P$-reduced word, so
  the pieces that we will consider will still appear twice, once in a
  relator and once in our given word.

  Suppose that a pair $a,b$ of distinct directed edges of $K$ have
  exactly one vertex in common.  After possibly replacing each
  of the edges by its opposite, we may suppose that the terminal
  vertex of $a$ is equal to the initial vertex of $b$.  In this
  case the word $a^lb^m$ with $l,m\neq 0$ is a piece of the
  degree $n$ subdivision of $K^1$ if and only if $lm>0$,
  $mn>0$ and $|n| \geq |l|$, $|n| \geq |m|$.  This word is a
  piece of the degree $n$ subdivision in a unique way, since
  the division between the $a$-edges and the $b$-edges can
  only appear at one vertex.  

  Moving on to words in three letters, if $a,b,c$ is a directed
  edge path in $K^1$, and $l,m,n\in \zz-\{0\}$, then $a^lb^mc^n$
  can only appear as a piece of the degree $m$ subdivision of
  $K^1$, and will do so precisely when all of the following
  hold: $lm>0$; $mn>0$; $|l|\leq |m|$; $|n|\leq |m|$.
  Although we made the above statements for the subdivision of
  $K^1$, if the directed edges $a,b,c$ are contained in $P$ then
  the same statements hold for pieces of the subdivision of $P$.
    
  Given an $H'_P$-reduced word $w$, we consider how it breaks up into
  maximal pieces.  For example, if $w=ua^nv$ where the final letter of
  $u$ and the initial letter of $v$ have no vertex in common with $a$
  (when viewed as directed edges of $K$), 
  then $a^n$ is a maximal piece.  If a piece contains a subword
  $a^lb^mc^n$, where $a,b,c$ is a directed edge path in $P$ with
  $l,m,n\neq 0$, this piece can occur only in the degree $m$
  subdivision of $P$; in this case we call it a \emph{piece of degree} $m$.
  Short pieces, by which we mean pieces that consist of either a power
  of a single letter or a product of two powers of single letters, do
  not have a well-defined degree.  
  The maximal pieces of $w$ may intersect non-trivially.  If $m,n>0$
  with $m\neq n$ the intersection of a maximal piece of degree $m$
  and a maximal piece of degree $n$ may have length up to $2\min\{m,n\}$,
  with the worst case represented by $\ldots x^my^mz^ma^nb^nc^n\ldots$,
  where $x,y,z,a,b,c$ are consecutive directed edges (i.e., they form a
  directed edge path).
  If $m>0$ the intersection of two distinct maximal pieces of degree $m$
  can be at most length $m-1$, with the worst case represented by
  $\ldots x^my^mz^ma^{m+1}b^mc^m\ldots $ or by
  $\ldots x^my^mz^ma^{m-1}b^mc^m\ldots $.  These considerations show
  that it is possible for a maximal piece of well-defined degree to
  be entirely covered by its neighbours, but that the longest such
  maximal pieces consist of powers of at most four letters: if
  $m>0$ and $l,n>m$ then the word $\ldots x^ly^lz^ma^mb^nc^n\ldots$
  contains the maximal piece $y^mz^ma^mb^m$ which is entirely covered
  by the neighbouring pieces of degrees $l$~and~$n$.  In general, the
  length of 
  the intersection is not the important feature: what is crucial to
  our argument is that the intersection of maximal pieces of distinct
  degrees consists of at most two powers of letters, and that the
  intersection of two distinct maximal pieces of the same degree
  consists of a power of a single letter.  

  By definition, if $w$ is an $H'_P$-reduced word then $w$
  contains no degree $m$ pieces of length greater than $(m/2)l_P$.
  However, such a word will not necessarily be $H(Z)$-reduced: a
  piece of degree $m$ that is $H'_P$-reduced may be further
  shortened using a `shortcut' across the polygon $P$ that contains
  edges from $K^1$ that are not in $P$.  
  Such a shortcut will necessarily go between distinct
  vertices of $K^1$ of valence at least three.  If there is a maximal 
  degree $m$ piece in $w$ that is $H'_P$-reduced but not $H(Z)$-reduced,
  we replace it by its $H(Z)$-reduction.  Since this reduction involves
  a path between two vertices of $K^1$ of valence at least three that
  is not contained in $P$, condition~3 of Definition~\ref{defn:kdefn}
  implies that the $H(Z)$-reduction of this
  piece contains a subword of the form $r^ms^mt^mu^mv^m$, where $r,s,t,u,v$
  are consecutive directed edges of $K^1$ that are not contained in $P$.
  In particular, this piece can only appear in the degree $m$ subdivision
  of $K^1$, it cannot be entirely covered by its two neighbours, 
  and it has well-defined endpoints (vertices of the degree
  $m$ subdivision of $K^1$) that are equal to those of the piece that
  it replaced.  Provided that any two pieces that are long enough to
  support shortcuts are separated by smaller pieces, this shows that
  there can be no further cancellation once each piece with a
  shortcut has been reduced in this way.  

  The remaining difficulty is the case when two longer pieces both
  admitting shortcuts are adjacent, including the case when they
  overlap.  If these pieces have the same
  degree, $m$ say, then they are not compatible, in the sense that
  when they are both fitted into the degree $m$ subdivision of $K^1$,
  the intersection of their images there will not be precisely equal
  to their intersection in $w$; if this was not the case, then they
  would combine to form a single piece of degree $m$.  But now the
  same property also holds for their $H(Z)$-reductions, and so their
  reductions cannot be combined into a longer piece.  Note also that
  the intersection of two such pieces consists of at most a power of
  a single letter.  It remains to
  consider the case when two long pieces corresponding to different
  $m$ and $n$ are adjacent.  In this case, it is possible for their
  endpoints to match up and some cancellation may take place at
  their boundaries.  Viewing the terminal vertex of the first
  piece as a vertex of the degree $m$ subdivision of $P\subseteq K^1$
  and the initial vertex of the second piece as a vertex of the
  degree $n$ subdivision of $P\subseteq K^1$, the only potential
  problem is when these two vertices correspond to a single vertex
  of $K^1$ of valence at least three, as opposed to corresponding
  to vertices of the subdivision that appear somewhere in the middle
  of an edge of $K^1$.  Again, no reduction will occur unless 
  the same edge of $K^1-P$ appears with opposite sign at these
  ends.  For example, the first piece might end
  $r^ms^mt^mu^mv^m$ and the second piece might begin
  $v^{-n}u^{-n}t^{-n}s^{-n}r^{-n}$, with $m,n>0$.
  Even in this case, since $m\neq n$
  at most a power of one letter is lost, either from the end of the
  first piece (if $m<n$) or from the beginning of the second piece
  (if $m>n$), and so the first piece still has degree~$m$ and the
  second piece has degree~$n$.  Thus no two adjacent long pieces can
  combine to form a longer piece, and the reduction (in the free group)
  of the word that they spell together is $H(Z)$-reduced and non-trivial.
\end{proof} 

\begin{remark}
  The reader may prefer an alternative account of the above proof in
  terms of the simplicial complexes that are the conical subdivisions
  of the graphical Cayley complexes for $H'_P$ and for $H(Z)$.  The
  graphical 1-skeleton of this complex is the Cayley graph of the
  group $H'_P$ (resp.~$H(Z)$) with the directed edges of $P$
  (resp.~$K$) as generators.  For each $n\in Z$ there is an orbit of
  2-cells (resp.~graphical 2-cells) attached, with boundary the degree
  $n$ subdivision of $\partial P$ (resp.~the degree $n$ subdivision of $K^1$).
  In the conical subdivision this gives rise to an extra orbit of
  vertices for each $n\in Z$.  Let $v'$ (resp.~$v$) be an orbit
  representative of the vertices in the graphical 1-skeleton, and for
  $n\in Z$ let $u'_n$ (resp.~$u_n$) be an orbit representative of the
  vertices at the centre of the degree $n$ subdivision of $\partial P$
  (resp.~at the cone point in the cone on the degree $n$ subdivision
  of $K^1$).

  In this simplicial complex, a word is represented by an edge path
  that stays in the graphical 1-skeleton (i.e., that only passes
  through vertices in the orbit of $v'$ (resp.~$v$)).  A reduced word
  is such an edge path that never reverses its direction.  A piece (in
  the sense of the above proof) is a connected subpath that is
  contained in the link of one of the cone vertices (i.e., the
  vertices in the orbits $u'_n$ (resp.~$u_n$)).
  A piece consisting of
  just a power of a single letter is contained in the links of many
  different cone vertices.  A piece consisting of a product of two
  powers of letters is also contained in the links of many different
  cone vertices, but is contained in the link of at most one vertex in
  each orbit.  A piece of degree $n$ is contained in the link of a
  single vertex in the orbit of $u'_n$ (resp.~$u_n$) and in the link
  of no other vertex.  A reduced word is $H'_P$-reduced
  (resp.~$H(Z)$-reduced) if each of its pieces of degree $n$ consists
  of a shortest path in the link of its cone vertex between its two
  end points.

  The Dehn property tells us that any non-trivial edge path
  in the graphical 1-skeleton that is reduced and $H'_P$-reduced
  (resp.~reduced and $H(Z)$-reduced) has distinct end points.

  The homomorphism $H'_P\rightarrow H(Z)$ gives a local embedding
  of the simplicial complex for $H'_P$ into the simplicial complex
  for $H(Z)$, and the content of Proposition~\ref{prop:hpinjects}
  is that this map is actually an embedding.  To prove this we
  start with an edge path that is non-trivial, reduced and
  $H'_P$-reduced and consider its image in the (subdivided)
  graphical Cayley complex for $H(Z)$.  Since there may be
  shortcuts in $K^1$ between the ends of a shortest path in
  $P$, the image need not be $H(Z)$-reduced.  However, the only
  places where it can fail to be $H(Z)$-reduced are pieces that
  are sufficiently long that they have a well-defined degree.  
  For each such piece of degree~$n$, we start
  by replacing the piece by a shortest path in the degree $n$
  subdivision of $K^1$ between its end points.  If any two
  pieces of well-defined degree are separated by shorter pieces, then
  this replacement is already reduced and $H(Z)$-reduced.  If
  on the other hand there are pieces of well-defined degree that are
  adjacent in the path, it is possible that the $H(Z)$-reductions
  of these pieces intersect in such a way as to introduce
  back-tracking, so that the new path is no longer reduced.
  Removing the backtracking cannot reduce the number of powers
  of letters involved to fewer than three (since at least five letters
  were involved originally and at most one letter
  can be lost at each end of each piece), so removing backtracking
  cannot change a piece of degree~$n$ to a piece that is too short
  to have a well-defined degree.
  Hence the new path is non-trivial, reduced and $H(Z)$-reduced,
  which establishes that $H'_P$ embeds in $H(Z)$.
\end{remark}

\begin{proof} (of Theorem~\ref{thm:main})
Since the generating sets of $H(S)$ and $H(T)$ are identified with
each other, the homomorphism $H(S)\rightarrow H(T)$ is surjective.  To
see that its kernel is non-trivial, let $(a_1,\ldots, a_g)$ be a
directed loop in the graph $K^1$ of length equal to the girth of
$K^1$, let $n$ be an element of $T-S$, and consider the element
$h:=a_1^na_2^n\cdots a_g^n$ of $H(S)$.  This element is contained in
$K_{S,T}$ and we claim that it is not the identity element in $H(S)$.
By Proposition~\ref{prop:hsissmallcanc}, the given graphical
presentation for $H(S)$ is $C'(1/6)$.  If $h$ represents the identity
element then each path in the Cayley graph that follows the word
defining $h$ is a closed loop.  In this case by the main theorem of
graphical small cancellation, Theorem~\ref{thm:mainofsc}, this loop
must contain more than half of some cycle in one of the relator
graphs.  By condition~5 of Definition~\ref{defn:kdefn}, the perimeter
of each polygon is more than twice the girth~$g$ and so $h$ cannot
contain more than half of the degree $n$ subdivision of $\partial P$
for $P$ any polygon of $K$.  The given word for $h$ also contains only
pieces consisting of less than $1/6$ of the length of any cycle in a
defining relator of $H(S)$ of degree $m\neq n$, since any such piece
is of the form $a_i^la_{i+1}^l$ for $l$ equal to the minimum of
$|m|$~and~$|n|$, and $g\geq 13$.  

It remains to show that the kernel $K_{S,T}$ is acyclic.  Since
$E(S)$ is contractible and $K_{S,T}$ acts freely cellularly on
$E(S)$, it follows that $E(S)/K_{S,T}$ is an Eilenberg--Mac~Lane
space for $K_{S,T}$.  By Proposition~\ref{prop:mainarg}, $E(S)/K_{S,T}$
has the same homology as $F$, which is contractible because it has
the same homotopy type as $E(T)$.  
\end{proof}

\section{Graphs of groups} 
\label{sec:graphgp}

In this section we prove Proposition~\ref{prop:main} and 
Corollary~\ref{cor:main}.  We start by proving two general
results that we will use.

\begin{proposition}\label{prop:pullback} 
  Let $f:X\rightarrow Y$ be a map of Eilenberg--Mac~Lane
  spaces, and suppose that the induced map $f_*:G\rightarrow Q$
  of fundamental groups is surjective, with kernel $N$.
  Let $\pi:\widetilde{Y}\rightarrow Y$ be the universal
  covering of $Y$, and let $P$ be the pullback of this
  covering along $f$.  Then $P$ is an Eilenberg--Mac~Lane
  space for $N$.
\end{proposition}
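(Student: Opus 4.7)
The plan is to view $P$ as a covering space of $X$ and apply the classical correspondence between covers and subgroups.  The pullback square
\[
\begin{array}{ccc}
P & \longrightarrow & \widetilde{Y} \\
\downarrow & & \downarrow \pi \\
X & \stackrel{f}{\longrightarrow} & Y
\end{array}
\]
shows that $P \to X$ is a covering map, since pullbacks of coverings along maps of CW-complexes are again coverings; moreover, the regular covering $\widetilde{Y} \to Y$ with deck group $Q$ pulls back to a regular covering $P \to X$ whose deck group is again $Q$, and whose fibre over any $x \in X$ may be identified with $\pi^{-1}(f(x)) \cong Q$.

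First I would check that $P$ is connected.  Given two points $(x_0, \tilde{y}_0)$ and $(x_0, \tilde{y}_0 \cdot q)$ in the same fibre (for $q \in Q$), surjectivity of $f_*$ lets me choose a loop $\gamma$ in $X$ based at $x_0$ with $f_*[\gamma] = q$.  The lift of $f\circ\gamma$ to $\widetilde{Y}$ starting at $\tilde{y}_0$ ends at $\tilde{y}_0\cdot q$, and pairing $\gamma$ with this lift produces a path in $P$ joining the two given points; hence $P$ is path connected.

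Next I would identify $\pi_1(P)$.  By the covering correspondence applied to $P \to X$, the image $p_*(\pi_1(P))$ in $G$ consists of precisely those classes $[\gamma]$ such that $f\circ\gamma$ lifts to a loop in $\widetilde{Y}$; since $\widetilde{Y}\to Y$ is the universal cover, this occurs if and only if $f_*[\gamma] = 1$ in $Q$, i.e., if and only if $[\gamma]\in N$.  So $\pi_1(P) \cong N$.  Finally, since a covering map induces isomorphisms on $\pi_i$ for $i \geq 2$ and $X$ is a $K(G,1)$, the higher homotopy groups of $P$ all vanish, so $P$ is a $K(N,1)$.

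The argument is essentially classical covering space theory, and the only real subtlety is the connectedness of $P$, which genuinely relies on the surjectivity hypothesis.  Without that hypothesis, $P$ would decompose into connected components indexed by the cosets of $f_*(G)$ in $Q$, each of which would still be a $K(N,1)$, but one could only conclude that each component of $P$ is an Eilenberg--Mac~Lane space for $N$.
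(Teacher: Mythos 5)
Your proof is correct and follows essentially the same route as the paper: view the pullback as a regular covering of $X$ with deck group $Q$, use surjectivity of $f_*$ to get connectedness, and then identify the cover with the one corresponding to $N$. The only cosmetic difference is that the paper invokes uniqueness of the connected regular cover with deck group $Q$ (so $P\cong\widetilde{X}/N$), whereas you compute $p_*(\pi_1(P))=N$ directly via the lifting criterion; both are standard covering-space arguments and equally valid.
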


\begin{proof}
  Recall that the pullback $P$ is defined by 
  \[P=\{(x,y)\in X\times \widetilde{Y}\,\,:\,\, f(x)=\pi(y)\}.\]
  For any $f:X\rightarrow Y$, this is a covering space of $X$
  that is regular and has $Q$ as a group of deck transformations,
  where the action of $q\in Q$ is defined by $q(x,y)=(x,qy)$ and 
  the covering map is the map $(x,y)\mapsto x$.  The action of
  $Q$ is transitive on each fibre of this map.  In the case
  when $f_*:G\rightarrow Q$ is \emph{surjective} it may be shown
  that $P$ is connected.  To see this, if $\pi(y)=\pi(y')=f(x)$,
  then there is a loop $\beta$ in $Y$ that lifts to a path in $\widetilde{Y}$
  from $y$ to $y'$.  Now let $\gamma$ be a loop in $X$ based at $x$
  so that $f\circ \gamma$ is in the same homotopy class as $\beta$.
  The loop $\gamma$ and a lift to $\widetilde{Y}$ of the loop
  $f\circ\gamma$ together define a path in $P$ from $(x,y)$ to
  $(x,y')$.  Since $P$ is connected and $Q$ acts transitively on
  each fibre of the map $P\rightarrow X$, it follows that in
  this case $Q$ is the whole group of deck transformations.  

  Up to isomorphism there is only one connected regular covering of
  $X$ with $Q$ as its group of deck transformations: the space
  $\widetilde{X}/N$, where $\widetilde{X}$ is the universal covering
  of $X$.  This space is an Eilenberg--Mac~Lane space for $N$ and
  we have seen that it is homeomorphic to $P$.
\end{proof}

\begin{proposition}\label{prop:kergraphofgroups} 
  For $i\in \{1,\ldots,m\}$ let $H_i$ be a subgroup of a group $H$
  and let $N$ be a normal subgroup of $H$ so that $N\cap H_i$ is
  trivial for each $i$.  Let $G_i$ be another group that contains
  $H_i$ as a subgroup.  Let $G$ be the fundamental group of the
  star-shaped graph of groups with $m$ arms where the central
  vertex group is $H$, the edge groups are $H_1,\ldots, H_m$
  and the outer vertex groups are $G_1,\ldots,G_m$.  Let $\overline{G}$
  be the fundamental group of a similar graph of groups in which
  the central vertex group is replaced by $\overline{H}=H/N$.
  The quotient map $H\rightarrow H/N$ together with the identity
  maps on the edge groups and outer vertex groups induces a
  surjective group homomorphism $G\rightarrow \overline{G}$
  and the kernel of this homomorphism is isomorphic to a free
  product of copies of $N$, where the copies are indexed by
  the cosets of $\overline{H}$ in $\overline{G}$.
\end{proposition}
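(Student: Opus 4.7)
The plan is to combine the graph-of-spaces model of Proposition~\ref{prop:graphofgroups}(2) with the pullback principle of Proposition~\ref{prop:pullback}. First I would build based Eilenberg--Mac~Lane spaces $X$ and $\overline{X}$ for $G$ and $\overline{G}$ via the star-shaped recipe, arranging that they share exactly the same outer vertex spaces $K(G_i,1)$ and the same edge cylinders $K(H_i,1)\times[0,1]$; they differ only at the centre, where $X$ uses $K(H,1)$ and $\overline{X}$ uses $K(\overline{H},1)$. The hypothesis $N\cap H_i=\{1\}$ is what lets $\overline{G}$ be defined as a graph of groups in the first place, since it guarantees that $H_i$ still injects into $\overline{H}$. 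A standard model for the quotient $H\to\overline{H}$ provides a based map $K(H,1)\to K(\overline{H},1)$, and extending it by the identity on every other piece yields a based map $f\colon X\to\overline{X}$ inducing the asserted homomorphism on fundamental groups. Surjectivity of $f_*$ is immediate, since both graph-of-groups decompositions are generated by the images of their vertex groups and $f_*$ surjects onto each vertex group.

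With surjectivity in hand, Proposition~\ref{prop:pullback} identifies the pullback $P$ of the universal cover $\widetilde{\overline{X}}\to\overline{X}$ along $f$ as an Eilenberg--Mac~Lane space for $\ker(G\to\overline{G})$, so the task reduces to computing $\pi_1(P)$. The universal cover $\widetilde{\overline{X}}$ is itself a graph of spaces over the Bass--Serre tree $T$ for the given decomposition of $\overline{G}$: the vertices of $T$ are the cosets $g\overline{H}$ together with the cosets $gG_i$, its edges are the cosets $gH_i$, and the fibers over each vertex or edge are the universal covers of the corresponding pieces of $\overline{X}$.

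The pullback $P$ then inherits a graph-of-spaces structure over the same tree $T$. Over each outer vertex or edge of $T$, the map $f$ is the identity on the corresponding piece of $X$, so the fiber of $P$ equals that of $\widetilde{\overline{X}}$ and is therefore contractible (either an $EG_i$ or an $EH_i\times[0,1]$). Over each central vertex $g\overline{H}$ of $T$, however, the fiber $E\overline{H}\to B\overline{H}$ pulls back along $K(H,1)\to K(\overline{H},1)$ to give the covering of $K(H,1)$ associated with the subgroup $N$; connectedness of this covering follows from surjectivity of $H\twoheadrightarrow\overline{H}$, and it is therefore a $K(N,1)$.

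Finally, because $T$ is a tree, because every edge fiber and every outer vertex fiber is simply connected, and because the attaching maps from edge fibers into vertex fibers are trivially $\pi_1$-injective (the edge fibers being simply connected), standard graph-of-spaces theory presents $\pi_1(P)$ as the fundamental group of a graph of groups over $T$ with trivial edge and outer vertex groups and with a copy of $N$ at each central vertex. For such a graph of groups $\pi_1$ is the free product of the surviving vertex groups, yielding exactly one copy of $N$ per coset of $\overline{H}$ in $\overline{G}$, as required. The main obstacle I anticipate is the middle step: setting up the pullback carefully enough that its graph-of-spaces structure over $T$ is transparent, and verifying connectedness of the central fiber; once that is in place, everything else follows from formal Bass--Serre theory.
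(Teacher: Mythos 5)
Your proposal is correct and follows essentially the same route as the paper: both build the two star-shaped graph-of-spaces Eilenberg--Mac~Lane spaces sharing edge and outer vertex pieces, apply Proposition~\ref{prop:pullback} to realise the kernel via the pullback of the universal cover, and then read off the answer from the induced graph-of-spaces structure over the Bass--Serre tree, with contractible fibers everywhere except $K(N,1)$ fibers over the vertices lying above the central vertex. The only cosmetic difference is that you verify connectedness of the central fibers directly, whereas the paper gets connectedness once and for all from Proposition~\ref{prop:pullback}.
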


\begin{proof}
  We use the previous proposition to construct an Eilenberg--Mac~Lane
  space for the kernel of the map $G\rightarrow \overline{G}$ from
  which the stated result will be apparent.  Let $\overline{X}$ be
  an Eilenberg--Mac~Lane space for $\overline{G}$, let $X$ be an
  Eilenberg--Mac~Lane space for $G$, and let $f:X\rightarrow \overline{X}$
  be a based map that realizes the quotient map $G\rightarrow \overline{G}$.
  Let $Y_i$ be an Eilenberg--Mac~Lane space for $H_i$, let $Z_i$ be an
  Eilenberg--Mac~Lane space for $G_i$ and let $f_i:Y_i\rightarrow X$
  and $g_i:Y_i\rightarrow Z_i$ be based maps so that ${f_i}_*:H_i\rightarrow H$
  and ${g_i}_*:H_i\rightarrow G_i$ are the inclusions.  Define $\overline{f_i}=
  f\circ f_i$ and define $\overline{g_i}=g_i$.

  We can use the spaces $X$, $Y_i$, $Z_i$ together with the maps
  $f_i$, $g_i$ to make a star-shaped graph of spaces and we can use
  the spaces $\overline{X}$, $Y_i$, $Z_i$ together with the maps
  $\overline{f_i}$, $\overline{g_i}$ to make a second star-shaped
  graph of spaces.  By part~2 of Proposition~\ref{prop:graphofgroups},
  these spaces are Eilenberg--Mac~Lane spaces for $G$ and
  $\overline{G}$ respectively.  Moreover, we may define a map of
  graphs of spaces by taking the map $f:X\rightarrow \overline{X}$ on
  the central vertex space and the identity map on each edge space
  $Y_i$ and on each outer vertex space $Z_i$.  This gives us an
  explicit map of Eilenberg--Mac~Lane spaces inducing the surjection
  $G\rightarrow \overline{G}$ on fundamental groups.  By
  Proposition~\ref{prop:pullback}, the pullback of the universal
  covering space for the space for $\overline{G}$ along this map is an
  Eilenberg--Mac~Lane space for the kernel.

  The universal covering space of the graph of spaces that is an
  Eilenberg--Mac~Lane space for $\overline{G}$ is well understood:
  it can be viewed as another star-shaped graph of spaces, where
  each of the spaces arising is a disjoint union of copies of the
  universal cover of the original space, so that for example over
  the edge $I\times Y_i$ lies a disjoint union of copies of $I\times
  \widetilde{Y_i}$.  This space can also be understood as a graph of
  spaces in a different way, with each vertex and edge space being a
  single component of the union described above.  In this way, the
  universal covering is described as a graph of contractible spaces.
  Since it is by definition simply-connected, the graph underlying
  this graph of spaces is a tree: this is the Scott--Wall approach
  to constructing the Bass--Serre tree for $\overline{G}$ as a
  graph of groups~\cite[1.B]{hatcher}~or~\cite{scottwall}.

  The pullback space can also be understood as a graph of groups
  with underlying graph the Bass--Serre tree for $\overline{G}$
  expressed as a star-shaped graph of groups.  In this case,
  every edge space is contractible, and the vertex spaces that
  correspond to leaf vertices of the star-shaped graph are also
  contractible, while the vertex spaces over vertices that map
  to the central vertex of the star are Eilenberg--Mac~Lane
  spaces for $N$.  Thus the kernel of the group homomorphism
  is the free product of copies of $N$ indexed by the vertices
  of the Bass--Serre tree for $\overline{G}$ that map to the
  central vertex of the star, or equivalently indexed by the
  cosets $\overline{G}/\overline{H}$.
\end{proof} 

\begin{proposition} For each $S\subseteq T\subseteq Z$, the kernel of
  the homomorphism $G(S)\rightarrow G(T)$ is acyclic.
\end{proposition}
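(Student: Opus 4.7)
The plan is to combine Theorem~\ref{thm:main} with Proposition~\ref{prop:kergraphofgroups} and then appeal to a standard fact about free products.

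First I would verify that Proposition~\ref{prop:kergraphofgroups} applies in this situation. The group $G(S)$ is, by definition, the fundamental group of a star-shaped graph of groups whose central vertex group is $H(S)$, whose edge groups are the polygon subgroups $H_P(S)$, and whose outer vertex groups are the finitely-presented groups $G_P$; since $K$ is a finite CW-complex, there are only finitely many arms. Taking $N := K_{S,T}\trianglelefteq H(S)$, Theorem~\ref{thm:main} gives exactly the hypotheses needed: $K_{S,T}$ is normal in $H(S)$ and intersects each $H_P(S)$ trivially. Moreover, the quotient $H(S)/K_{S,T}$ is canonically $H(T)$, and the resulting star-shaped graph of groups with $H(T)$ at the centre (and unchanged edge and outer vertex groups) is precisely the one defining $G(T)$. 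The natural homomorphism $G(S)\rightarrow G(T)$ is therefore the one produced by Proposition~\ref{prop:kergraphofgroups}.

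Applying that proposition immediately identifies the kernel with a free product of copies of $K_{S,T}$, indexed by the cosets of $H(T)$ in $G(T)$ (equivalently, by the vertices of the Bass--Serre tree of $G(T)$ that lie over the central vertex of the star). By Theorem~\ref{thm:main}, each such factor $K_{S,T}$ is an acyclic group.

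To conclude I would use the standard observation that a free product of acyclic groups is acyclic: at the level of Eilenberg--Mac~Lane spaces, a free product is realised by a one-point union, and $\widetilde H_*\bigl(\bigvee_\alpha X_\alpha\bigr)\cong\bigoplus_\alpha \widetilde H_*(X_\alpha)$, which vanishes when each summand does. This works whether the indexing set is finite or infinite. Hence the kernel of $G(S)\rightarrow G(T)$ is acyclic.

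There is no real obstacle: all the substantive work has already been done in Theorem~\ref{thm:main} (which gives acyclicity of $K_{S,T}$ and the trivial intersection with each $H_P(S)$) and in Proposition~\ref{prop:kergraphofgroups} (which identifies the kernel in the graph-of-groups setting). The only thing to be careful about is to state the fact that a (possibly infinite) free product of acyclic groups is acyclic, together with the one-line homotopical justification via wedges of classifying spaces.
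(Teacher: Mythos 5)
Your proof is correct and follows essentially the same route as the paper: verify the hypotheses of Proposition~\ref{prop:kergraphofgroups} using Theorem~\ref{thm:main}, identify the kernel as a free product of copies of the acyclic group $K_{S,T}$, and conclude acyclicity. The only difference is that you spell out the (implicitly used) fact that a free product of acyclic groups is acyclic via a wedge of Eilenberg--Mac~Lane spaces, which the paper leaves unstated.
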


\begin{proof}
  Both $G(S)$ and $G(T)$ are constructed as star-shaped graphs of
  groups, with edges indexed by the polygons of $K$.  The leaf vertex
  groups are $G_P$ in each case and the edge groups are $H_P$ in each
  case.  The only difference is that the central vertex group is
  $H(S)$ for constructing $G(S)$ and $H(T)$ for constructing $G(T)$.
  Thus the hypotheses of
  Proposition~\ref{prop:kergraphofgroups} are satisfied, and we deduce
  that the kernel of the surjection $G(S)\rightarrow G(T)$ is
  isomorphic to the free product of copies of $K_{S,T}$ and so is
  itself acyclic.
\end{proof}

\begin{proposition} 
There is a finite Eilenberg--Mac~Lane space for 
the group $G(\emptyset)$ as in Proposition~\ref{prop:main}. 
If each $G_P$ has a finite $2$-dimensional Eilenberg--Mac~Lane
space then so does $G(\emptyset)$.  
\end{proposition}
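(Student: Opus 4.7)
The plan is to apply the star-shaped graph-of-groups Eilenberg--Mac~Lane space construction of Proposition~\ref{prop:graphofgroups}(2) using carefully chosen (and generally infinite) Eilenberg--Mac~Lane spaces for $H(\emptyset)$ and each $H_P$, and then to deformation retract the resulting space onto a finite CW-complex by exploiting the cofibration property of certain subcomplex inclusions.

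First I would take $X_0$ to be the graphical presentation complex for $H(\emptyset)$; by Theorem~\ref{thm:main} and the main theorem of graphical small cancellation, $X_0$ is a $2$-dimensional Eilenberg--Mac~Lane space for $H(\emptyset)$. For each polygon $P$, let $X_{0,P}\subseteq X_0$ denote the subcomplex consisting of the wedge point, the $1$-cells corresponding to the $l_P$ edges of $\partial P$, and the graphical $2$-cells indexed by $(P,n)$ for $n\in Z$. A piece-length check essentially identical to the first proposition of Section~\ref{sec:smallc}, but restricted to relators supported on a single polygon, shows that $X_{0,P}$ is itself a $C'(1/6)$ graphical presentation complex, hence a $2$-dimensional Eilenberg--Mac~Lane space for $H_P$; the inclusion $X_{0,P}\hookrightarrow X_0$ is a subcomplex inclusion realising the injection $H_P\hookrightarrow H(\emptyset)$ of Proposition~\ref{prop:hpinjects}.

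Next I would pick a finite Eilenberg--Mac~Lane space $Z_P$ for each $G_P$ (of dimension $2$ under the hypothesis of the second sentence) together with a cellular map $g_P\colon X_{0,P}\to Z_P$ realising $H_P\hookrightarrow G_P$. Proposition~\ref{prop:graphofgroups}(2) then produces an Eilenberg--Mac~Lane space for $G(\emptyset)$ as the usual double mapping cylinder $\mathcal{Y}$ over the star-shaped graph. Since each $X_{0,P}\hookrightarrow X_0$ is a cofibration, every cylinder $X_{0,P}\times I$ deformation retracts onto its $0$-end in $X_0$; performing all these retractions simultaneously gives a deformation retraction of $\mathcal{Y}$ onto the pushout $W$ in which each $x\in X_{0,P}$ has been identified with $g_P(x)\in Z_P$. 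In particular $W$ is an Eilenberg--Mac~Lane space for $G(\emptyset)$.

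The payoff is that $W$ is a finite CW-complex, of dimension at most $\max_P \dim Z_P$, hence $2$-dimensional under the hypothesis of the second sentence. The crucial observation is that every $2$-cell of $X_0$ is a graphical $2$-cell belonging to $X_{0,P}$ for exactly one polygon $P$, and so is absorbed into the finite complex $Z_P$ by the pushout identification; what remains of $X_0$ in $W$ is just the finite $1$-skeleton inherited from the vertices and edges of $K^1$, together with the finite cells of each $Z_P$. The main bookkeeping obstacle is making the identifications honest cell identifications, particularly when two polygons share an edge so that loops in two different $Z_P$'s are simultaneously identified; this is handled routinely by choosing each $Z_P$ with a CW-structure in which every generator $a_i$ of $H_P$ is represented by a single $1$-cell $\gamma_{P,i}$, so that the identifications all reduce to identifications of single $1$-cells and $W$ inherits an honest finite CW-structure.
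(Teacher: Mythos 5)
Your setup (the presentation complex $X_0$ for $H(\emptyset)$, the subcomplexes $X_{0,P}$ realising $H_P$ via Proposition~\ref{prop:hpinjects}, finite spaces $Z_P$, and the double mapping cylinder $\mathcal{Y}$ from Proposition~\ref{prop:graphofgroups}(2)) matches the paper's proof. The gap is in the step where you pass from $\mathcal{Y}$ to the pushout $W$. First, this is not a deformation retraction: $W$ is a \emph{quotient} of $\mathcal{Y}$, not a subspace, since collapsing the cylinder $X_{0,P}\times I$ onto its $0$-end forces the identification $x\sim g_P(x)$ and hence moves points of $Z_P$ and identifies distinct points of $X_0$ whenever $g_P$ is non-injective (which it must be: $X_{0,P}$ has infinitely many $2$-cells and $Z_P$ is finite). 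Second, the justification ``each $X_{0,P}\hookrightarrow X_0$ is a cofibration, so perform all the collapses simultaneously'' does not suffice. Collapsing a double mapping cylinder onto the corresponding pushout is a homotopy equivalence when the leg along which one collapses is a cofibration; here the simultaneous collapse is along the \emph{combined} map $\bigsqcup_P X_{0,P}\to X_0$, which is not even injective, because distinct polygons share the wedge point and the $1$-cells of common edges of $K^1$, and the other legs $g_P$ are not cofibrations either. Quotients of this kind can change homotopy type in general (the unreduced suspension of $S^1$ versus the pushout of $\mathrm{pt}\leftarrow S^1\to\mathrm{pt}$ is the standard cautionary example), so the claim $\mathcal{Y}\simeq W$ needs an argument, and your final paragraph about cell structures is presented as CW bookkeeping rather than as the load-bearing hypothesis it would have to be.

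The argument can be repaired, but not ``simultaneously and for free'': for instance, arrange (as you suggest) that $g_P$ embeds the sub-rose $X_{0,P}^1$ as a subcomplex of $Z_P$, and then collapse one cylinder at a time, checking at each stage that the image of the next $X_{0,P}$ in the partially collapsed space is still an embedded subcomplex, so that each single collapse is a genuine pushout along a cofibration; since $K$ has finitely many polygons this terminates in a finite complex. The paper's proof avoids all of this by making the opposite move: it does not collapse the cylinders at all, but deformation retracts $\mathcal{Y}$ onto the honest subcomplex $W'$ consisting of $X_0^1$ together with the mapping cylinders of the restrictions $g_P|_{X_{0,P}^1}$, pushing each $2$-cell of $X_0$ through the prism over it into $Z_P$. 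Because every $2$-cell of $X_0$ lies in exactly one $X_{0,P}$ and the retraction fixes $X_0^1$ and every $Z_P$ pointwise, these retractions combine with no injectivity assumptions on the maps $g_P$ and no special cell structure on $Z_P$, and $W'$ is visibly finite and $2$-dimensional when each $Z_P$ is. You should either adopt that retraction or supply the iterated-cofibration argument sketched above; as written, the homotopy equivalence $\mathcal{Y}\simeq W$ is unproved.
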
 

\begin{proof} 
Let $X$ be the presentation 2-complex for $H(\emptyset)$, 
which is built from standard cells, and has finite 1-skeleton.  
For each polygon $P$, let $Y_P$ be the presentation 2-complex 
for $H_P$, and let $Z_P$ be a finite Eilenberg--Mac~Lane space
for $G_P$.  Since the generators and relations for each $H_P$ 
are a subset of those of $H(\emptyset)$, the inclusion 
$H_P\rightarrow H(\emptyset)$ is induced by an isomorphism 
between $Y_P$ and a subcomplex of $X$.  Note also that each 
2-cell of $X$ is contained in exactly one of these subcomplexes.  

If $f_P:Y_P\rightarrow Z_P$ is a map that induces the embedding
$H_P\rightarrow G_P$, then an Eilenberg--Mac~Lane space $W$ for
$G(\emptyset)$ can be built by taking a copy of the mapping cylinder
of $f_P:Y_P\rightarrow Z_P$ for each $P$, and identifying the copy of
$Y_P$ with its image in $X$.  If $Y_P^1$ denotes the finite 1-skeleton
of $Y_P$, then the mapping cylinder of the restriction $f_P|_{Y_P^1}$
is a deformation retract of the mapping cylinder for $f_P$, and is a
finite complex.  Since each 2-cell of $X$ belongs to a unique polygon
$P$, these deformation retractions can be combined.  This shows that
the finite complex $W'$ obtained from $X^1$ and the mapping cylinders
of the maps
$f_P|_{Y_P^1}$ by identifying each copy of $Y_P^1$ with its image in
$X^1$ is a deformation retract of $W$.  If each $Z_P$ is 2-dimensional
then so is $W'$.  
\end{proof}

\begin{proof} (of Corollary~\ref{cor:main}) 
  Recall
  from~\cite[Sec.~15]{fpg} the set-valued invariant
  $\calr(\bg,G)\subseteq \zz$ for a group $G$ and a sequence
  $\bg=(g_1,\ldots,g_l)$ of elements of $G$, defined by
  $$\calr(\bg,G)=\{n\in \zz\,:\, g_1^ng_2^n\cdots g_l^n=1\}.$$
  In~\cite[Prop.~15.2]{fpg}, three properties of this invariant
  were established: for a fixed isomorphism type of countable group $G$,
  the invariant takes only countably many values as $\bg$ varies;
  if $H\geq G$ then $\calr(\bg,G)=\calr(\bg,H)$; if $G$ is finitely
  presented then $\calr(\bg,G)$ is recursively enumerable.

  Let $a_1,\ldots,a_g$ be a directed loop in $K$ of length equal
  to the girth of $K^1$.  Viewing this loop as a sequence of elements
  of $H(S)$, the Dehn property implies that for any $S\subseteq Z$,
  $a_1^n\cdots a_g^n=1$ in $H(S)$ if and only if $n\in S$.  Hence
  for any $S\subseteq Z$ one has 
  $$\calr((a_1,\ldots,a_g),G(S))=\calr((a_1,\ldots,a_g),H(S))=S\cup \{0\}.$$
  From this together with the known properties of the invariant
  $\calr$ it follows that there are continuously many isomorphism
  types of groups $G(S)$, that $G(S)$ is finitely presented if and
  only if $S$ is finite,   and that $G(S)$ can embed in a finitely
  presented group only when $S$ is recursively enumerable.  For
  the converse, note that since we know that $H_P$ embeds in a
  finitely presented group (see the remark at the end of the next
  section) it follows that $Z$ is recursively
  enumerable.  Now if $S$ and $Z$ are both recursively enumerable
  $G(S)$ is recursively presented and so by the Higman embedding
  theorem~\cite{hig,lynsch}, $G(S)$ does embed in some finitely
  presented group.  

  We know already that $G(\emptyset)$ has geometric dimension two and
  is of type $F$; since the kernel of the map $G(\emptyset)\rightarrow
  G(S)$ is acyclic it follows from Proposition~\ref{prop:coh} that
  $G(S)$ is of type $FH$ and has cohomological dimension two.  
  \end{proof}

\section{Embedding polygon subgroups}
\label{sec:embed}

In this section we construct embeddings of the polygon subgroup 
$H_P$, whose isomorphism type depends only on the perimeter $l$ 
of $P$ and on $Z\subseteq \zz-\{0\}$, into groups $G_P$ of type~$F$.  
Moreover, each group $G_P$ will have geometric dimension two.

The first case that we deal with is the case $Z=\zz-\{0\}$.  Before
starting this case we recall that the right-angled Artin group $A_L$ 
associated to a flag simplicial complex $L$ is the group with generators 
the vertices of $L$, subject to the relations that the two vertices 
incident on each edge commute.  The right-angled Artin group associated
to a finite flag complex $L$ is of type~$F$ and has cohomological dimension
one more than the dimension of $L$~\cite{kimroush,davbook}.  
The Bestvina--Brady group $BB_L$ is defined to be 
the kernel of the map $A_L\rightarrow \zz$ that sends each of the 
generators to $1\in \zz$.  Provided that $L$ is connected, there 
is a generating set for $BB_L$ that corresponds to the directed 
edges of $L$, where the edge from vertex $x$ to $y$ corresponds to the 
element $xy^{-1}$ in $A_L$.  A presentation for $BB_L$ in terms of 
these generators is given in~\cite{dicksleary}.  
In the case when $Z=\zz-\{0\}$, the given presentation for $H_P$ 
is equal to this presentation for $BB_{\partial P}$.  Hence we may 
take for $G_P$ the right-angled Artin group $A_{\partial P}$.  
However, it may be more helpful to use the natural isomorphism 
between the right-angled Artin group $A_L$ and the Bestvina--Brady 
group $BB_{C(L)}$ for the cone on $L$: if $c$ is the cone vertex, 
this isomorphism takes the vertex generator $x$ to the generator 
$xc^{-1}$ corresponding to the edge from $x$ to $c$.  

\begin{proposition} 
Let $K$ be a spectacular $2$-complex, let $Z=\zz-\{0\}$ and let $L$ be
the flag complex obtained from $K$ by replacing each polygon with the
cone on its boundary.  With the embedding $H_P\rightarrow G_P$ as
described above there is an isomorphism, for each $S\subseteq
\zz-\{0\}$, from $G(S)$ to the generalized Bestvina--Brady group
$G_L(S\cup\{0\})$ in the sense of~\cite[Defn.~1.1]{fpg}.
\end{proposition}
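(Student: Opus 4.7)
The plan is to prove the isomorphism by matching presentations.  First I would write down a presentation of $G(S)$ arising from its star-shaped graph-of-groups decomposition: by Bass--Serre theory the generators consist of the directed edges of $K$ (generators of $H(S)$) together with the generators of each outer vertex group $G_P=A_{\partial P}$, subject to the defining relations of $H(S)$ (cycle relations for $n\in S$ and polygon power relations for $n\in Z-S$), the defining relations of each $A_{\partial P}$, and amalgamation relations identifying the subgroup $H_P$ in the two ways.  Using the natural isomorphism $A_{\partial P}\cong BB_{C(\partial P)}$ recalled at the start of Section~\ref{sec:embed}, I would rewrite the generators of each $A_{\partial P}$ as the directed edges from each vertex of $\partial P$ to an auxiliary cone vertex $c_P$; the amalgamation then takes the clean form $(v,w)=(v,c_P)(w,c_P)^{-1}$ for each directed edge $(v,w)$ of $\partial P$.

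Next I would read off the presentation of $G_L(S\cup\{0\})$ in the analogous form as given in~\cite{fpg}, noting that the flag complex $L$ has vertex set $V(K)\sqcup\{c_P\}_P$, edge set $E(K)\sqcup\{\{v,c_P\}:v\in\partial P\}$, and a triangle $\{v,w,c_P\}$ for each directed edge $(v,w)$ of $\partial P$.  When expressed on the directed edges of $L^{(1)}$ as generators, $G_L(S\cup\{0\})$ has the Dicks--Leary triangle and inverse-pair relations (accounting for the $0\in S\cup\{0\}$, i.e.\ the right-angled Artin structure) together with, for each $n\in S$ and each simple cycle of $L^{(1)}$, a degree-$n$ power relation.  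After the obvious identifications, the generator sets of the two presentations coincide.

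It then remains to match relations.  The amalgamation relations of $G(S)$ recover exactly the Dicks--Leary triangle relations of $L$ whose triangles contain a cone vertex $c_P$; no triangles of $L$ sit inside $K^1$, since condition~4 of Definition~\ref{defn:kdefn} forces the girth of $K^1$ to exceed $3$, so these are all the triangle relations.  The polygon boundary relations of $H(S)$ for $n\in Z-S$ hold automatically in $G_L(S\cup\{0\})$ because $\partial P$ bounds the cone through $c_P$ and its degree-$n$ relation follows from the Dicks--Leary identities of that cone.  The $K^1$-cycle relations of $H(S)$ for $n\in S$ form a sub-collection of the cycle relations of $G_L(S\cup\{0\})$.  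The main obstacle I foresee is the converse direction: showing that for $n\in S$, the degree-$n$ relation on a simple cycle of $L^{(1)}$ which passes through one or more cone vertices is already a consequence of the $G(S)$-relations.  The key observation is that inside the star of any $c_P$ the Dicks--Leary relations let one replace a $c_P$-traversal by the corresponding edge-path along $\partial P$; iterating over all traversed cone vertices reduces the cycle to one contained in $K^1$, whose degree-$n$ relation is a defining relation of $H(S)$.
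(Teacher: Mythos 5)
Your proposal is correct and follows essentially the same route as the paper: the paper's proof likewise identifies the generators coming from the graph-of-groups description of $G(S)$ with the directed edges of $L$, matches them with the generators of $G_L(S\cup\{0\})$ from \cite[Defn.~1.1]{fpg}, and observes that the resulting mutually inverse bijections send relators to valid relations. Your explicit handling of the one delicate point---reducing a degree-$n$ relation on a cycle of $L^{(1)}$ passing through cone vertices to a closed path in $K^1$ by telescoping across the Dicks--Leary triangle relations at each $c_P$---is precisely the verification that the paper's proof leaves implicit.
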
 

\begin{proof}
  The generating set for $H(S)$
  consists of the directed edges of $K^1$, which is a subcomplex of
  $L$, the generating set for each $H_P$ is identified with the edges
  of $\partial P$, and the generating set for $G_P$ is identified with
  the edges from vertices of $\partial P$ to the cone vertex $c_P$.
  This gives a generating set for the group $G(S)$ consisting of
  directed edges of $L$.  Since each edge of $L$ is either in the
  image of $K^1$ or is incident on some cone vertex, this generating
  set for $G(S)$ consists of all of the directed edges of $L$.  But
  the generating set for the presentation for $G_L(S\cup \{0\})$ given
  in~\cite[Defn.~1.1]{fpg} is also the directed edges of~$L$.  Hence
  there are natural mutually inverse bijections between the generators
  of $G(S)$ and the generators of $G_L(S\cup\{0\})$.

  It remains to show
  that these bijections of generating sets send the relators of each
  group to valid relations in the other group, where the relators
  taken for $G(S)$ are those implicit in its description as a graph
  of groups.  To do this one employs the sort of reasoning that was
  used in the proof of~\cite[Prop.~2]{dicksleary}.
  The relations of the given presentation for $G(S)$ are: 
  \begin{enumerate}
  \item{} For each triangle in $L$ with directed boundary $(a,b,c)$
    the relators $abc$ and $a^{-1}b^{-1}c^{-1}$;

  \item{} For each polygon $P$ of $K$ with directed boundary
    $(e_1,\ldots,e_l)$ and each $n\in \zz-S$ the relator
    $e_1^ne_2^n\cdots e_l^n$;

  \item{} For each directed cycle $(e_1,\ldots,e_l)$ in $K^1$ and
    each $n\in S$ the relator $e_1^ne_2^n\cdots e_l^n$.

  \end{enumerate}
  On the other hand, the relations in the given
  presentation for $G_L(S\cup\{0\})$ are: 

  \begin{enumerate}
  \item{} For each triangle in $L$ with directed boundary $(a,b,c)$
    the relators $abc$ and $a^{-1}b^{-1}c^{-1}$;

  \item{} For each directed cycle $(e_1,\ldots,e_l)$ in $L$ and
    each $n\in S$ the relator $e_1^ne_2^n\cdots e_l^n$.

  \end{enumerate}

  The relators that are not common to the two presentations are the
  relators of the second type in the presentation for $G(S)$ 
  and the relators of the second type in the presentation for
  $G_L(S\cup\{0\})$ that correspond to cycles in $L$ not contained in
  $K^1$.
  
  The relators of the second type in the presentation for $G(S)$
  associated to the boundary of a given polygon $P$ can be deduced
  from the triangle relators for the triangles that form the
  conical subdivision of $P$, as in the proof of~\cite[Prop.~2]{dicksleary}.

  To deduce the relators of the second type in the presentation for
  $G_L(S\cup\{0\})$ from the relators of the presentation for $G(S)$,
  one again reasons as in the proof of~\cite[Prop.~2]{dicksleary}. 
  Implicit in that proof is the statement that if $(a_1,\ldots,a_l)$
  and $(b_1,\ldots,b_m)$ are directed cycles that are (unbased) homotopic
  to each other in $L$, then for any $n$, the relation $a_1^n\cdots a_l^n$ is a
  consequence of the relator $b_1^n\cdots b_m^n$ together with the
  triangle relators.  This suffices, since any edge cycle in $L$ is homotopic to
  an edge cycle that is contained in $K^1$.
\end{proof} 

Next we consider the case $Z=\{k^n\,:\,n\geq 0\}$ for some $k\in \zz$
with $|k|>1$.  In this case the group $H_P$ has an injective but
non-surjective self-homomorphism $\phi=\phi_P$ defined by
$\phi(a_i):=a_i^k$ for each of the edge generators $a_i$.  In this 
case the natural choice for $G_P$ is the ascending HNN-extension
$$G_P=\langle a_1,\ldots,a_l,t\,: \,a_i^t=a_i^k,\,a_1a_2\cdots a_l=1\rangle,$$
in which conjugation by the stable letter $t=t_P$ acts by applying 
the homomorphism $\phi$.  

\begin{proposition} \label{prop:kpowers} 
The presentation $2$-complex for the finite presentation for $G_P$ given 
above is aspherical. 
\end{proposition}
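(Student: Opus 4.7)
The plan is to decompose $Y$ into an aspherical mapping-torus piece plus one additional $2$-cell, and then to argue that the final attachment preserves asphericity.

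Let $W \subseteq Y$ be the subcomplex obtained by removing the interior of the $2$-cell for $R_0 = a_1 a_2 \cdots a_l$. Then $W$ has $1$-skeleton $\bigvee_{i=1}^{l} S^1_{a_i} \vee S^1_t$ and the $l$ rectangular $2$-cells $R_i = t a_i t^{-1} a_i^{-k}$. I would recognise $W$ as the mapping torus of the cellular self-map $g \colon \bigvee^{l} S^1 \to \bigvee^{l} S^1$ that sends each circle $a_i$ to its $k$-fold cover. Since $\bigvee^{l} S^1$ is a $K(F_l,1)$ and $g_*$ is the injective endomorphism $\phi$ with $\phi(a_i)=a_i^k$, Proposition~\ref{prop:graphofgroups}(1) gives that $W$ is an aspherical Eilenberg--Mac~Lane space for the ascending HNN extension $F_l *_\phi$.

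Now $Y = W \cup_{R_0} D^2$ is obtained by attaching a single $2$-cell along the loop $R_0 = a_1 \cdots a_l$, viewed as an element of $\pi_1(W)=F_l *_\phi$. I would first verify that $R_0$ is not a proper power in $F_l *_\phi$. The element $R_0$ lies in the base $F_l$ and is primitive there, being the product of distinct free generators. Any putative $n$-th root ($n\ge 2$) would be elliptic on the Bass--Serre tree of the HNN extension (since $R_0$ is elliptic, and powers of hyperbolic elements are hyperbolic), and hence conjugate into $F_l$; a short analysis of conjugates using the ascending structure (translates of $F_l$ along $t$-powers give $\phi^m(F_l)$, in which the relevant conjugate of $R_0$ is $a_1^{k^m}\cdots a_l^{k^m}$, again not a proper power) forces $n=1$.

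With this in hand I would invoke a relative asphericity theorem in the spirit of Brodskii and Howie: for a locally indicable group $G$ and a non-proper-power element $r \in G$, attaching a $2$-cell to a $K(G,1)$ along $r$ yields an aspherical complex. Applied to $G = F_l *_\phi$, $r = R_0$, and $K(G,1)=W$, this gives the asphericity of $Y$. The main obstacle is verifying the hypotheses of this relative asphericity theorem, in particular local indicability of the ascending HNN extension $F_l *_\phi$; this can be established by a direct analysis of normal forms, using that $F_l$ is locally indicable and that the stable letter $t$ preserves a compatible order. An alternative, perhaps more in the spirit of the rest of the paper, is to build a $G_P$-equivariant deformation retraction of $\widetilde Y$ onto a point directly, exploiting the mapping-cylinder structure of the rectangular $R_i$-cells and the fact that in $G_P$ every generator $a_i$ equals $b_i^k$ for $b_i=t^{-1}a_it$; this allows one to sweep each lift of $R_0$ along the $t$-direction into its image $\phi(R_0)=a_1^k\cdots a_l^k$, and iterate.
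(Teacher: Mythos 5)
Your decomposition $Y=W\cup_{R_0}e^2$ is fine: $W$ is indeed the mapping torus of the degree-$k$ map on $\bigvee^l S^1$, hence aspherical with $\pi_1(W)\cong F_l*_\phi$ by Proposition~\ref{prop:graphofgroups}(1), and your claim that $R_0=a_1\cdots a_l$ is not a proper power in $F_l*_\phi$ is correct. The gap is the final, crucial step: there is no theorem asserting that attaching a $2$-cell to a $K(G,1)$ of a locally indicable group $G$ along a non-proper-power element $r\in G$ gives an aspherical complex, and the assertion is false --- even for the ambient groups you are using. Take $l=1$, $k=3$, so that $F_1*_\phi=\langle a,t\,:\,tat^{-1}=a^3\rangle$ is locally indicable and $W$ is its aspherical presentation complex; the element $r=t$ is not a proper power (map to $\zz$ by $t\mapsto 1$, $a\mapsto 0$), yet $W\cup_t e^2$ has fundamental group $\zz/2$, so it cannot be aspherical, being a finite-dimensional complex whose fundamental group is a non-trivial finite group. (Even simpler: $G=\zz^2$ and $r=a$ give $T^2\cup_a e^2$, which has $H_2\cong\zz$ but $\pi_1\cong\zz$.) The Brodskii--Howie results you appeal to concern one-relator products of locally indicable groups in which the relator has free-product length at least two, or equations/relators that genuinely involve new generators; your $R_0$ lies entirely inside the old group, indeed inside the vertex group $F_l$, which is exactly the degenerate configuration those hypotheses are designed to exclude. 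What is really needed is Lyndon-identity-type control of the normal closure of $R_0$ in $F_l*_\phi$ (freeness on a suitable set of conjugates), and that is essentially the content of the proposition, not something that can be quoted.

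Your fallback sketch (sweep each lift of $R_0$ along the $t$-direction onto $\phi(R_0)$ and iterate) is not a proof either: iterating the sweep indefinitely does not converge to a deformation retraction, and without further input the sweep has no target. It is telling that your argument nowhere uses $H_P$ or small cancellation. The paper's proof runs in the opposite direction: it starts from the infinite presentation $2$-complex for $H_P$, which is aspherical because the presentation is $C'(1/6)$, forms the mapping torus $M$ of a cellular map inducing $\phi$ (aspherical by Proposition~\ref{prop:graphofgroups}(1)), and equips $M$ with a CW-structure having $2$-cells $e_n$ for the relators $a_1^{k^n}\cdots a_l^{k^n}$ and $3$-cells $E_n$ expressing $e_n$ in terms of $e_{n-1}$ and the trapezoidal cells; collapsing each $E_n$ in turn deformation retracts $M$ onto the finite presentation complex $M_0$. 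Thus asphericity of the finite complex is inherited from the small-cancellation asphericity of the $H_P$-complex --- precisely the ingredient your approach is missing.
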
 

\begin{proof} 
Let $Y=Y_P$ be the presentation 2-complex for the small cancellation
presentation for $H_P$, and let $f:Y\rightarrow Y$ be the based
cellular map that induces $\phi:H_P\rightarrow H_P$ on fundamental
groups.  The natural choice of Eilenberg--Mac~Lane space for $G_P$ is
the mapping torus $M=M_f$ of $f$.  Since $f$ sends relators to
relators, there is an easy way to put a CW-structure on $M$, with
finite 1-skeleton.  Each $i$-cell of $Y$ contributes one $i$-cell and
one $(i+1)$-cell to $M$.  Hence the cells of $M$ are: one 0-cell,
$l+1$ 1-cells labelled by the generators $a_1,\ldots,a_l,t$, one
family of $l$ trapezoidal 2-cells (coming from the 1-cells of $Y$)
whose boundaries are the words $ta_it^{-1}a_i^{-k}$, an infinite
family of 2-cells and an infinite family of 3-cells.  For $n\geq 0$,
denote by $e_n$ the 2-cell that corresponds to the relator
$a_1^{k^n}a_2^{k^n}\cdots a_l^{k^n}$, and for $n\geq 1$ let $E_n$
be the 3-cell coming from the 2-cell $e_{n-1}$, so that the boundary
of $E_n$ consists of $e_{n-1}$, 
$-e_n$, and $k^{n-1}$ copies of each of the trapezoidal 2-cells.  For
$n\geq 0$, let $M_n$ be the subcomplex of $M$ that contains the
1-skeleton, the trapezoidal 2-cells, the 2-cells $e_0,\ldots,e_n$ and
the 3-cells $E_1,\ldots,E_n$.  There is a deformation retraction of
$E_n$ onto $\partial E_n-\hbox{Int}(e_n)$, and combining this with the
identity map on the rest of $M_{n-1}$ defines a deformation retraction
of $M_n$ onto $M_{n-1}$.  Applying these retractions successively so
that the $n$th of the retractions happens during the interval
$[1/2^n,1/2^{n-1}]$ gives a deformation retraction of $M$ onto $M_0$.
Since $M_0$ is the presentation 2-complex described in the statement
this implies that $M_0$ is aspherical.
\end{proof}

Sapir has given an aspherical version of the Higman
embedding theorem, stating that any finitely generated group with
an aspherical recursive presentation can be embedded into a
group with a finite aspherical presentation~\cite{sapir}.
This gives a characterization of which polygon groups embed
into groups of type~$F$.  

\begin{proposition}
  Suppose that $P$ is a polygon of perimeter at least~$13$, and
  let $H_P$ be the corresponding polygon group, which depends on
  $Z\subseteq \zz-\{0\}$ as well as on $P$.  The
  following statements are equivalent.
  \begin{itemize}
  \item{} The set $Z$ is recursively enumerable;

  \item{} $H_P$ embeds in a finitely presented group;

  \item{} $H_P$ embeds in a group admitting a finite $2$-dimensional
    Eilenberg--Mac~Lane space.
  \end{itemize}
\end{proposition}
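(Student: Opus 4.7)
The strategy is the cyclic chain $(3) \Rightarrow (2) \Rightarrow (1) \Rightarrow (3)$.  The implication $(3) \Rightarrow (2)$ is immediate, since a group admitting a finite Eilenberg--Mac~Lane space is of type $F$ and in particular finitely presented.

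For $(2) \Rightarrow (1)$ I would reuse the invariant $\calr(\bg,G) \subseteq \zz$ recalled in the proof of Corollary~\ref{cor:main}.  Taking $\bg = (a_1,\ldots,a_l)$ to be the boundary generators of $H_P$, the Dehn property (applied exactly as in the proof of Proposition~\ref{prop:hpinjects}) shows that the word $a_1^n \cdots a_l^n$ is non-trivial in $H_P$ for every $n \notin Z \cup \{0\}$, giving $\calr(\bg,H_P) = Z \cup \{0\}$.  If $H_P$ embeds in a finitely presented group $G$, then by the invariance and enumerability properties of $\calr$ from~\cite[Prop.~15.2]{fpg} one has $\calr(\bg,H_P) = \calr(\bg,G)$ and the latter set is recursively enumerable, forcing $Z$ itself to be recursively enumerable.

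The main content is $(1) \Rightarrow (3)$, which I would reduce to Sapir's aspherical Higman embedding theorem.  The plan is to verify that, when $Z$ is recursively enumerable and $l \geq 13$, the standard presentation
\[ H_P = \langle a_1,\ldots,a_l \mid a_1^n a_2^n \cdots a_l^n,\, n \in Z \rangle \]
is an aspherical recursive presentation.  Recursiveness is automatic.  For asphericity, I would regard this as the graphical presentation whose graphical relators are the degree-$n$ subdivisions of the cyclically labelled graph $\partial P$ and check the $C'(1/6)$ condition directly: within a single cycle the longest piece is $a_i^{|n|-1}$, while between two cycles of degrees $m,n$ with $|m| \leq |n|$ the longest pieces have length at most $2|m|$, and both bounds are strictly less than $|n|l/6$ once $l \geq 13$.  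Because each component of the labelled graph is itself a single cycle, the graphical 2-cell (the cone on that cycle) is a disk attached along the word $a_1^n \cdots a_l^n$, so the Ollivier--Gruber graphical presentation complex in this case coincides with the standard presentation 2-complex, and by Corollary~\ref{cor:oursistheirs} the two are homotopy equivalent to our graphical presentation complex.  The main graphical small cancellation theorem then gives asphericity.  Applying Sapir's theorem~\cite{sapir} produces a finitely presented group $G_P$ with a finite aspherical presentation into which $H_P$ embeds; the presentation 2-complex is the desired finite 2-dimensional Eilenberg--Mac~Lane space.

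The main obstacle is the asphericity step: one has to confirm that the graphical small cancellation machinery of Section~\ref{sec:background} applies in isolation to the polygon subgroup presentation (where the only hypotheses on $P$ are $l \geq 13$, not the full spectacular axioms) and that the graphical 2-complex genuinely agrees (up to homotopy) with the standard presentation 2-complex needed for Sapir's theorem.  Once this is established, the remainder of the argument is a direct appeal to standard results.
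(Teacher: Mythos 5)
Your proposal is correct and follows essentially the same route as the paper: the paper deduces (2)$\Rightarrow$(1) from the Dehn property together with the fact that finitely generated subgroups of finitely presented groups are recursively presented (the same argument underlying the $\calr$-invariant you cite), and proves (1)$\Rightarrow$(3) by observing that the $C'(1/6)$ condition makes the presentation 2-complex for $H_P$ aspherical and then applying Sapir's theorem, with (3)$\Rightarrow$(2) immediate. The only small point to tidy in your small cancellation check is that a piece of length $2|m|$ shared by the degree-$m$ and degree-$n$ relators must be compared with the \emph{shorter} cycle, i.e.\ one needs $2|m|<l|m|/6$, which is exactly where the hypothesis $l\geq 13$ enters.
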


\begin{proof}
  Since the perimeter of $P$ is at least 13, the defining presentation
  for $H_P$ satisfies the $C'(1/6)$ small cancellation condition.
  Given this, if $a_1,\ldots,a_l$ are the directed edges making up the
  bounding cycle of $P$, it follows from the Dehn property
  that $a_1^na_2^n\cdots a_l^n=1$ in $H_P$ if and only if $n\in Z\cup\{0\}$.
  Since any finitely generated subgroup of a finitely presented
  group is recursively presented, it follows that if 
  $H_P$ embeds into a finitely presented
  group then $Z$ must be recursively enumerable.

  In general, the $C'(1/6)$ condition implies that the presentation 2-complex
  for $H_P$ is aspherical.  If $Z$ is recursively enumerable, then $H_P$ is
  recursively presented and Sapir's
  theorem~\cite{sapir} implies that $H_P$ embeds in a group admitting a
  finite 2-dimensional Eilenberg--Mac~Lane space.  Such a group is
  a fortiori finitely presented.
\end{proof}

\section{The projective line and its symmetries}
\label{sec:pgltwo}

Our construction of a spectacular 2-complex will involve the combinatorics
of the 2-dimensional projective linear group over a finite field, viewed
as a group of permutations of the projective line.  For the benefit of
the reader we summarize those properties that we shall use.  Recall that
a permutation group on a set $X$ is said to be \emph{$k$-transitive} if
it acts transitively on the ordered $k$-tuples of elements of $X$, and 
\emph{strictly $k$-transitive} if in addition the stabilizer of an
ordered $k$-tuple is trivial.

\begin{theorem}\label{thm:threetrans}
  Let $F$ be any field and 
  let $G=PGL(2,F)$ be the $2$-dimensional projective general linear group
  over $F$.  The action of $G$ on the projective
  line over $F$ is strictly $3$-transitive.
\end{theorem}

\begin{proof}
  Let $\be_1,\be_2$ be the standard basis for the vector space $F^2$,
  viewed as a space of column vectors, and let
  \[M=
  \begin{pmatrix}
    a&b \\
    c&d
  \end{pmatrix},\quad ad\neq bc\]
  be an element of $GL(2,F)$.  Then $M\be_1=a\be_1+c\be_2$, and since $a,c$
  can be arbitrary provided that they are not both zero, $GL(2,F)$ acts
  transitively on the non-zero vectors.  The stabilizer of the line
  $\langle \be_1\rangle$ is the subgroup consisting of those $M$ with
  $c=0$.  For such $M$, $ad\neq 0$ and $b$ is arbitrary (independent of
  choice of $a,d$).  For such $M$, $M\be_2=b\be_1+d\be_2$, an arbitrary
  vector in $F^2-\langle \be_1\rangle$.  It follows that $GL(2,F)$ acts
  2-transitively on the non-zero vectors in $F^2$.  The intersection of
  the stabilizers of $\langle \be_1\rangle$ and $\langle \be_2\rangle$
  is those $M$ with $b=c=0$; this implies that $ad\neq 0$.  For such
  $M$, we have that $M(\be_1+\be_2)=a\be_1+d\be_2$, an arbitrary element
  of $F^2-(\langle \be_1\rangle \cup \langle \be_2\rangle)$.  Hence
  $GL(2,F)$ acts 3-transitively on the lines in $F^2$.  The vector
  $a\be_1+d\be_2$ is in the line $\langle \be_1+\be_2\rangle$ if and
  only if $a=d$.  Thus the intersection of the stabilizers (in $GL(2,F)$)
  of the lines $\langle \be_1\rangle$, $\langle \be_2\rangle$ and
  $\langle \be_1+\be_2\rangle$ is the scalar matrices $aI$.  Since these
  form the kernel of the map $GL(2,F)\rightarrow G$ it follows that $G$
  acts strictly 3-transitively as claimed.
  \end{proof}

  \begin{theorem}\label{thm:normalizers}
    Fix a prime power $q=p^k$, let $G=PGL(2,q)$ be the $2$-dimensional
    projective general linear group over the field with $q$ elements,
    and let $\pp^1(q)$ denote the projective line over the field of $q$
    elements.  
    The following statements hold:
    \begin{itemize} 
  \item{}
  The orders of elements of $G$ are $p$, the prime dividing $q$, and every
  factor of $q+\epsilon$ for $\epsilon=\pm 1$.

\item{}
  Let $g$ be an element of $G$ of order $d\geq 3$ with $d$ dividing
  $q+\epsilon=q\pm 1$.

  \begin{itemize}
  \item{}
    The centralizer of $g$ is cyclic of order $q+\epsilon$.

  \item{}
    The normalizer of the subgroup generated by $g$ is dihedral of
    order $2(q+\epsilon)$.

  \item{}
    In its permutation action on $\pp^1(q)$, $g$ fixes
    $1-\epsilon$ points and permutes all other points in
    $(q+\epsilon)/d$ cycles of length $d$.
  \end{itemize}

\item{}
  Any element of order $p$
  acts on $\pp^1(q)$ with one fixed point and $q/p$ cycles of length~$p$.

\item{}
  When $q$ is odd, there are two cycle types of elements of order two;
  for each $\epsilon =\pm 1$ there are elements of order two whose centralizers
  are dihedral of order $2(q+\epsilon)$.  These elements fix
  $1-\epsilon$ points of $\pp^1(q)$ and have $(q+\epsilon)/2$ cycles of
  length two.  
  
  \end{itemize}
\end{theorem}

  \begin{proof}
    We start with some general remarks concerning normalizers inside
    permutation groups.  
    If $g$ is an element of a permutation group $G$ on a set $X$, and
    $m$ is any integer, then the set of points fixed by $g^m$ will
    contain the set of points fixed by $g$.  If $g$ and $g^m$ generate
    the same cyclic subgroup of $G$, then there is an integer $m'$ so
    that $g=(g^m)^{m'}$, and so by symmetry $g$ and $g^m$ must fix the
    same points.  It follows that any $h\in G$ that normalizes the
    subgroup generated by $g$ must preserve the set of $g$-fixed points.

    Now consider the case of interest, when $G=PGL(2,q)$ acting as a group of
    permutations of $\pp^1(q)$.  
    By Theorem~\ref{thm:threetrans}, any non-identity element
    of $G$ fixes at most two points.  Lifting to the general linear group,
    it follows that any non-scalar matrix in $GL(2,q)$ can fix at most two
    lines in $\ff_q^2$.  

    There are three possible cases for the characteristic polynomial of an
    element of $GL(2,q)$: it may factor as the square of a linear polynomial
    $(t-\lambda)^2$, it may factor as the product of two distinct linear 
    polynomials $(t-\lambda)(t-\mu)$ or it may be a quadratic polynomial
    that has no roots in $\ff_q$.  Any eigenvalues (i.e., roots of the
    characteristic polynomial) must be non-zero.  

    A matrix whose characteristic polynomial is $(t-\lambda)^2$ is
    either a scalar matrix $\diag(\lambda,\lambda)$ or it is conjugate
    to an upper triangular matrix with both diagonal entries equal
    to~$\lambda$.  A scalar matrix is in the kernel of the map
    $GL(2,q)\rightarrow G$.  The set of all upper triangular matrices
    with 1's on their diagonal forms a subgroup of $GL(2,q)$
    isomorphic to the additive group of $\ff_q$; in particular each of
    these except the identity matrix has order $p$, and maps to a
    non-identity element of $G$ which must also have order $p$.  An
    upper triangular matrix with both diagonal entries equal to
    $\lambda$ is the product of a matrix as considered above with a
    scalar matrix.  Hence its image in $G$ is also an element of
    order~$p$.  To see that the cycle type of an element of order $p$
    acting on $\pp^1(q)$ is as claimed, note that $|\pp^1(q)|=q+1$ is
    congruent to 1 modulo~$p$, and so the number of fixed points under
    the action of an element of order $p$ must also be congruent to 1
    modulo~$p$ and cannot be larger than~2 by strict 3-transitivity.

    A matrix whose characteristic polynomial is $(t-\lambda)(t-\mu)$
    for $\lambda\neq \mu$ is conjugate to the diagonal matrix
    $\diag(\lambda,\mu)$ via the change of basis that sends $\be_1$ to
    an eigenvector for $\lambda$ and $\be_2$ to an eigenvector for
    $\mu$.  Let $g$ be the image of $\diag(\lambda,\mu)$ inside $G$.
    Since $\lambda\neq \mu$, $g$ is not the identity element and so
    $g$ fixes just two points of $\pp^1(q)$: the lines
    $\langle \be_1\rangle$ and $\langle \be_2\rangle$.  Any element
    of $G$ that normalizes the subgroup generated by $g$ must
    either fix or exchange these two lines.  The matrices that preserve
    the lines $\langle \be_1\rangle$ and $\langle \be_2\rangle$ are
    precisely the diagonal matrices, each of which centralizes
    $\diag(\lambda,\mu)$, and the matrices that exchange these
    two lines are precisely the antidiagonal matrices.  Each
    antidiagonal matrix conjugates $\diag(\lambda,\mu)$ to
    $\diag(\mu,\lambda)$.  Since the product of $\diag(\lambda,\mu)$
    and $\diag(\mu,\lambda)$ is a scalar matrix, any $h\in G$ that
    exchanges the two lines must conjugate $g$ to $g^{-1}$.
    Thus the normalizer in $G$ of the subgroup generated by $g$ is
    the image in $G$ of the group of diagonal and antidiagonal
    matrices in $GL(2,q)$.  This is a dihedral group of order
    $2(q-1)$.  The cyclic subgroup of order $q-1$ fixes the same
    two points of $\pp^1(q)$ as $g$ and centralizes $g$.
    The other elements of this dihedral group swap the two points
    fixed by $g$ and send $g$ to its inverse.
    
    In this case, it remains only to consider the cycle type for the
    action of $g$.  Since the pointwise stabilizer in $G$ of the two
    points fixed by $g$ is a cyclic group of order $q-1$ containing
    $g$, it follows that $g$ is a power of an element of this order.  
    If $g$ has order $d>1$ then it is the $(q-1)/d$th power of some
    element of order $q-1$. To determine the cycle type of $g$  it
    suffices to show that this element of order $q-1$
    acts on $\pp^1(q)$ via a single $(q-1)$-cycle.  
    A matrix in $GL(2,q)$ that maps to this element is
    conjugate to $\diag(a,d)$ where $a/d$ is a
    generator for the multiplicative group of $\ff_q$.  For
    $0\leq k\leq q-2$ the vectors $a^k\be_1+d^k\be_2$
    all lie in distinct lines, showing that such an element 
    acts as a $(q-1)$-cycle.

    An irreducible quadratic polynomial over $\ff_q$ has both of its
    roots in the field $\ff_{q^2}$ of $q^2$ elements.  We may view
    $GL(2,q)$ as a subgroup of $GL(2,q^2)$, and we may view $G$ as a
    subgroup of $PGL(2,q^2)$.  If $M$ is a matrix in $GL(2,q)$ whose
    characteristic polynomial is irreducible over $\ff_q$, then over
    $\ff_{q^2}$ we see that $M$ is conjugate to a diagonal matrix
    $\diag(\lambda,\mu)$ for some $\mu\neq \lambda\in \ff_{q^2}$.  The
    study of the properties of diagonalizable matrices in the previous
    two paragraphs therefore applies to $M$ as an element of
    $GL(2,q^2)$.  If $g$ is the image of the matrix $M$ in
    $PGL(2,q^2)$, then we see that the normalizer in $PGL(2,q^2)$ of
    this element is dihedral of order $2(q^2-1)$, and that the
    elements of this subgroup that fix the two points of $\pp^1(q^2)$
    that are fixed by $g$ form a cyclic subgroup of order $q^2-1$,
    while the elements that swap these two points conjugate $g$ to
    $g^{-1}$.  Arguing as in the previous paragraph, we see that a
    generator for this cyclic group acts as a single $(q^2-1)$-cycle
    on the complement of the two fixed points.  Hence the points of
    $\pp^1(q^2)$ that are not fixed by $g$ lie in $g$-orbits of length
    equal to the order of $g$.  The action of $g$ on $\pp^1(q^2)$
    preserves the subset $\pp^1(q)\subseteq \pp^1(q^2)$.  Since the
    two points of $\pp^1(q^2)$ that are fixed by $g$ are not contained
    in $\pp^1(q)$, the orbits for $g$ on $\pp^1(q)$ are all of length
    equal to the order of $g$.  Any matrix in $GL(2,q)$ that fixes the
    same two points of $\pp^1(q^2)$ as $g$ must also have an
    irreducible quadratic as its characteristic polynomial (since its
    eigenvectors in $\ff_{q^2}^2$ do not lie in $\ff_q^2$).  

    The roots of a quadratic that is irreducible over $\ff_q$ lie in a
    single orbit for the Galois group of $\ff_{q^2}$ over $\ff_q$.
    This Galois group is a group of order two generated by the
    Frobenius map $\lambda\mapsto \lambda^q$.  It follows that any
    matrix in $GL(2,q)$ whose characteristic polynomial is an
    irreducible quadratic is conjugate over $\ff_{q^2}$ to a matrix of
    the form $\diag(\lambda,\lambda^q)$ for some $\lambda\in
    \ff_{q^2}-\ff_q$.  Since $\lambda^{q^2}=\lambda$, the $(q+1)$st
    power of $\diag(\lambda,\lambda^q)$ is the scalar matrix
    $\diag(\lambda^{q+1},\lambda^{q+1})$.  Thus any element of
    $GL(2,q)$ whose characteristic polynomial is an irreducible
    quadratic gives rise to an element of $G=PGL(2,q)$ of order
    dividing $q+1$.  Combining this information with that given in
    the previous paragraph, we see that if $g$ is an element of
    $PGL(2,q)$ that fixes no point of $\pp^1(q)$, then the set of
    all elements of $PGL(2,q)$ that fix the same points in $\pp^1(q^2)$
    as $g$ is a cyclic group of order dividing $q+1$, and that this
    cyclic group has index at most two in the set of all elements of
    $PGL(2,q)$ that fix this \emph{set} of two points.  Furthermore,
    if $g$ has order $d$, then $g$ acts on $\pp^1(q)$ as $(q+1)/d$
    disjoint $d$-cycles.  

    So far we have an upper bound for the normalizer of a group
    element $g$ corresponding to a matrix $M$ whose characteristic
    polynomial is an irreducible quadratic, but we also need a lower
    bound.  I.e., we need to construct a cyclic group of order $q+1$
    that centralizes $g$ and a dihedral group of order $2(q+1)$ that
    normalizes the subgroup generated by $g$.  Let $f(x)\in \ff_q[x]$
    be the characteristic polynomial of such an $M$.  Define a (unital)
    ring homomorphism $\widetilde{\psi}_M:\ff_q[x]\rightarrow
    M_2(\ff_q)$ by $\widetilde{\psi}_M(x)=M$ and extending
    $\ff_q$-linearly.  By definition, the kernel of
    $\widetilde{\psi}_M$ is the ideal of $\ff_q[x]$ generated by the
    minimal polynomial of~$M$.  Since the characteristic polynomial
    $f(x)$ is an irreducible quadratic,
    it is equal to the minimal polynomial of $M$ and  
    the factor ring $E=\ff_q[x]/(f(x))$ is isomorphic to the field
    $\ff_{q^2}$.  Moreover, $\widetilde{\psi}_M$ induces an injective
    $\ff_q$-linear homomorphism $\psi_M:E\rightarrow M_2(\ff_q)$.
    Fix a non-zero vector $\bv$ in $\ff_q^2$ and define an $\ff_q$-vector space
    homomorphism $\theta_\bv:E\rightarrow \ff_q^2$ by
    $\theta_\bv(1)=\bv$ and $\theta_\bv(x)=M\bv$ where we have abused
    notation slightly by writing $x$ for the image of $x$ inside $E$.
    Since $E$ is being used as both a module and as a ring, we use
    the notation $E^1$ for $E$ viewed as a 1-dimensional vector space
    over $E$ and $M_1(E)$ for $E$ viewed as the endomorphism ring of $E^1$.  
    Since $M$ has no eigenvalues in $\ff_q$, $\bv$~and~$M\bv$ are linearly
    independent and so the map $\theta_\bv:E^1\rightarrow \ff_q^2$ is
    an isomorphism of vector spaces over $\ff_q$.   Moreover it is compatible
    with $\psi_M:M_1(E)\rightarrow M_2(\ff_q)$ in the sense that for
    all $\lambda,\mu\in E$, $\theta_\bv(\lambda\mu)=\psi_M(\lambda)
    \theta_\bv(\mu)$.  In particular,
    specializing to the case $\lambda=x$ gives that 
    $\theta_\bv(x\mu)=M\theta_\bv(\mu)$ for any $\mu$, and 
    so $\theta_\bv x\theta_\bv^{-1} = M$.  To avoid any confusion, we
    emphasize that this equation represents an identity between $\ff_q$-linear
    automorphisms of $\ff_q^2$, and that the symbol $x$ in this equation
    represents the map
    $\lambda\mapsto x\lambda$ for all $\lambda\in E^1$ which is $E$-linear
    and hence a fortiori $\ff_q$-linear.  

    The multiplicative group $GL(1,E)\leq M_1(E)$ is a cyclic group of
    order $q^2-1$ containing~$x$.  The Galois group of $E$ as an
    extension of $\ff_q$ is a cyclic group of order two, generated by
    the Frobenius map $\phi_q:\lambda\mapsto \lambda^q$.  This map is
    an $\ff_q$-linear automorphism of $E$ of order two which is not
    $E$-linear, so it normalizes $GL(1,E)$ but is not contained in
    $GL(1,E)$.  Since $\phi_q(\lambda\mu)=\lambda^q\mu^q$, it follows
    that conjugation by $\phi_q$ acts on $GL(1,E)$ as the group
    automorphism $\lambda\mapsto \lambda^q$.  Let $H$ be the group of
    $\ff_q$-linear automorphisms of $E^1$ generated by $GL(1,E)$ and
    $\phi_q$.  Thus $H$ is a group of $\ff_q$-linear automorphisms of
    $E$ that has order $2(q^2-1)$, that normalizes the subgroup
    generated by $x$, and contains an index two cyclic subgroup
    $GL(1,E)$ that contains $x$.  Now $\theta_\bv:E^1\rightarrow
    \ff_q^2$ is an isomorphism of $\ff_q$-vector spaces such that
    $\theta_\bv x \theta_\bv^{-1}=M\in M_2(\ff_q)$.  Hence the group
    $H'=\theta_\bv H\theta_\bv^{-1}\leq GL(2,q)$ normalizes the subgroup
    generated by the matrix
    $M$ and $M$ is contained in a cyclic subgroup $C$ of $H'$ of order
    $q^2-1$.  Since $M$ is a power of a generator for $C$ we see that
    each generator of $C$ has the same fixed
    points in $\pp^1(q^2)$ as $M$.  These points do not lie in $\pp^1(q)$
    and so each generator of $C$ is irreducible over $\ff_q$.  It follows
    by an argument given earlier that the $(q+1)$st power of any generator 
    of $C$ is a scalar matrix and so lies in the kernel of the map
    $GL(2,q)\rightarrow G$.  Hence the image of $C$ in $G$ is cyclic of
    order at most $q+1$.  Since this kernel of the map $GL(2,q)\rightarrow G$
    is cyclic of order $q-1$, the image of $C$ is cyclic of order 
    exactly $q+1$ and the image
    of $H'$ in $G$ is a dihedral group of order $2(q+1)$, with
    $g$ (the image of $M$) contained in a cyclic subgroup of order
    $q+1$ (the image of $C$).
    
    The statements concerning an element $g$ of order two follow from the
    above cases.  In particular, the case when $q=2^k$ is even corresponds
    to an element $g$ fixing one point of $\pp^1(q)$.  When $q$ is odd,
    the element $g$ must fix either 0~or~2 points of $\pp^1(q)$, with
    the other points permuted in 2-cycles.  Write $1-\epsilon$ with
    $\epsilon=\pm 1$ for the number of fixed points for $g$.  In each
    case, $g$ is a power of an element of order $q+\epsilon$, $g$ is
    centralized (or equivalently normalized) by a dihedral subgroup
    of order $2(q+\epsilon)$, and the cycle type of $g$ is as claimed.

    \end{proof}

\section{A spectacular 2-complex} 
\label{sec:complex}

We construct a 2-complex $K$ having the required properties 
in stages.  Initially we ignore the requirements of large
girth (or rotundity) and 
acyclicity and construct a 2-complex $K_1$ with perfect fundamental
group and the required small cancellation property.  By passing 
to a subcomplex we obtain an acyclic subcomplex $K_2$.  Finally, by
subdividing the 1-skeleton of $K_2$, with a corresponding increase 
in the number of sides of each polygon, we obtain $K$.

Fix a prime power $q$, and fix $d\geq 3$ so that $d$ divides
$q+\epsilon$ for some $\epsilon\in\{\pm 1\}$.  Now let $G=PGL(2,q)$ be
the 2-dimensional projective general linear group over the field with
$q$ elements.  There is a natural action of $G$ on the projective
line, a set of $q+1$ points.  The 1-skeleton $K_1^1$ is the complete
graph with vertex set the projective line.  By construction $G$ acts
on $K_1^1$, and the action is triply-transitive on the vertex set
$K_1^0$.  Note also that $|G|=q(q^2-1)$ is equal to the number of
ordered triples of distinct elements of $K_1^0$.

Let $g$ be an element of $G$ of order $d$.  Our complex will 
depend on the pair $(d,q)$ and on the conjugacy class of the
element $g$.  By Theorem~\ref{thm:normalizers}, the centralizer in 
$G$ of $g$ is cyclic of order $q+\epsilon$, and the normalizer
of the subgroup generated by $g$ is dihedral of order $2(q+\epsilon)$.
It follows that 
the conjugacy class of $g$ contains $q(q-\epsilon)$ elements and
that it is closed under taking inverses.  The element $g$ is 
a power of an element of order $q+\epsilon$. 
From this it follows that the permutation action of $g$ on $K_1^0$ has 
$1-\epsilon$ fixed points and $(q+\epsilon)/d$ cycles of 
length $d$.  We use each of these cycles of length~$d$ to
describe the attaching map for a $d$-gon.  If $x$ is an element
of the conjugacy class of $g$ and the vertices
$v_0,\ldots,v_{d-1},v_d$ are such that $v_i=x^i(v_0)$, then there
is a $d$-gon whose boundary is the edge loop of length~$d$
in $K_1^1$ consisting of the edges
$(v_0,v_1),(v_1,v_2),\ldots,(v_{d-1},v_d)$.  This edge loop 
passes through the vertices $v_0,\ldots,v_{d-1}$ consecutively.
Note that the vertex orbits for $x^{-1}$ give rise to the same
polygons as the vertex orbits for $x$, with the opposite
orientation.   Each pair of the form $x,x^{-1}$ of elements 
of the conjugacy class of $g$ gives a recipe for attaching 
$(q+\epsilon)/d$ distinct $d$-sided polygons to $K_1^1$.  
The complex $K_1$ is thus defined by attaching $|G|/2d$ distinct
$d$-gons to $K_1^1$.  

Complexes of this form in the case $d=q+1$ were studied by
Aschbacher and Segev in~\cite{aschseg}, although the complex
that they describe is actually the conical subdivision of our
complex, in which each $d$-gon is replaced by the cone consisting
of $d$ triangles.  We encourage the
reader to experiment with these complexes for small values of~$q$.  
There are four such complexes with $q\leq 4$: when $(d,q)=(3,2)$
the complex is a triangle; when $(d,q)=(4,3)$ the complex is a
projective plane made from three squares that can be understood
as the quotient of the boundary of a cube by its antipodal map;
when $(d,q)=(3,4)$ the complex is the 2-skeleton of a 4-simplex;
when $(d,q)=(5,4)$ the complex is the 2-skeleton 
of the Poincar\'e homology sphere~\cite[pp.~34--36]{thurston},
which is an acyclic space with fundamental group of order~120.

\begin{proposition}\label{prop:smallpieces} 
Each piece of the intersection of two polygons of $K_1$ contains 
at most one edge. 
\end{proposition}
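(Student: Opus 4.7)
The plan is to argue by contradiction. Suppose two distinct polygons $P$ and $Q$ of $K_1$ share a piece consisting of two consecutive edges, so that there exist three vertices $v_0, v_1, v_2$ occurring consecutively in both $\partial P$ and $\partial Q$. These three vertices are distinct because they are three consecutive vertices of a $d$-cycle of an element of order $d \geq 3$. Since the polygon arising from a conjugate $x$ of $g$ coincides with that arising from $x^{-1}$, we may choose conjugates $x$ and $y$ of $g$ defining $P$ and $Q$ respectively so that $x(v_0) = y(v_0) = v_1$ and $x(v_1) = y(v_1) = v_2$. The goal is to deduce $x = y$, whence $P = Q$, contradicting the assumption.

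To show $x = y$, I would use the classification of $PGL(2,q)$-conjugacy classes by matrix invariants. Choose projective coordinates on $\mathbb{P}^1(\mathbb{F}_q)$ with $v_0 = 0$ and $v_1 = \infty$. Any element of $PGL(2,q)$ sending $0 \mapsto \infty$ and $\infty \mapsto v_2$ has the form $z \mapsto v_2 + \beta/z$ for some $\beta \in \mathbb{F}_q^{\times}$, represented by a matrix with trace $v_2$ and determinant $-\beta$. Since $v_2 \neq v_0 = 0$, the trace is nonzero, so the $PGL(2,q)$-conjugacy class of such an element is determined by the invariant $\det/(\mathrm{tr})^2 = -\beta/v_2^2$, which in turn pins $\beta$ down uniquely. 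Because $x$ and $y$ both lie in the conjugacy class of $g$, their values of $\beta$ agree, hence $x = y$.

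The main technical obstacle is the invariant-theoretic verification that $PGL(2,q)$-conjugacy classes of non-central elements of nonzero trace are classified by $\det/(\mathrm{tr})^2$. This reduces to the observation that $PGL$-conjugacy is $GL$-conjugacy up to scaling, combined with the fact that scaling a $2 \times 2$ matrix by $\lambda$ multiplies the trace by $\lambda$ and the determinant by $\lambda^2$, so that the ratio $\det/(\mathrm{tr})^2$ is preserved. Once this is in place, the parameterization above gives the stronger statement that each ordered pair of consecutive edges in $K_1^1$ is traversed by the boundary of at most one polygon of $K_1$, which is what is needed.
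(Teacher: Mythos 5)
Your proof is correct, but it takes a genuinely different route from the paper. The paper's argument is a double count: it considers the incidence set $\Pi$ of pairs (polygon, ordered triple of consecutive vertices contained in it), notes that each $d$-gon contains $2d$ triples and that there are $q(q^2-1)/2d$ polygons, so $|\Pi|=q(q^2-1)$, which is exactly the number of ordered triples of distinct vertices; since $G=PGL(2,q)$ acts simply transitively on such triples and acts on $\Pi$, every triple lies in the same number of polygons, hence in exactly one. Your argument instead fixes coordinates with $v_0=0$, $v_1=\infty$, writes any element sending $0\mapsto\infty\mapsto v_2$ as the class of $\left(\begin{smallmatrix} v_2 & \beta\\ 1 & 0\end{smallmatrix}\right)$, and uses invariance of $\det/(\mathrm{tr})^2$ under $PGL$-conjugacy (legitimate since $v_2\neq 0$, as $v_0,v_1,v_2$ are distinct) to pin down $\beta$, so that an element of the fixed conjugacy class is determined by the images of $v_0$ and $v_1$; since the class is inverse-closed and each pair $\{x,x^{-1}\}$ and cycle carries a unique polygon, $x=y$ forces $P=Q$. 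Both proofs ultimately exploit sharp $3$-transitivity of $PGL(2,q)$ on the projective line, but yours does so via an explicit local computation yielding uniqueness, while the paper's counting argument is coordinate-free and gives the slightly stronger conclusion that every triple lies in \emph{exactly} one polygon. One simplification for you: the ``main technical obstacle'' you flag is not needed in full — you never use that $\det/(\mathrm{tr})^2$ \emph{classifies} classes (the converse direction, which would require rational canonical form), only that it is \emph{invariant} under conjugation and scaling, which is the elementary computation you already give.
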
 

\begin{proof}
  Throughout this proof, the term \emph{triple} will mean an ordered
  triple of pairwise distinct vertices of $K_1$.  Say that a triple
  $(u,v,w)$ is \emph{contained} in a polygon $P$ if $v$ is a vertex
  of $P$ and the edges $\{u,v\}$ and $\{v,w\}$ are both contained in $P$. 
  Clearly each $d$-gon contains exactly $2d$ such triples.  
  It will suffice to show that no triple is contained
  in more than one polygon of $K_1$, because the vertices 
  contained in a piece of length~2 would form such a triple.  

  To do this, consider the set $\Pi$ of pairs consisting of a
  polygon $P$ of $K_1$ and a triple contained in the polygon $P$.
  We shall count the number of elements in $\Pi$ in two different
  ways, firstly by summing over the polygons and secondly by summing
  over the triples.  Before starting, note that 
  the group $G=PGL(2,q)$ acts on $K_1$ and hence also acts on $\Pi$.
  Since each polygon contains exactly $2d$ triples and there
  are $q(q^2-1)/2d$ polygons in $K_1$, we see that $|\Pi|=q(q^2-1)$,
  and so $\Pi$, the set of triples and $G$ all have the same
  number of elements.  Since the action of $G$ on the set of
  triples is free and transitive, it follows that each triple
  must be contained in the same number of polygons.  Hence
  $|\Pi|$ is equal to the number of triples times the number
  of polygons containing any given fixed triple.  Since $|\Pi|$
  is equal to the number of triples, it follows that 
  each triple is contained in exactly one polygon.  In
  particular, no triple is contained in more than one polygon.  
\end{proof} 

In the case when $d=q+1$, the Euler characteristic of $K_1$ is equal
to~1.  Aschbacher and Segev showed that many of
these complexes are rationally acyclic~\cite{aschseg}.  The only
Aschbacher--Segev complexes that are known to be (integrally) acyclic
are the two cases already mentioned above: $(d,q)=(3,2)$ and
$(d,q)=(5,4)$, i.e., the triangle and the 2-skeleton of the Poincar\'e
homology sphere.  We cannot use these complexes because we need $d\geq
7$ together with Proposition~\ref{prop:smallpieces} 
to ensure the $C'(1/6)$ condition
(condition~5 in Definition~\ref{defn:kdefn}).  

For $d< q+1$, the complex $K_1$ 
has too many 2-cells to be acyclic, but frequently $H_1(K_1)$ is 
trivial.  Consider the case $(d,q)=(7,8)$.  In this case $K_1$ 
has~36 2-cells, and a calculation shows that $H_1(K_1)=0$ and 
that $H_2(K_1)$ is free abelian of rank~8.  (There are three 
conjugacy classes of elements of order~7 in $G=PGL(2,8)$, but 
the action of the Galois group of the field of 8 elements 
by outer automorphisms of $G$ permutes the three classes, 
so there is only one isomorphism type of complex $K_1$.)  
For this $K_1$ there 
are many ways to remove eight 2-cells to leave an acyclic 
2-complex: randomly removing 2-cells to reduce the rank of 
$H_2$ often finds such a complex.  The simplest way to produce
such a complex that we have found is to fix one of the vertices
$v_0$ of $K_1$, and to discard the eight 2-cells that are \emph{not} 
incident on $v_0$.  

This 2-complex $K_2$ has all of the required properties
except that its polygons have too few sides and its girth is~3;
by subdividing each edge into five, 
one obtains a spectacular complex $K$ with girth~15 whose 28 
2-cells are 35-gons.

\section{Closing remarks} 
\label{sec:ques}

The groups $G(S)$ for $S\neq \emptyset$ are known to have
cohomological dimension~2, but for most of them we have been
unable to construct a 2-dimensional Eilenberg--Mac~Lane space.

In contrast to the Bestvina--Brady construction, graphical small
cancellation is purely 2-dimensional, and so our methods cannot
be used to construct (for example) groups that are finitely presented
but not of type $F$.

We have been unable to find a spectacular complex $K$ admitting an
action of a non-trivial finite group $Q$ so that the fixed point
set $K^Q$ is empty.  Such a pair might give an alternative
construction for the main examples in~\cite{vfg}.  

It is easy to see that the groups $H(S)$ fall into uncountably
many quasi-isometry classes using Bowditch's argument~\cite{bhb}.
It is less clear what happens with the groups $G(S)$, except in
the case when $G(S)$ isomorphic to a generalized Bestvina--Brady
group $G_L(S)$, which was covered in~\cite{kls}.  Since this
article was first submitted, the first named author has resolved
the case when $Z=\{k^n\,\,:\,\,n\geq 0\}$
and $G_P$ is as described in Proposition~\ref{prop:kpowers},
in~\cite{brownphd}.  The question of whether the groups $G(S)$
form uncountably many quasi-isomorphism classes for arbitrary
recursively enumerable $Z\subseteq \zz$ and arbitrary choices
of embeddings $H_p\rightarrow G_P$ remains open in full
generality.  The methods introduced in~\cite{mow} may prove
useful in resolving this but they  
do not seem to apply directly.  Fix a finite generating
set for $G(\emptyset)$, and use the image of this set under
the surjective homomorphism $G(\emptyset)\rightarrow G(S)$
as a set of generators for $G(S)$.  In the language 
of~\cite{mow}, the map $\psi:S\mapsto G(S)$ defines an injective
function from the set of subsets of $Z$ to the space of marked
groups.  To apply the main theorem
of~\cite{mow} it would suffice to show that the image of $\psi$
is closed, which in turn would follow if one could show that $\psi$
is continuous for the Tychonoff (or product) topology on the
powerset of $Z$.  

Condition~7 in the definition of a spectacular complex may be weakened,
to give a weaker conclusion.  For example if $K$ is only assumed to
be 1-acyclic (or equivalently to have perfect fundamental group),
then each $G(S)$ will be of type $FP_2$.  Similarly, if $K$ is only
assumed to be rationally acyclic, then each $G(S)$ will be of type
$FP(\qq)$.  Of course in these cases one could also weaken the
hypotheses on $G_P$.

It is hard to see how conditions 1--6 of Definition~\ref{defn:kdefn}
could be weakened.  Condition~3 is used only in the proof of
Proposition~\ref{prop:hpinjects}, and could possibly be weakened at
the expense of a more complicated proof for this proposition.  
Condition~5 is used only to establish
that the kernel $K_{S,T}$ is non-trivial, and could be replaced by the
more complicated condition: `there is a closed path in $K^1$ whose
intersection with the boundary of each polygon of $K$ is less than
half the length of that polygon' without changing the proof.  Note
that a single 13-gon satisfies all of the conditions except
Condition~5, so this condition cannot be omitted altogether.

The constants in the definition of a spectacular complex are chosen
to ensure the $C'(1/6)$ graphical small cancellation condition in
the associated group presentations.  We
have been unable to find analogous complexes that give rise to
presentations satisfying the $C'(1/n)$ condition for arbitrarily
large $n$.  The technique used in Section~\ref{sec:complex}
will produce suitable analogues of $K_1$, provided that $d$
is chosen to be at least $n+1$.  However, for large values of $d$
we have been unable to find a set of polygons to remove to
leave an acyclic subcomplex.

\

\leftline{\bf Authors' addresses:}

\obeylines

\smallskip
{\tt i.j.leary@soton.ac.uk} \qquad {\tt t.brown@soton.ac.uk} 

\smallskip
CGTA, 
School of Mathematical Sciences, 
University of Southampton, 
Southampton,
SO17 1BJ


\begin{thebibliography}{19} 

\bibitem{aschseg} M. Aschbacher and Y. Segev, \emph{A fixed point theorem
for groups acting on finite $2$-dimensional acyclic simplicial complexes}, 
Proc. London Math. Soc. \textbf{67} (1993) 329--354.  

\bibitem{bb} M. Bestvina and N. Brady, \emph{Morse theory and
  finiteness properties of groups}, Invent. Math. \textbf{129} (1997)
  445--470. 

\bibitem{bie} R. Bieri, \emph{Homological dimension of discrete
  groups}, second edition, Queen Mary College Mathematical Notes, 
Queen Mary College, Department of Pure Mathematics, London (1981).  

\bibitem{bhb} B. H. Bowditch, \emph{Continuously many quasi-isometry
  classes of $2$-generator groups}, Comment. Math. Helv. \textbf{73}
  (1998) 232--236.  

\bibitem{brady} N. Brady, \emph{Branched coverings of cubical
  complexes and subgroups of hyperbolic groups}, J. London
  Math. Soc. (2) \textbf{60} (1999) 461--480.  

\bibitem{brihae} M. R. Bridson and A. Haefliger, \emph{Metric spaces of
non-positive curvature}, Grundlehren der mathematischen Wissenschaften
  \textbf{319}, Springer-Verlag, New York-Berlin (1999).  
  
\bibitem{brown} K. S. Brown, \emph{Finiteness properties of groups}, 
J. Pure Appl. Algebra \textbf{44} (1987), 45--75.  

\bibitem{brobook} K. S. Brown, \emph{Cohomology of Groups}, 
Graduate Texts in Mathematics \textbf{87}, Springer-Verlag, 
New York-Berlin (1982).  

\bibitem{brownphd} T. Brown, \emph{Uncountably many quasi-isometry classes
  of groups of type $FP$ via graphical small cancellation theory}, PhD thesis,
  University of Southampton (2021).  

\bibitem{etc} K.-U. Bux, M. G. Fluch, M. Marschler, S. Witzel and
  M. C. B. Zaremsky, \emph{The braided Thompson's groups
    are $F_\infty$.  With an appendix by Zaremsky}, J. reine
  angew. Math. \textbf{718} (2016) 59--101.  


\bibitem{buxgon} K.-U. Bux and C. Gonzalez, \emph{The Bestvina--Brady
construction revisited: geometric computation of $\Sigma$-invariants 
for right-angled Artin groups}, J. London Math. Soc. (2) \textbf{60}
(1999) 793--801.  

\bibitem{buildings} K.-U. Bux, R. K\"ohl and S. Witzel, \emph{Higher 
finiteness properties of reductive arithmetic groups in positive 
characteristic: the Rank Theorem}, Ann. Math. \textbf{177} (2013) 
311--366.  


\bibitem{davpd} M. W. Davis, \emph{The cohomology of a Coxeter group
  with group ring coefficients}, Duke Math. J. \textbf{91} (1998) 
297--314.  

\bibitem{davbook} M. W. Davis, \emph{The geometry and topology of 
Coxeter groups}, London Mathematical Society Monographs Series
\textbf{32} Princeton Univ. Press, Princeton, NJ (2008).  

\bibitem{dicksleary} W. Dicks and I. J. Leary, \emph{Presentations
for subgroups of Artin groups}, Proc. Amer. Math. Soc. \textbf{127} 
(1999) 343--348.  

\bibitem{dror} E. Dror Farjoun, \emph{Fundamental group of homotopy
colimits}, Adv. Math. \textbf{182} (2004) 1--27. 
  
\bibitem{gromov} M. Gromov, \emph{Random walks in random groups}, 
  Geom. Funct. Anal. \textbf{13} (2003) 73--146.

\bibitem{gruber} D. Gruber, \emph{Groups with graphical $C(6)$ and
  $C(7)$ small cancellation presentations}, Trans. Amer. Math.
  Soc. \textbf{367} (2015), 2051--2078.  
  
\bibitem{hatcher} A. Hatcher, \emph{Algebraic Topology}, Cambridge University
  Press, Cambridge (2002).  

\bibitem{hig} G. Higman, \emph{Subgroups of finitely presented groups}, 
Proc. Roy. Soc. London Ser. A \textbf{262} (1961) 455--475.  

\bibitem{kimroush} K. H. Kim and F. W. Roush, \emph{Homology of certain
algebras defined by graphs}, J. Pure and Appl. Alg. \textbf{17} (1980)
  179--186.  

\bibitem{kls} R. P. Kropholler, I. J. Leary and I. Soroko,
  \emph{Uncountably many quasi-isometry classes of groups of type $FP$},
  Amer. J. Math. \textbf{142} (2020) 1931--1944.  

\bibitem{fpg} I. J. Leary, \emph{Uncountably many groups of type
  $FP$}, Proc. London Math. Soc. (3) \textbf{117} (2018) 246--276.  

\bibitem{sgfp2} I. J. Leary, \emph{Subgroups of almost finitely 
presented groups}, Math. Ann. \textbf{372} (2018) 1383--1391.  

\bibitem{vfg} I. J. Leary and B. E. A. Nucinkis, \emph{Some groups
of type $VF$}, Invent. Math. \textbf{151} (2003) 135--165.  

\bibitem{ijlrs} I. J. Leary and R. Stancu, \emph{Realising fusion
systems}, Algebra Number Theory \textbf{1} (2007) 17--34.  
  
\bibitem{lynsch} R. C. Lyndon and P. E. Schupp, \emph{Combinatorial
group theory}, Classics in Mathematics, Springer-Verlag, Berlin (2001).  

\bibitem{mow} A. Minasyan, D. Osin and S. Witzel, \emph{Quasi-isometric
  diversity of marked groups}, J. Topol. \textbf{14} (2021) 488--503.  
  
\bibitem{ollivier} Y. Ollivier, \emph{On a small cancellation 
theorem of Gromov}, Bull. Belg. Math. Soc. Simon Stevin \textbf{13} 
(2006) 75--89.  

\bibitem{sapir} M. Sapir, \emph{A Higman embedding preserving
asphericity}, J. Amer. Math. Soc. \textbf{27} (2014) 1--42.  

\bibitem{scottwall} P. Scott and C. T. C. Wall, \emph{Topological methods
  in group theory}, Homological Group Theory 137--203, London Math. Soc.
  Lecture Notes \textbf{36} Cambridge University Press, Cambridge (1979).  
  
\bibitem{sta} J. Stallings, \emph{A finitely presented group whose
$3$-dimensional integral homology is not finitely generated}, 
Amer. J. Math. \textbf{85} (1963) 541--543.  

\bibitem{swz} R. Skipper, S. Witzel and M. C. B. Zaremsky, \emph{Simple
groups separated by finiteness properties}, Invent. Math. \textbf{215} 
(2019) 713--740.  

\bibitem{thurston} W. P. Thurston, \emph{Three-dimensional geometry
  and topology. {V}ol. 1}, Princeton Mathematical Series \textbf{35},
  Princeton University Press, Princeton NJ (1997).  
  
\end{thebibliography}
\end{document}